\newtheorem{thm}{Theorem}
\newtheorem{cor}{Corollary} 
\newtheorem{lem}{Lemma} 
\newtheorem{dfn}{Definition}
\newtheorem{prop}{Proposition}
 \def\E{\mathds E}
 \def\P{\mathds P}
\journal{Applied and Computational Harmonic Analysis}
\begin{document}

\begin{frontmatter}



\title{{\bf Anisotropic Nonlocal Means Denoising}}


%
%
%
%

\author{Arian Maleki\corref{cor1}} \ead{arian.maleki@rice.edu} \ead[url]{http://www.ece.rice.edu/~mam15/}
\author{Manjari Narayan} \ead{manjari@rice.edu} \ead[url]{http://www.ece.rice.edu/~mn4/}
\author{Richard G.\ Baraniuk\corref{cor2}\fnref{fn3}} \ead{richb@rice.edu} \ead[url]{http://web.ece.rice.edu/richb/}

\address{Dept.~of Computer and Electrical Engineering, Rice University, MS-380 \\ 6100 Main Street, Houston, TX 77005, USA \\ }

\cortext[cor1]{Corresponding author}
\cortext[cor2]{Principal corresponding author}

\fntext[fn3]{Phone: +1 713.348.5132; Fax: +1 713.348.5685}

\begin{abstract}
It has recently been proved that the popular nonlocal means (NLM) denoising algorithm does not optimally denoise images with sharp edges.  Its weakness lies in the isotropic nature of the neighborhoods it uses to set its smoothing weights.  In response, in this paper we introduce several theoretical and practical {\em anisotropic nonlocal means} (ANLM) algorithms and prove that they are near minimax optimal for edge-dominated images from the Horizon class.  On real-world test images, an ANLM algorithm that adapts to the underlying image gradients outperforms NLM by a significant margin.
\end{abstract}

\begin{keyword}
Denoising, nonlocal means, minimax risk, anisotropy
\end{keyword}

\end{frontmatter}

\section{Introduction}\label{sec:intro}

Image denoising is a fundamental primitive in image processing and computer vision. Denoising algorithms have evolved from the classical linear and median filters to more modern schemes like total variation denoising \cite{RuOsFa92}, wavelet thresholding \cite{JohnstoneMono}, and bilateral filters \cite{SmBr97,Yaroslavsky85,Tomasi:1998fk, Elad:2002kx}.  

A particularly successful denoising scheme is the \textit{nonlocal means} (NLM) algorithm \cite{Buades:2005p4221}, which estimates each pixel value as a weighted average of other, similar noisy pixels. However, instead of using spatial adjacency or noisy pixel value as the similarity measure to adjust the estimate weights, NLM uses a more reliable notion of similarity based on the resemblance of the pixels' neighborhoods in high-dimensional space.  This unique feature benefits NLM in two ways. First, it provides more accurate weight estimates. Second, it enables NLM to exploit the contribution of all pixels in the image. In concert, these features enable NLM to provide remarkable performance for a large class of image denoising problems.

\begin{figure}
\centering{
  \includegraphics[width=6.2cm]{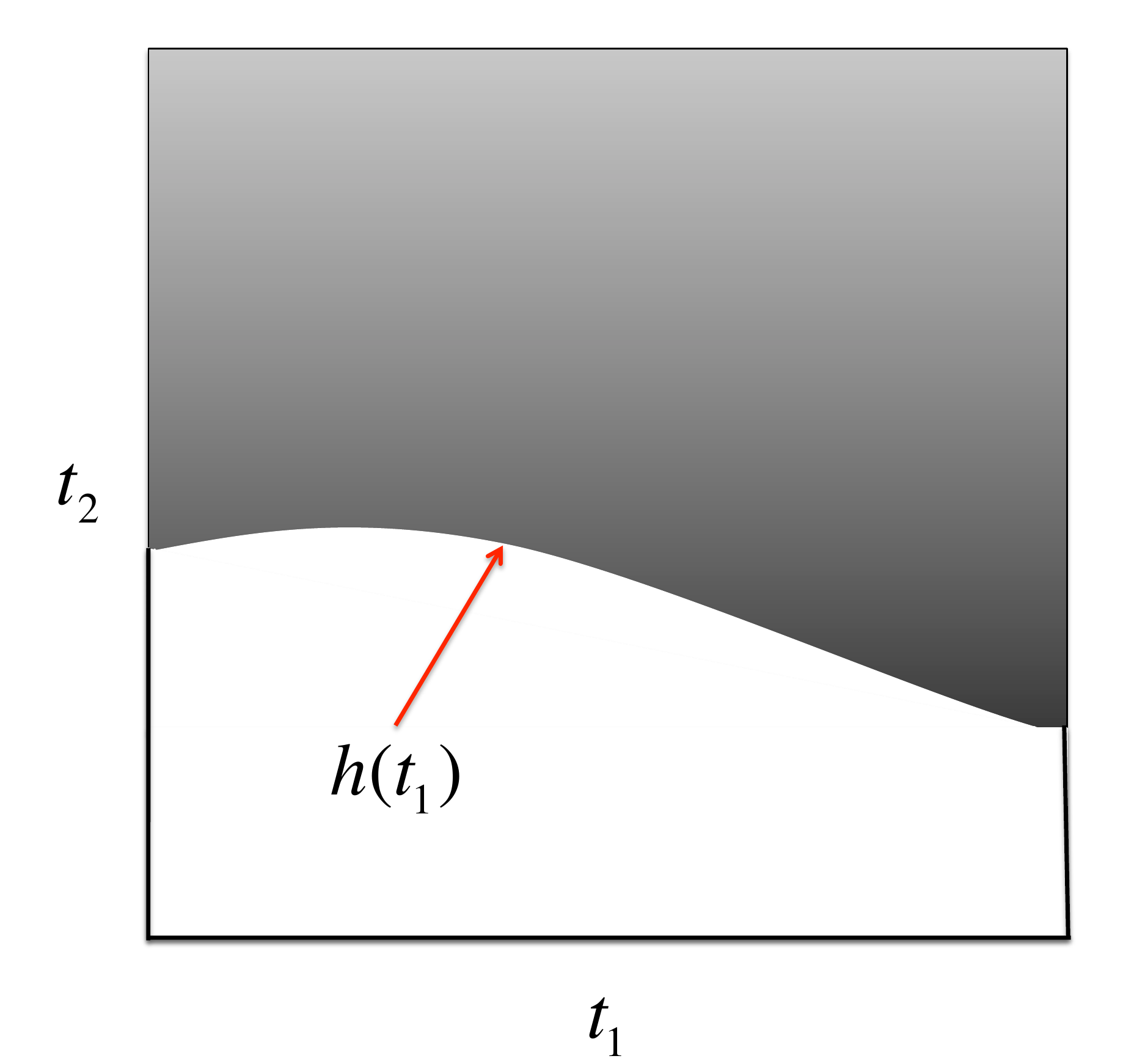} 
  \caption{An example of a Horizon class image that features a smooth edge contour that separates the white region from the black region.}
   \label{fig:horizonfunction}
  }
\end{figure}

Nevertheless, in a recent paper, we have proved that NLM does not attain optimal performance on images with sharp edges from the so-called {\em Horizon class} (see Figure \ref{fig:horizonfunction}) \cite{MaNaBa11}.\footnote{Recently \cite{CaSaWi12} has extended our results to more complicated image$\slash$edge models.} Indeed, NLM's theoretical performance is more or less equivalent to wavelet thresholding, which was shown to be suboptimal in \cite{Donoho:1999p1950}. Empirical results have also confirmed the suboptimality of NLM in estimating sharp edges \cite{DuAjGo11, SaSt10, DeDuSa11, DaFoKatEgi08}.  The core problem is that NLM (and wavelet thresholding) cannot exploit the smoothness of the edge contour that separates the white and black regions.

In this paper, we introduce and study a new denoising framework and prove that it is near-optimal for Horizon class images with sharp edges.  {\em Anisotropic nonlocal means} (ANLM) outperforms NLM, wavelet thresholding, and more classical techniques by using anisotropic neighborhoods that are elongated along and matched to the local edge orientation.  Figure \ref{fig:compareanisotropic} compares the neighborhoods used in ANLM with those
used in NLM.  Anisotropic neighborhoods enable ANLM to distinguish between similar and dissimilar pixels more accurately.

 \begin{figure}
\begin{center}
  \includegraphics[width=.8\textwidth]{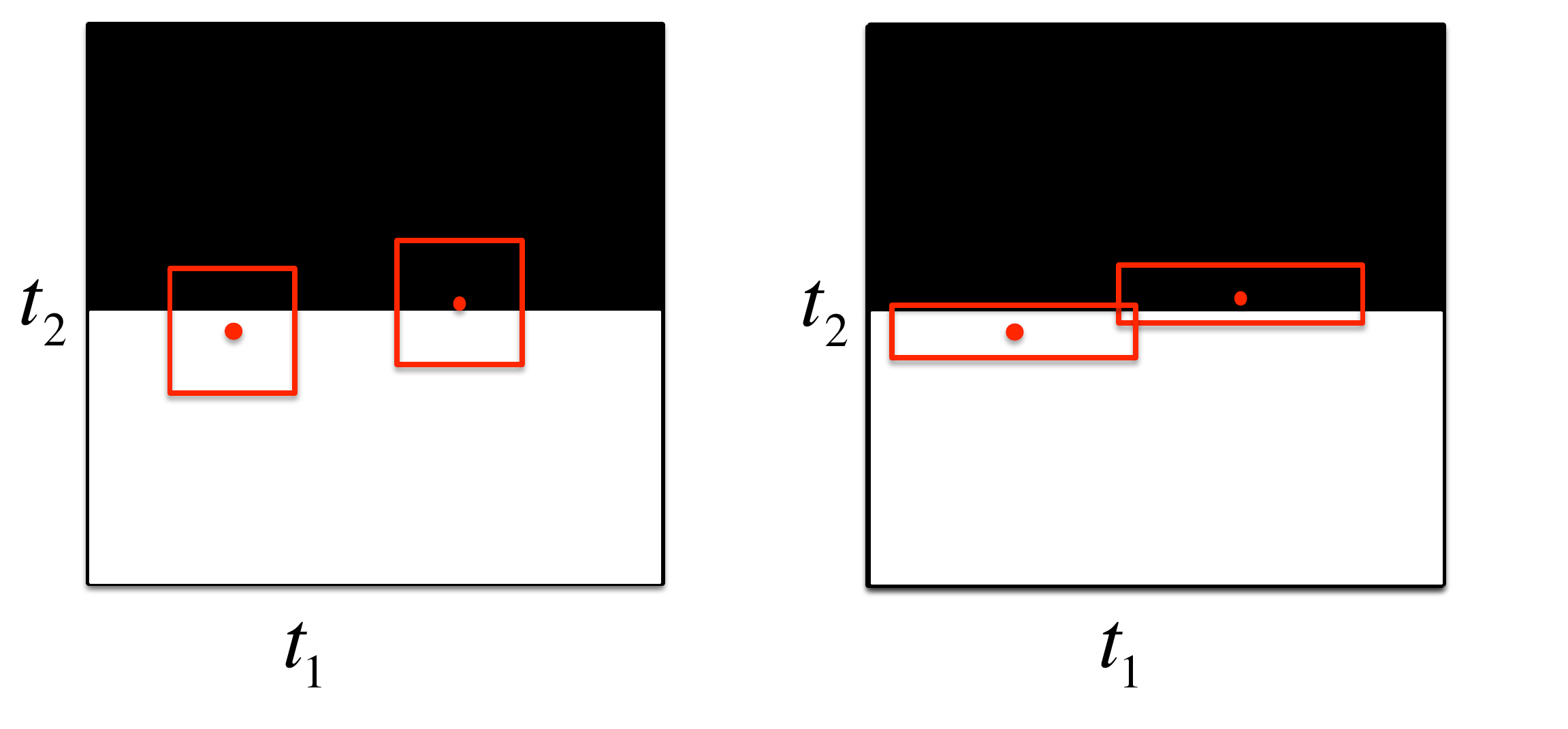} 
  \caption{Comparison of (left) the isotropic neighborhoods employed by Non Local Means (NLM) versus (right) anisotropic neighborhoods employed by Anisotropic NLM (ANLM).}
   \label{fig:compareanisotropic}
  \end{center}
\end{figure}

We develop three different ANLM algorithms of increasing levels of practicality. \textit{Oracle Anisotropic Nonlocal Means} (OANLM) assumes perfect knowledge of the local orientation of the edge contour and is used primarily for our theoretical optimality analysis.  \textit{Discrete-angle Anisotropic Nonlocal Means} (DANLM) optimizes the choice of the anisotropic neighborhood around each pixel in order to achieve near-optimal performance without any oracle information.   Since it is more computationally demanding than NLM, we introduce an algorithmic simplification.  \textit{Gradient based Anisotropic Nonlocal Means} (GANLM) uses image gradient information to estimate the edge orientation; using simulations, we demonstrate that GANLM significantly outperforms NLM in practice on both Horizon class and real-world images.

Anisotropy is a fundamental feature of real-world images. Hence, many denoising algorithms have been proposed to exploit this feature \cite{NaMa79, Takeda06, TaFaMi07, PeMa90, Can99-1, Donoho:2000p1676, Wil03, Wil07, KuLa07, DoVe05, 1407972, Vel06, DeDuSa11, SaSt10, DaFoKatEgi08}. We will review all these algorithms and their similarities and differences with ANLM in Section \ref{sec:related}. 

The paper is organized as follows. Section \ref{sec:minimax} explains the minimax framework we use to analyze the denoising algorithms and reviews the necessary background. Section \ref{sec:anlm} introduces the OANLM and DANLM algorithms and presents the main theorems. Section \ref{sec:simulation} addresses some of the practical ANLM issues by introducing GANLM and summarizes the results of a range of simulations using synthetic and real-world imagery.  
Section \ref{sec:related} reviews the related work in the literature. Section \ref{sec:conc} discusses our current and potential future results.
Section \ref{sec:proofs} contains the proofs of the main theorems.

\section{Minimax analysis framework}\label{sec:minimax}

In this section we introduce the minimax framework \cite{JohnstoneMono, LeCa98} and the Horizon class image model considered in this paper.  Note that, in order to streamline the proofs, we take a continuous-variable analysis approach in this paper, in contrast to our approach in \cite{MaNaBa11}.  The moral of the story is compatible with \cite{MaNaBa11}, however.

\subsection{Risk}

We are interested in estimating an image described by the function $f: [0,1]^2 \rightarrow [0,1]$ ($f \in L^2([0,1]^2)$) from its noisy observation 
\begin{equation}
\label{eq:noisemodel}
dY(t_1, t_2) = f(t_1,t_2)dt_1 dt_2 + \sigma dW(t_1,t_2).
\end{equation}
Without loss of generality, we consider only square images.
Here $W(t_1,t_2)$ is the Wiener sheet,\footnote{The Wiener sheet is the primitive of  white noise.} and $\sigma$ is a constant that scales the noise. For a given function $f$ and a given estimator $\hat{f}$, define the \emph{risk} as
\[
R(f, \hat{f}) = \E (\|f- \hat{f} \|_2^2),
\] 
where the expected value is over $W$. The risk can be decomposed into bias squared and variance terms
\[
R(f, \hat{f}) = \|f- \E \hat{f}\|_2^2+ \E\|\hat{f}-\E\hat{f} \|_2^2.
\]

Let  $f$ belong to a class of functions $\mathcal{F}$. The risk of the estimator $\hat{f}$ on $\mathcal{F}$ is defined as the risk of the least-favorable function, i.e., 
\[
R(\mathcal{F}, \hat{f}) = \sup_{f \in \mathcal{F}} R(f, \hat{f}).
\]
The \textit{minimax} risk over the class of functions $\mathcal{F}$ is then defined as
\[
R^{*}(\mathcal{F}) = \inf_{\hat{f}}   \sup_{f \in \mathcal{F}} R(f, \hat{f}).
\]
$R^{*}(\mathcal{F})$ is a lower bound for the performance of any estimator on $\mathcal{F}$. 

In this paper, we are interested in the asymptotic setting
as $\sigma \rightarrow 0$. For all the estimators we consider, $R(\mathcal{F}, \hat{f}) \rightarrow 0$ as $\sigma \rightarrow 0$. Therefore, following \cite{PePeDoMa07,CaDo02Illposed} we
will consider the {\em decay rate of the minimax risk} as our measure of performance. For this purpose, we will use the following asymptotic notation.

\begin{dfn}
$f(\sigma) = O(g(\sigma))$ as $\sigma \rightarrow 0$, if and only if there exist $\sigma_0$ and $c$ such that for any $\sigma<\sigma_0$, $|f(\sigma)| \leq c |g(\sigma)|$. Likewise, $f(\sigma) = \Omega(g(\sigma))$ as $\sigma \rightarrow 0$, if and only if there exist $\sigma_0$ and $c$ such that for any $\sigma< \sigma_0$, $|f(\sigma)| \geq c |g(\sigma)|$. Finally, $f(\sigma) = \Theta(g(\sigma))$, if and only if $f(\sigma) = O(g(\sigma))$ and $f(\sigma)= \Omega(g(\sigma))$. We may interchangeably use $f(\sigma) \asymp g(\sigma)$ for $f(\sigma) = \Theta(g(\sigma))$.
\end{dfn}

\begin{dfn}
$f(\sigma) = o(g(\sigma))$ if and only if $\lim_{\sigma \rightarrow 0} \frac{f(\sigma)}{g(\sigma)} = 0$.
\end{dfn}

\subsection{Horizon edge model}\label{ssec:edgemodel}
In our analysis, we consider the {\em Horizon} model that contains piecewise constant images with sharp step edges lying along a smooth contour \cite{Donoho:1999p1950,PePeDoMa07,Tsybakov:1993uq} (our analysis extends easily to piecewise smooth edges).  Let ${\sl H\ddot{o}lder}^{\alpha}(C)$ be the class of H$\rm \ddot{o}$lder functions on $\mathds{R}$, defined in the following way: $h \in {\sl H\ddot{o}lder}^{\alpha}(C)$ if and only if
\[
|h^{(k)}(t_1)-h^{(k)}(t'_1)| \leq C|t_1-t'_1|^{\alpha-k},
\]
where $k = \lfloor \alpha \rfloor $. Consider a transformation that maps each one-dimensional edge contour function $h$ to a two-dimensional image $f_h: [0,1]^2 \rightarrow [0,1]$ via
\begin{eqnarray*}
 f_h(t_1,t_2) = \textbf{1}_{\{t_2 < h(t_1)\}}.
 \end{eqnarray*}
The Horizon class of images is then defined as 
\begin{align}
H^{\alpha} (C) = \{f_h(t_1,t_2) : h \in  {\sl H\ddot{o}lder}^{\alpha}(C) \cap {\sl H\ddot{o}lder}^{1}(1)  \},
\end{align} 
where $\alpha$ is the smoothness of the edge contour.  Figure \ref{fig:horizonfunction} illustrates a sample function of this class. The following theorem characterizes the minimax rate of $H^{\alpha} (C)$  \cite{Donoho:1999p1950,PePeDoMa07,Tsybakov:1993uq}.\footnote{The models considered in  \cite{Donoho:1999p1950,PePeDoMa07,Tsybakov:1993uq} are slightly different from the continuous framework of this paper. Therefore, for the sake of completeness we prove Theorem \ref{thm:minimax} in \ref{app:proofminimax}. } 

\bigskip\begin{thm}\label{thm:minimax}
{\rm \cite{Donoho:1999p1950,PePeDoMa07,Tsybakov:1993uq}} For $\alpha \geq1$, the minimax risk of the class $H^{\alpha} (C)$ is
\[
R^{*}(H^{\alpha} (C)) = \Theta\!\left(\sigma^{\frac{2\alpha}{\alpha+1}}\right). 
\]
\end{thm}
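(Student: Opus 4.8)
The plan is to prove the two matching bounds separately. For the upper bound I would use a ``tube'' (vertical-strip) estimator. Partition the horizontal axis $[0,1]$ into $N = \lceil 1/\delta \rceil$ strips of width $\delta$, with $\delta$ to be chosen later. On the $j$-th strip, writing the horizontal coordinate as $t_1 = a_j + \delta u$ with $u \in [0,1]$, Taylor's theorem together with the ${\sl H\ddot{o}lder}^{\alpha}(C)$ bound on the edge contour $h$ shows that the restriction of $h$ agrees, up to a remainder of size $C\delta^{\alpha}$, with a polynomial of degree $k = \lfloor \alpha \rfloor$ in $u$; hence the true image agrees on the strip, up to $L^2$-squared error $O(\delta\cdot\delta^{\alpha})$, with a member of the finite-dimensional family $\mathcal{G}_j = \{\mathbf{1}_{\{t_2 < p(t_1)\}} : p \in \mathcal{P}_k\}$, $\mathcal{P}_k$ being the degree-$k$ polynomials. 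I would then define $\hat f$ strip-by-strip as the least-squares (equivalently maximum-likelihood) projection of the observed $dY$ onto $\mathcal{G}_j$, clipping the fitted edge to $[0,1]$. The crucial estimate is that, because the members of $\mathcal{G}_j$ are indicator functions, perturbing a polynomial coefficient by $\epsilon$ changes the strip image by only $\Theta(\sqrt{\epsilon\delta})$ in $L^2$, whereas the white noise contributes $\Theta(\sigma)$ to any unit-$L^2$-norm linear functional; hence each coefficient---and with it the estimated edge location throughout the strip---is pinned down only to accuracy $\asymp \sigma^2/\delta$. This is the one-dimensional change-point effect ``localization error $\asymp$ noise$^2$'' operating within each strip. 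Using that for horizon-type images the $L^2$-squared image error equals the $L^1(t_1)$ error of the edge, the per-strip contribution is $O\!\big(\delta(\delta^{\alpha}+\sigma^2/\delta)\big)$, and summing over the $N \asymp 1/\delta$ strips gives $R(\hat f) = O(\delta^{\alpha}) + O(\sigma^2/\delta)$; choosing $\delta \asymp \sigma^{2/(\alpha+1)}$ balances the two terms and yields $R^{*}(H^{\alpha}(C)) = O(\sigma^{2\alpha/(\alpha+1)})$.

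For the lower bound I would apply Assouad's lemma. Fix a $C^{\infty}$ bump $\psi$ supported in $(0,1)$ with $\|\psi\|_{\infty}\le 1$, and for $\omega \in \{0,1\}^{m}$ set $h_{\omega}(t_1) = \tfrac12 + \theta\sum_{j=1}^{m}\omega_j\,\psi(m t_1 - j + 1)$. The scale $\theta \asymp m^{-\alpha}$ is forced by requiring $h_{\omega}\in {\sl H\ddot{o}lder}^{\alpha}(C)\cap {\sl H\ddot{o}lder}^{1}(1)$ for every $\omega$: the $k$-th derivative of the $j$-th bump has size $\theta m^{k}$ and ${\sl H\ddot{o}lder}^{\alpha-k}$ modulus of order $\theta m^{\alpha}$, while the Lipschitz constraint $\theta m \le 1$ is then automatic since $\alpha \ge 1$. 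Two images $f_{h_\omega}$ and $f_{h_{\omega'}}$ differing in a single coordinate differ on a set of area $\asymp \theta/m \asymp m^{-(\alpha+1)}$, so the per-coordinate $L^2$-squared separation is $\asymp m^{-(\alpha+1)}$, while in the white-noise model the per-coordinate Kullback--Leibler divergence is $\tfrac{1}{2\sigma^2}\|f_{h_\omega}-f_{h_{\omega'}}\|_2^2 \asymp m^{-(\alpha+1)}/\sigma^2$. Taking $m \asymp \sigma^{-2/(\alpha+1)}$ keeps this divergence bounded, so neighboring hypotheses are not perfectly distinguishable, and Assouad's lemma gives $R^{*}(H^{\alpha}(C)) \gtrsim m \cdot m^{-(\alpha+1)} = m^{-\alpha} \asymp \sigma^{2\alpha/(\alpha+1)}$.

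I expect the main obstacle to be the stochastic part of the upper bound: turning the heuristic ``regular family $\Rightarrow$ each coefficient localized to noise$^2$'' into a genuine bound on $\E\|\hat f - f\|_2^2$ requires uniform control over $\mathcal{P}_k$ of the least-squares criterion---a small-ball / empirical-process argument exploiting the $\sqrt{\mathrm{area}}$ geometry of $\mathcal{G}_j$ and yielding exponential tails for the localization error---together with bookkeeping for the strips on which the fitted edge leaves $[0,1]$ or is nearly horizontal. The lower bound is comparatively routine once the ${\sl H\ddot{o}lder}^{\alpha}(C)$ membership constants for the $h_{\omega}$ (including the interaction of adjacent bumps) are checked; note that the ${\sl H\ddot{o}lder}^{1}(1)$ constraint is inactive in that construction and only mildly constrains the upper-bound analysis.
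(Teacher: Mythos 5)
Your lower bound is, in substance, the same argument the paper gives in its appendix: a hypercube of horizon functions obtained by adding $m$ disjoint bumps of height $\Theta(m^{-\alpha})$ and width $1/m$ to a flat edge, per-coordinate squared $L^2$ separation $\Theta(m^{-(\alpha+1)})$, and the calibration $m \asymp \sigma^{-2/(\alpha+1)}$ so that neighboring hypotheses are statistically indistinguishable. The paper runs this for $\alpha=2$ by projecting an arbitrary estimator onto the hypercube class and reducing to $m$ independent scalar problems $y_i = \kappa_m\zeta_i + N(0,\sigma^2\kappa_m)$ with $\kappa_m=\Theta(m^{-3})$; invoking Assouad's lemma as you do is a packaged version of the same reduction, and your H\"older and Lipschitz membership checks (including the observation that the ${\sl H\ddot{o}lder}^{1}(1)$ constraint is inactive for $\alpha\ge 1$) are the right ones. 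So on the lower bound you and the paper coincide, except that you treat general $\alpha\ge 1$ while the paper restricts to $\alpha=2$.

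The difference is on the upper bound. The paper's appendix proves only the lower bound and defers the matching upper bound to the cited references; its own constructive result (Theorem 2) attains the rate only up to a $|\log\sigma|^{4/3}$ factor. Your strip-wise polynomial-edge least-squares estimator is the classical route to the sharp upper bound, and the rate bookkeeping ($\delta\cdot\delta^{\alpha}$ approximation error plus $\delta\cdot\sigma^{2}/\delta$ stochastic error per strip, $1/\delta$ strips, $\delta\asymp\sigma^{2/(\alpha+1)}$) is correct. The one genuine gap is the step you yourself flag: the claim that the least-squares fit over $\mathcal{G}_j$ localizes the edge to accuracy $\sigma^{2}/\delta$ \emph{in expectation} requires a real argument, because the family of indicator images is not a regular parametric family --- the $L^2$ distance scales like the square root of the parameter perturbation, so asymptotic-normality or local-quadratic reasoning does not apply, and one must run a peeling or chaining argument over $\mathcal{P}_k$ in the $\sqrt{\mathrm{area}}$ metric to obtain tails strong enough that the rare strips on which the fit fails badly contribute $o(\sigma^{2\alpha/(\alpha+1)})$ after being charged the worst-case per-strip loss $\delta$. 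That is the bulk of the work in the cited literature and is not reproved in this paper, so you should either carry it out in full or cite it, rather than leave it as an anticipated obstacle.
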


Achieving the minimax rate is a laudable goal that any well-respecting denoising algorithm should aspire to.  In this paper, we will focus primarily on $\alpha =2$ edge contours, for which the optimal minimax decay rate is $\sigma^{4/3}$. However, it is straightforward to draw similar conclusions for the other values of $\alpha$. 

\subsection{Image denoising algorithms}\label{sec:imden}

Minimax risk analysis of the classical denoising algorithms has revealed their suboptimal performance on images with sharp edges.  Table \ref{tabl:denoise} summarizes several of these algorithms and their minimax decay rates (up to a log factor).\footnote{The analysis framework used in \cite{CaDo09} and  \cite{MaNaBa11} is discrete rather than continuous.  Therefore, for the sake of completeness we establish the results for the mean filter and NLM in \ref{app:meanfilter} and \ref{app:nlm}, respectively.} The suboptimality of wavelet thresholding has led to the development of ridgelets \cite{Can99-1}, curvelets \cite{Donoho:2000p1676}, wedgelets \cite{Donoho:1999p1950}, platelets \cite{Wil03}, shearlets \cite{KuLa07}, contourlets \cite{DoVe05}, bandelets \cite{1407972}, directionlets \cite{Vel06} and other types of directional transforms.  See \cite{RaMaPo11, JaDuCuPe11} and the references therein for more information.  In the framework of this paper, wedgelet denoising \cite{Donoho:1999p1950} provably achieves the optimal minimax rate. However, it performs poorly on textures, which has limited its application in image processing.


\begin{table}[t] \footnotesize
\caption{Minimax risk decay rates of several classical image denoising algorithms; recall that the according to Theorem \ref{thm:minimax}, for $\alpha=2$ the optimal minimax rate is $\sigma^{4/3}$.}
\begin{center}
\scalebox{1.2}{
\begin{tabular}{| l | c |}
	\hline
	 Algorithm & Minimax Rate  \\ \hline \hline
	 Mean filter \cite{CaDo09}& $\sigma^{2/3}$ \\ \hline
	 Wavelet thresholding \cite{Donoho:1999p1950,PePeDoMa07} & $\sigma^{1}$ \\ \hline
	 Nonlocal means \cite{MaNaBa11}& $\sigma^{1}$ \\ \hline
\end{tabular}
}
\end{center}
\label{tabl:denoise}
\end{table}

\subsection{Nonlocal means denoising}

In 2005, Buades, Coll, and Morel significantly improved the performance of the bilateral filter \cite{Tomasi:1998fk} by incorporating a new notion of pixel similarity.  As in the mean filter, NLM estimates each pixel value using a weighted average of other pixel values in the image. However, NLM sets the weights according to the similarity between the pixel neighborhoods rather than the pixel values.  Furthermore, in contrast to the bilateral filter, in which only the vicinity of each pixel contributes to the estimate, in NLM all pixels may contribute.  

Specifically, the NLM algorithm is defined as follows. Let $S \triangleq [0,1]^2$ represent the domain of the image. Define the $\delta$-neighborhood of a $(t_1, t_2)$ as
\begin{eqnarray}
I_{\delta}(t_1,t_2) \triangleq  \left[t_1- \frac{\delta}{2}, t_1+\frac{\delta}{2} \right] \times \left[t_2- \frac{\delta}{2}, t_2+\frac{\delta}{2}\right].
\end{eqnarray}
Following the definition of NLM in the discrete setting, we pixelate this neighborhood by partitioning $I_{\delta}(t_1,t_2)$ into $n^2$ subregions $I^{j_1,j_2}_{\delta} \triangleq  [t_1- \frac{j_1\delta}{2n}, t_1+\frac{j_1\delta}{2n}] \times [t_2- \frac{j_2\delta}{2n}, t_2+\frac{j_2\delta}{2n}]$. The pixelated neighborhood of $(t_1,t_2)$ is defined as $\mathbf{y}^{\delta}_{t_1,t_2} \in \mathds{R}^{n \times n}$ and satisfies
\[
\mathbf{y}_{t_1,t_2}^{\delta}(j_1,j_2) \triangleq \frac{n^2}{\delta^2}\int_{(s_1,s_2) \in I^{j_1,j_2}_{\delta} } dY(s_1,s_2).
\]
We further define the pixelated process as 
\[
X(t_1,t_2) \triangleq \frac{n^2}{\delta^2} \int_{(s_1,s_2) \in I_{\delta/n}(t_1,t_2) } dY(s_1,s_2).
\]
Define the $\delta$-neighborhood distance between two points in the image as
\begin{eqnarray}
 \lefteqn{d^2_{\delta}(dY(t_1,t_2), dY(s_1,s_2))} \nonumber \\
  \! \!&\triangleq& \! \! \! \frac{1}{n^2-1} \left( \| \mathbf{y}_{t_1,t_2}^{\delta}- \mathbf{y}_{s_1,s_2}^{\delta}  \|_2^2 
 - |\mathbf{y}_{t_1,t_2}^{\delta}(0,0) - \mathbf{y}_{s_1,s_2}^{\delta}(0,0)|^2\right),
\end{eqnarray}
where $\|\mathbf{y}^{\delta}_{t_1,t_2}\|_2^2 \triangleq \sum_{i,j} (\mathbf{y}^{\delta}_{t_1,t_2}(i,j))^2$.
Note that in contrast to the definition in \cite{Buades:2005p4221}, we have removed the center element $|\mathbf{y}_{t_1,t_2}^{\delta}(0,0) - \mathbf{y}_{s_1,s_2}^{\delta}(0,0)|^2$ from the summation. As we will see in Section \ref{sec:anlm}, $n^2 \rightarrow \infty$ as $\sigma \rightarrow 0$, and hence the effect of removing the center pixel is negligible on the asymptotic performance. But, as we will see below,
removing the center element simplifies the calculations considerably. NLM uses the neighborhood distances to estimate
\[
\hat{f}^{N}(t_1,t_2) \triangleq \frac{\int_{(s_1,s_2) \in S} w^N_{t_1,t_2} (s_1,s_2)X(s_1,s_2) ds_1 ds_2}{\int_{(s_1,s_2) \in S} w^N_{t_1,t_2}(s_1,s_2) ds_2 ds_2 },
\]
where $w^N_{t_1,t_2} (s_1,s_2)$ is set according to the $\delta$-neighborhood distance between $dY(t_1,t_2)$ and $dY(s_1,s_2)$.\footnote{We assume that both $w^N_{t_1,t_2} (s_1,s_2)X(s_1,s_2)$ and $w^N_{t_1,t_2}(s_1,s_2)$ are Lebesgue integrable with high probability.}  It is straightforward to
verify that $\E d^2_{\delta}(dY(t_1,t_2),$ $dY(s_1,s_2)) =  d^2_{\delta}(f(t_1,t_2), f(s_1,s_2)) +\frac{2n^2\sigma^2}{\delta^2}$, which suggests the following strategy 
for setting the weights:
\begin{eqnarray}
w^{N}_{t_1,t_2}(s_1,s_2) \triangleq \! \! \left\{\begin{array}{rl}
 1 &  \mbox{ if $d^2_{\delta} (dY(t_1,t_2),dY(s_1,s_2)) \leq \frac{2n^2\sigma^2}{\delta^2}+ \tau_{\sigma} $,}\\
 0 &   \mbox{ otherwise,}
\end{array}\right.
\end{eqnarray}
where $\tau_\sigma$ is the \textit{threshold parameter}. Soft/tapered weights have been explored and are often used in practice \cite{Buades:2005p4221}. However, the above 
untapered weights capture the essence of the algorithm while simplifying the analysis.  

The distinguishing feature of NLM --- the weighted averaging of pixels based on the neighborhoods --- produces a decay rate that is superior to that of linear filters as shown in  \cite{MaNaBa11}. However, compared to the optimal rate of $\sigma^{4/3}$, NLM remains suboptimal. We introduce the anisotropic NLM algorithm in the next section to address this gap in performance.

\section{Anisotropic nonlocal means denoising}\label{sec:anlm}

In this section, we introduce the Anisotropic Nonlocal Means (ANLM) concept that exploits the smoothness of edge contours via
anisotropic neighborhoods. We then present OANLM and DANLM algorithms and explain their performance guarantees.  

\subsection{Directional neighborhoods}\label{ssec:dirnei}

We now formally introduce the notion of directional neighborhoods. Then we extend NLM to exploit such neighborhoods.  This will lay the foundation for the OANLM and DANLM algorithms. 

Let $R^{\theta}_{\nu,\mu}(\cdot)$ represent the rotation operator on a neighborhood; when applied to a generic point $(u,v) \in {S}$ in the neighborhood around $(\nu,\mu)$, $R^{\theta}_{\nu,\mu}$ rotates $(u,v)$ by $\theta^{\circ}$ counter-clockwise around the point $(\nu,\mu)$. For a set $Q \subset {S}$ we define $Q_{\theta} \triangleq R^{\theta}_{\nu,\mu}(Q)$ as
\[
(s,t) \in Q_{\theta} \Leftrightarrow \exists (u,v) \in Q \  {\rm such \ that}  \ (s,t) = R^{\theta}_{\nu,\mu}((u,v)).
\]

\noindent The \textit {$(\theta, \delta_s, \delta_{\ell})$-anisotropic neighborhood} of the point $(t_1,t_2)$ is defined as
\begin{eqnarray*}
{I}_{\theta, \delta_s, \delta_{\ell}}(t_1,t_2) \triangleq R^{\theta}_{t_1,t_2}\left(\left[t_1- \frac{\delta_{\ell}}{2}, t_1+ \frac{\delta_{\ell}}{2}\right] \times\left[t_2- \frac{\delta_{s}}{2}, t_2+ \frac{\delta_{s}}{2}\right] \right) \cap S,
\end{eqnarray*}
where $\theta$, $\delta_{\ell}$, and $\delta_{s}$ ($\delta_s \leq \delta_{\ell}$) represent the orientation angle, length, and width of the neighborhood, respectively. Figure \ref{fig:anisotropicneighborhood} displays such neighborhoods for two different pixels.  We partition ${I}_{\theta, \delta_s, \delta_{\ell}}(t_1,t_2)$ into $n_s \times n_{\ell}$ subregions ${I}^{j_1,j_2}_{\theta, \delta_s, \delta_{\ell}}(t_1,t_2) \triangleq  R^{\theta}_{t_1,t_2} ([t_1- \frac{j_1\delta_{\ell}}{2n_{\ell}}, t_1+ \frac{j_1\delta_{\ell}}{2n_{\ell}} ]\times [t_2- \frac{j_2\delta_{s}}{2n_{s}}, t_2+ \frac{j_2\delta_{s}}{2n_{s}} ] )$. 

\begin{figure}
\begin{center}
  \includegraphics[width=6cm]{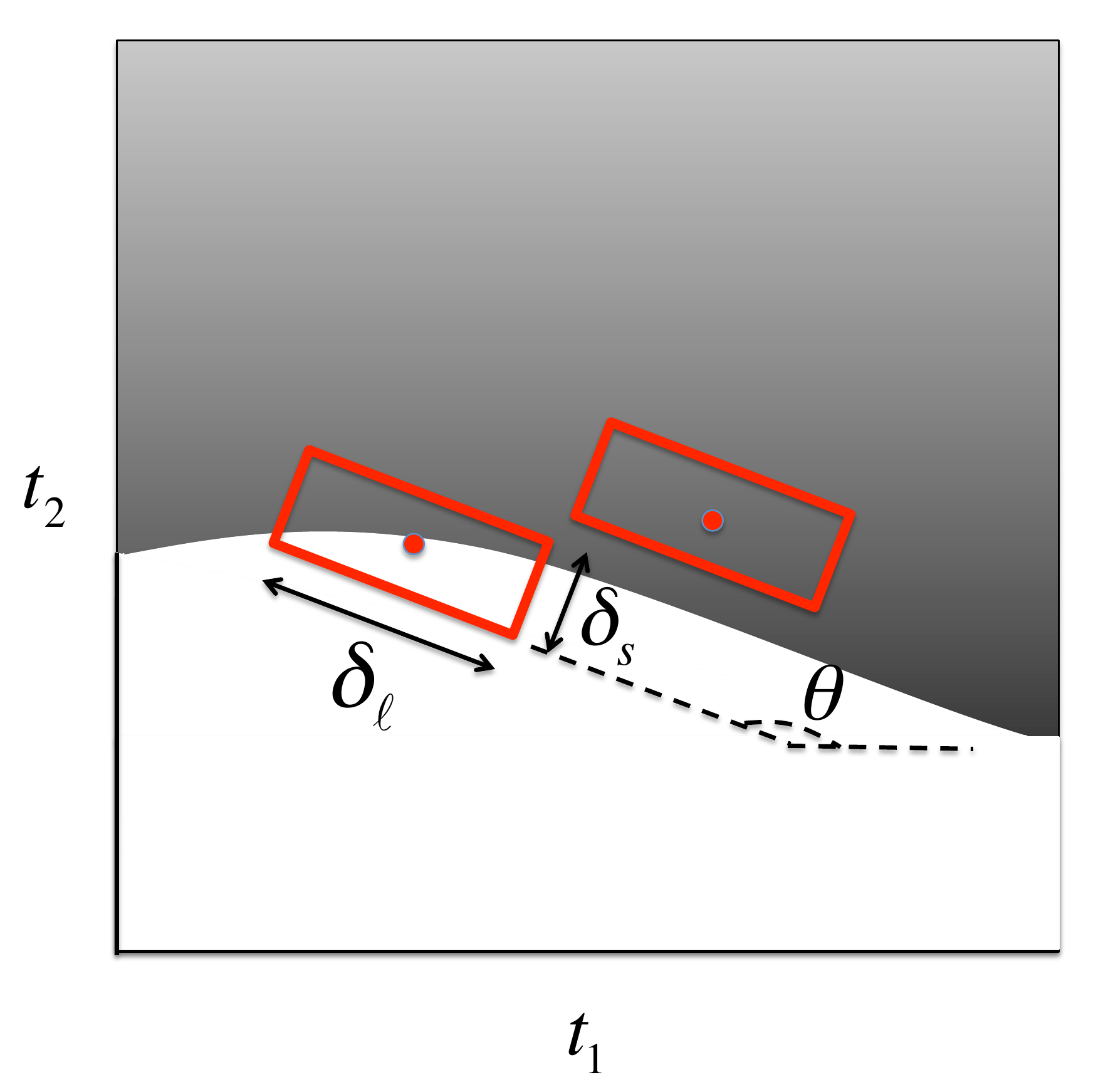}\\
   \caption{The anisotropic neighborhood ${I}_{\theta, \delta_s, \delta_{\ell}}(t_1,t_2)$ in the discrete setting for two different pixels of a Horizon class image.}
    \label{fig:anisotropicneighborhood}
 \end{center}
\end{figure}

The pixelated neighborhood of $(t_1,t_2)$ is defined as the $\mathbf{y} \in \mathds{R}^{n_s \times n_{\ell}}$ satisfying
\[
\mathbf{y}^{\theta, \delta_{\ell}, \delta_{s}}_{t_1,t_2}(j_1,j_2) \triangleq \frac{n_s n_{\ell}}{\delta_s \delta_{\ell}} \int \int_{(s_1,s_2) \in I^{j_1,j_2}_{\theta,\delta_s,\delta_{\ell} }(t_1, t_2) } dY(s_1,s_2).
\]
Define the neighborhood process as
\[
X(t_1,t_2) \triangleq \frac{n_s n_{\ell}}{\delta_s \delta_{\ell}} \int \int_{(s_1,s_2) \in I_{\theta,\frac{\delta_s}{n_s},\frac{\delta_{\ell}}{n_\ell} }(t_1, t_2)} dY(s_1,s_2).
\]
The anisotropic $(\delta_{\ell}, \delta_s, \theta)$-neighborhood distance between two given points in the image is then defined as 
\begin{eqnarray}\label{eq:direcdistance}
\lefteqn{d^{2}_{\theta, \delta_s,\delta_{\ell}} (dY(t_1,t_2), dY(s_1,s_2))} \nonumber \\
 & \triangleq & \! \!\! \frac{1}{n_s n_{\ell}-1} \left( \|\mathbf{y}^{\theta, \delta_{\ell}, \delta_{s}}_{t_1,t_2}-\mathbf{y}^{\theta, \delta_{\ell}, \delta_{s}}_{s_1,s_2} \|_2^2 - |\mathbf{y}^{\theta, \delta_{\ell}, \delta_{s}}_{t_1,t_2}(0,0)-\mathbf{y}^{\theta, \delta_{\ell}, \delta_{s}}_{s_1,s_2}(0,0) |^2 \right).
\end{eqnarray}
The ANLM estimate at point $(t_1,t_2)$ is given by
\begin{eqnarray*}
\hat{f}^{\theta, \delta_s, \delta_{\ell}}(t_1,t_2) \triangleq \frac{ \int_{(s_1, s_2) \in S} w^{\theta, \delta_s, \delta_{\ell}}_{t_1,t_2}(s_1,s_2)X(s_1,s_2) ds_1 ds_2 }{ \int \int _{(s_1, s_2) \in S} w^{\theta, \delta_s, \delta_{\ell}}_{t_1,t_2}(s_1,s_2) ds_1 ds_2 }.
\end{eqnarray*}
where the weights are obtained from 
\begin{eqnarray}\label{eq:weiasscont}
w^{\theta,\delta_s, \delta_{\ell}}_{t_1,t_2}(s_1,s_2) =\! \! \left\{\begin{array}{rl}
 1 &  \mbox{ if $d^2_{\theta,\delta_s,\delta_{\ell}} (dY(t_1,t_2),dY(s_1,s_2)) \leq \frac{2n_sn_{\ell}\sigma^2}{\delta_s \delta_{\ell}}+ \tau_{\sigma} $,}\\
 0 &   \mbox{ otherwise.}
\end{array}\right.
\end{eqnarray}
Here $\tau_\sigma$ is the \textit{threshold parameter} and $\frac{2n_sn_{\ell}\sigma^2}{\delta_s \delta_{\ell}}+ \tau_{\sigma} $ is the \textit{threshold value}. 

ANLM extends NLM in two significant ways. First, in ANLM $\theta, \delta_s, \delta_{\ell}$ are free parameters, while in NLM $\theta=0$
and $\delta_s = \delta_{\ell}$. As Figures \ref{fig:tuning} and \ref{fig:ONLM_diffdir} demonstrate, these free parameters significantly affect the performance of ANLM. Figure \ref{fig:tuning} illustrates the visual denoising results as we vary the angle of the anisotropic neighborhood.  Figure \ref{fig:ONLM_diffdir} does the same for the empirical peak signal-to-noise ratio (PSNR).\footnote{In practice images are measured on a  discrete lattice, i.e., the pixelated values are known. Let the pixelated values of the original noise-free image and the estimator be  $f_{ij}$ and $\hat{f}_{i,j}$, respectively. The empirical mean-squared-error (MSE) of an estimator $\hat{f}$ is defined as ${\rm MSE} = \frac{1}{n^2}  \sum_{i,j} (\hat{f}_{i,j} - f_{i,j})^2$. If $f_{i,j}  \in [0, A]$, then the empirical peak signal-to-noise ratio (PSNR) is defined as $10 \log_{10} \frac{A^2}{\rm MSE}$. }

\begin{figure}
	\captionsetup[subfigure]{labelformat=empty}
	\centering
	\subfloat[(a)~$0^{\circ}$, ${\rm PSNR}=20.6$dB]{\label{fig:hor_ganlm}
	\includegraphics*[width = 0.40\textwidth, height = 0.40\textwidth]{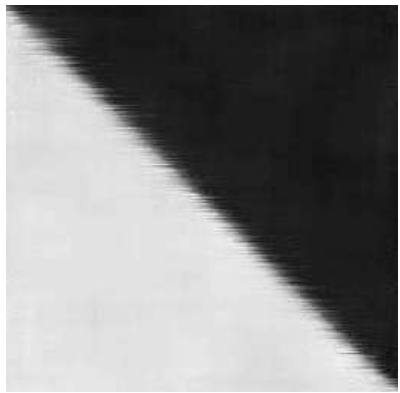}}
	\subfloat[(b)~$45^{\circ}$, ${\rm PSNR}=19.2$dB]{\label{fig:hor_ganlm}
	\includegraphics*[width = 0.40\textwidth, height = 0.40\textwidth]{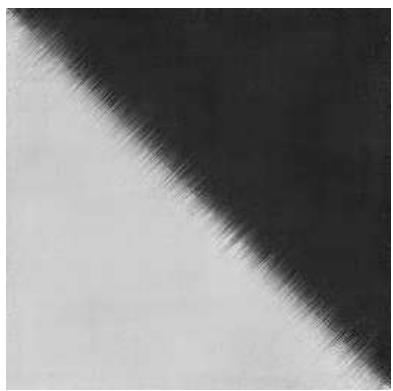}} \\
	\subfloat[(a)~$90^{\circ}$, ${\rm PSNR}=20.6$dB]{\label{fig:hor2} 
	\includegraphics*[width = 0.42\textwidth, height = 0.42\textwidth]{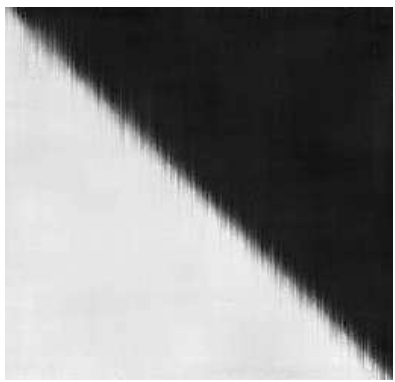}} 
	\subfloat[(d)~$135^{\circ}$, ${\rm PSNR}=28.9$dB]{\label{fig:hor_nlm}
	\includegraphics*[width = 0.40\textwidth, height = 0.40\textwidth]{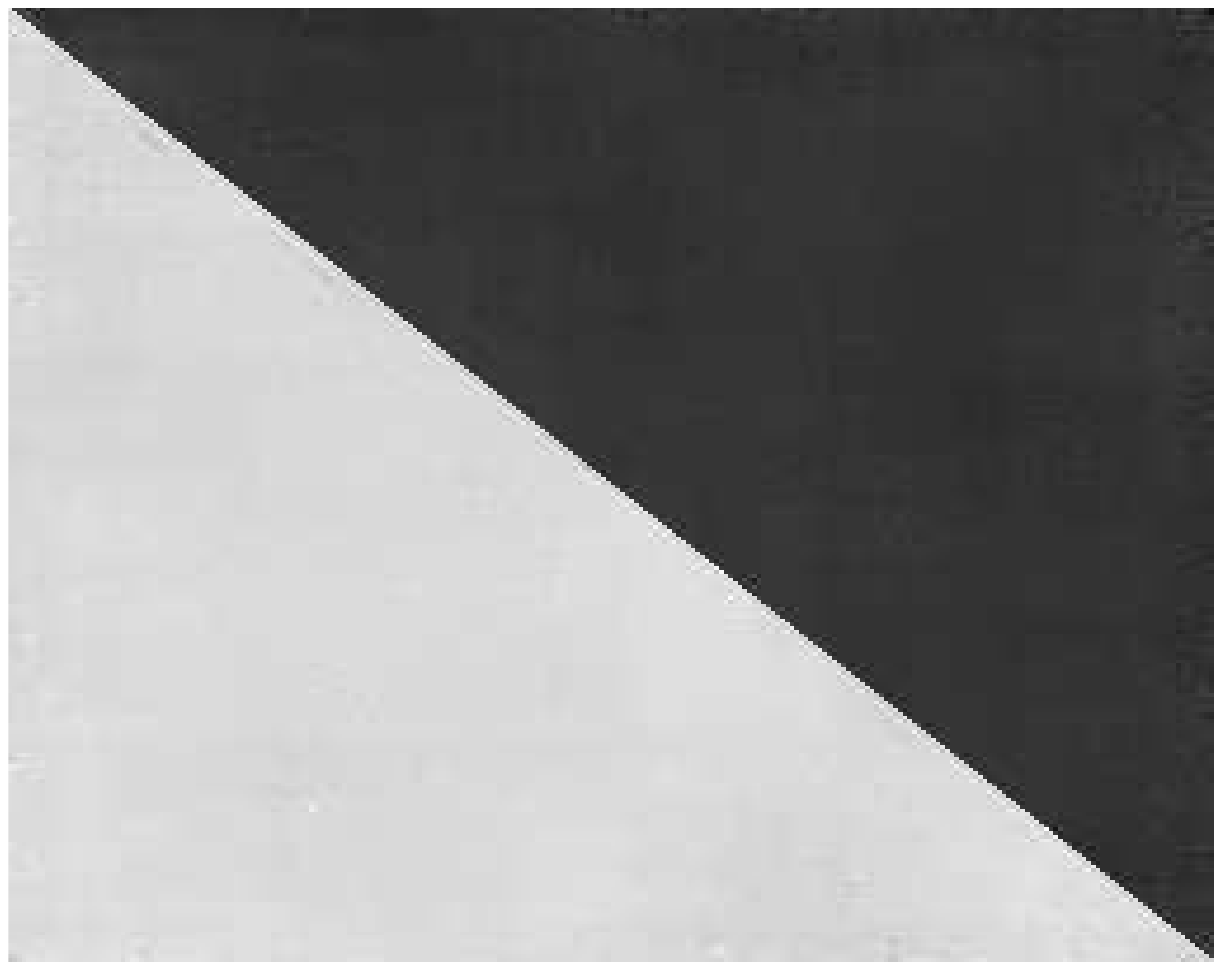}} 
  \caption{Effect of the anisotropic neighborhood orientation angle $\theta$ on the denoising performance of ANLM. The images are $256 \times 256$ pixels with a true edge orientation of $135^{\circ}$. The noise standard deviation is $\xi = 0.5$, which is equivalent to ${\rm PSNR} =6.1$dB (see Section \ref{ssec:discset}).  The neighborhood size and threshold parameter are $\delta_s = 1/256$, $\delta_{\ell}=20/256$ and $\tau=2.7\xi^2$, respectively.  The angles of the neighborhoods ($\theta$) and the resulting PSNR of the estimates are noted below the images.  
There is a substantial subjective and objective improvement in the performance of ANLM when the neighborhood is aligned with the edge.}
    \label{fig:tuning}
\end{figure}

\begin{figure}[!t] 
\begin{center}
  \includegraphics[width=8cm]{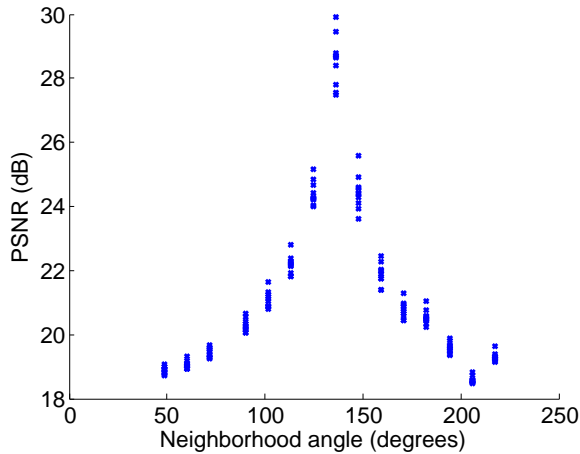}
\caption{Effect of the anisotropic neighborhood orientation angle $\theta$ on the denoising performance of ANLM.  The experimental parameters are the same as in Figure \ref{fig:tuning}.  For each value of $\theta$ we plot the result of 10 Monte Carlo simulations. PSNR of the original noisy image is $6.1$dB}
  \label{fig:ONLM_diffdir}
  \end{center}
\end{figure}

\subsection{Oracle ANLM} \label{ssec:oracle}

It is clear from Figures \ref{fig:tuning} and \ref{fig:ONLM_diffdir} that we should align the ANLM neighborhood locally with the edge to maximize denoising performance.  In the Oracle ANLM (OANLM) algorithm, we assume that we have access to an oracle that knows the image's edge orientation at each pixel.
Consider $f_h(t_1, t_2) \in H^{\alpha}(C)$.  Let $h^{\prime}(t_1)$ denote the derivative of the edge contour $h(t_1)$, and let ${\Gamma}_{\gamma} \triangleq \{(t_1,t_2) \ : \ t_1 = \gamma  \}$ segment the region $S$.  Define $\theta_{\gamma}$ as the angle between the tangent to the edge contour $h$ and the horizontal line at $t_1 = \gamma$.  OANLM is defined as ANLM with the following parameter settings:
\begin{itemize}

\item {\em Quadratic scaling:} Instead of using isotropic (square) neighborhoods, we use anisotropic (rectangular) neighborhoods of size $\delta_s \times \delta_{\ell}$. The scaling of $\delta_s$ and $\delta_{\ell}$ depends on the smoothness of the edge. Since we are primarily interested in ${H}^2(C)$, we will emphasize the quadratic scaling\footnote{If the smoothness of an edge is $\alpha>1$, then the optimal neighborhood size is given by $\delta_s = \Theta(\sigma^{2\alpha/(\alpha+1)} |\log \sigma|^{2\alpha/(\alpha+1))}$ and $\delta_{\ell} = \Theta(\sigma^{2/(\alpha+1)} |\log \sigma|^{2/(\alpha+1)})$. Note that for the special case $\alpha =2$ they satisfy the quadratic scaling. }
 $\delta_s = 4\sigma^{4/3} |\log{\sigma}|^{4/3}$ and $\delta_{\ell} = 2\sigma^{2/3} |\log{\sigma}|^{2/3}$; that is $\delta_s = \delta_{\ell}^2$.
 
\item {\em Aligned neighborhoods:} 
We align the neighborhood at point $(t_1,t_2)$ to the orientation $\theta_{t_1}$ of the edge contour at $(t_1, h(t_1))$.  Thus all pixels in a column have the same orientation. Figure \ref{fig:ONLM} illustrates such a neighborhood selection.\footnote{In smooth regions, where the pixel neighborhoods do not intersect with the edge, neither the shape nor the orientation of the neighborhood affect the denoising performance.}

\item {\em Logarithmic pixelation:} We assume that $n_s \times n_{\ell} = \lceil \log^2 \sigma \rceil$. In other words, as $\sigma$ increases the resolution of the pixelated image increases as well. 
\end{itemize}
\begin{figure}
\begin{center}
  \includegraphics[width=6.1cm]{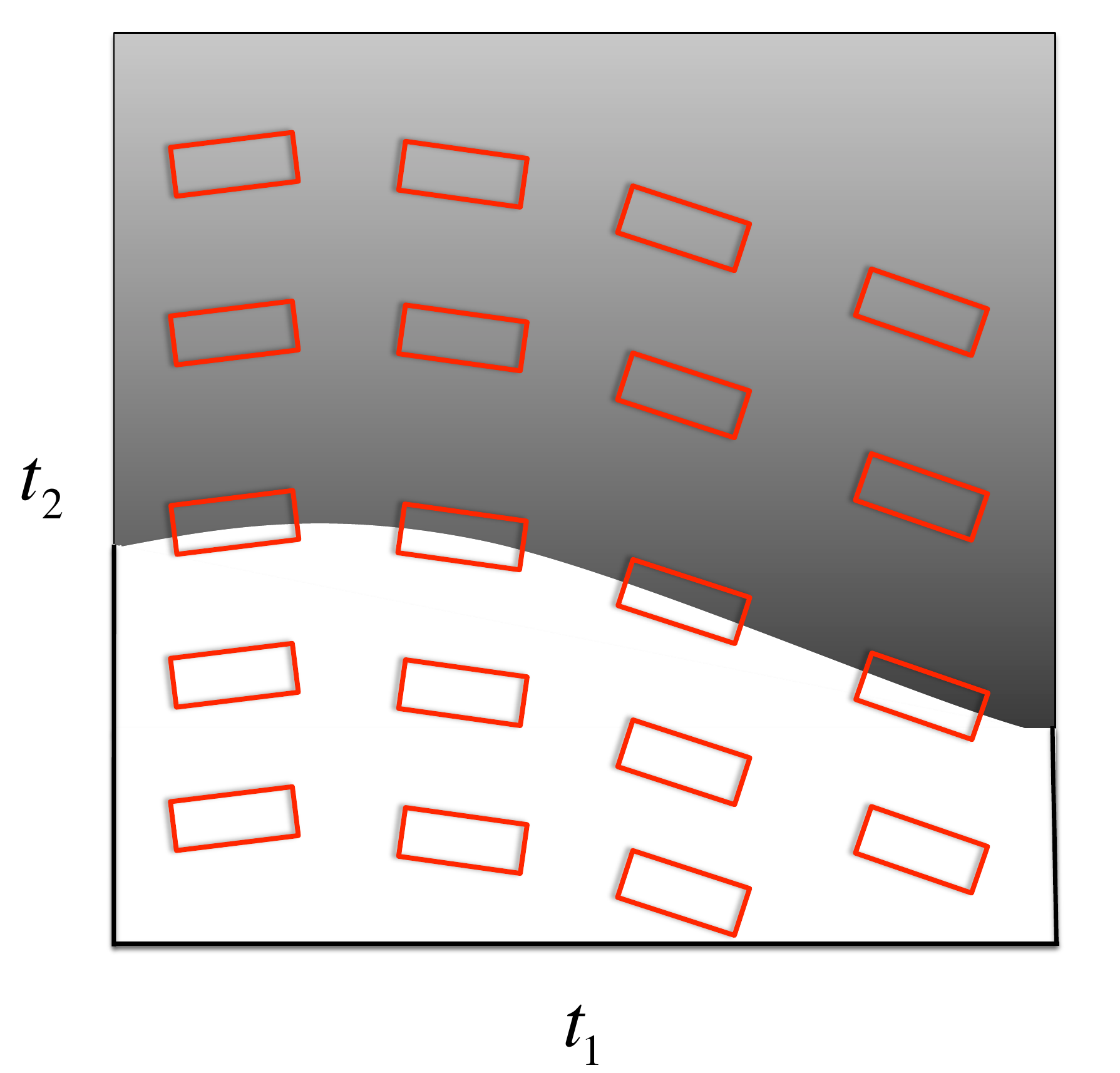}\\
  \caption{The OANLM neighborhoods satisfy the quadratic scaling $\delta_s = \delta_{\ell}^2$ and are aligned with the edge. In regions far from the edge, isotropic neighborhoods perform just as well.}
  \label{fig:ONLM}
  \end{center}
\end{figure}


\begin{thm}\label{thm:oracleANLM}
If $\hat{f}^{O}$ is the OANLM estimator with the threshold $\tau_{\sigma} = \frac{2}{\sqrt{|\log \sigma|}}$, then
\begin{eqnarray*}
R({H}^{\alpha}(C), \hat{f}^{O}) = O(\sigma^{4/3} |\log \sigma|^{4/3} ).
\end{eqnarray*}
\end{thm}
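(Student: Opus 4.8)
The plan is to bound $R(H^\alpha(C),\hat f^{O}) = \sup_{h}\E\,\|f_h-\hat f^{O}\|_2^2$ by integrating a pointwise risk bound over $S$. Writing $t=(t_1,t_2)$ and letting $d(t)$ be the distance from $t$ to the graph of $h$, the domain splits naturally into the \emph{active set} $A_h$ of points whose aligned $(\theta_{t_1},\delta_s,\delta_\ell)$-neighborhood meets the graph of $h$, and the complementary \emph{inactive set}, on which the neighborhood of $t$ lies entirely inside one of the two constant regions of $f_h$. I would prove the theorem for $\alpha=2$; for $\alpha>2$ it only gets easier (neighbouring columns then carry nearly equal edge curvature at the relevant scale, which is the one place where the value of $\alpha$ enters), and in any case $H^\alpha(C)\subseteq H^2(C')$ for a constant $C'=C'(C)$ since $\|h'\|_\infty\le 1$ forces $\|h''\|_\infty\le\max(2C,4)$.

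The first step is a purely geometric estimate that is the whole reason anisotropy helps. Since the quadratic scaling gives $\delta_s=\delta_\ell^2$ and $|h''|\le C$, the graph of $h$ departs from its tangent line by at most $\tfrac{C}{2}(\delta_\ell)^2=O(\delta_s)$ over any horizontal span of length $O(\delta_\ell)$; as the neighbourhood of $t$ has half-width $\delta_s/2$ and is oriented \emph{along} that tangent, it can meet the graph only when $d(t)=O(\delta_s)$, so $A_h$ lies in an $O(\delta_s)$-tube about a curve of length $\le\sqrt2$ and $|A_h|=O(\delta_s)=O(\sigma^{4/3}|\log\sigma|^{4/3})$. (For isotropic NLM, $\delta_s=\delta_\ell$, the same bound degrades to $|A_h|=O(\delta_\ell)$.) The second step is a concentration lemma for the weights. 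Using $\E\,d^2_{\theta,\delta_s,\delta_\ell}(dY(t),dY(s)) = d^2_{\theta,\delta_s,\delta_\ell}(f_h(t),f_h(s)) + \tfrac{2n_sn_\ell\sigma^2}{\delta_s\delta_\ell}$, the weight $w^{\theta,\delta_s,\delta_\ell}_t(s)$ equals $1$ exactly when $d^2_{\theta,\delta_s,\delta_\ell}(f_h(t),f_h(s))+\eta_t(s)\le\tau_\sigma$, where $\eta_t$ is a centred degree-two Gaussian chaos in $n_sn_\ell-1$ coordinates whose fluctuations, thanks to the logarithmic pixelation $n_sn_\ell=\lceil\log^2\sigma\rceil$, are of order $\tfrac{\sqrt{n_sn_\ell}\,\sigma^2}{\delta_s\delta_\ell}\asymp\tfrac1{|\log\sigma|}$, which is $o(\tau_\sigma)$ for $\tau_\sigma=2/\sqrt{|\log\sigma|}$. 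Standard Gaussian-chaos concentration (Hanson--Wright) plus a union bound over a $\mathrm{poly}(1/\sigma)$-net of comparison points $s$ (legitimate because $s\mapsto d^2_{\theta,\delta_s,\delta_\ell}(dY(t),dY(s))$ has a controlled modulus of continuity at scale $\delta_s/n_s$) then supplies an event $\mathcal G_t$, $\P(\mathcal G^c_t)=O(\sigma^{6})$, on which $\sup_s|\eta_t(s)|\le\tau_\sigma/2$; on $\mathcal G_t$ the averaging set $V_t=\{s:w_t^{\theta,\delta_s,\delta_\ell}(s)=1\}$ is deterministically sandwiched between the sub-level sets $\{d^2_{\theta,\delta_s,\delta_\ell}(f_h(t),f_h(s))\le\tfrac{\tau_\sigma}2\}$ and $\{\le\tfrac{3\tau_\sigma}2\}$, which have comparable measure because $d^2_{\theta,\delta_s,\delta_\ell}(f_h(t),f_h(\cdot))$ grows non-degenerately away from its zero set.

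Given these two steps the bookkeeping is routine. On $\mathcal G_t$ I would bound $\hat f^{O}(t)=|V_t|^{-1}\int_{V_t}X$ through its bias and variance, using $\mathrm{Var}(\int_{V_t}X)\le\sigma^2|V_t|$ (a one-line convolution estimate). On the inactive set $d^2_{\theta,\delta_s,\delta_\ell}(f_h(t),f_h(s))=0$ for every $s$ in the same constant component off an $O(\delta_\ell)$-collar, so $|V_t|=\Theta(1)$, the bias vanishes, and the pointwise risk is $O(\sigma^2)$. On $A_h$ the matching condition forces $V_t$ (off a sliver $\{d(t)\le c\tau_\sigma\delta_s\}$ of measure $o(\delta_s)$) onto the side of the graph containing $t$, killing the bias, while $|V_t|\gtrsim\tau_\sigma^2\delta_\ell\delta_s\asymp\sigma^2|\log\sigma|$ (translate $t$'s own neighbourhood by $O(\tau_\sigma\delta_s)$ across and $O(\tau_\sigma\delta_\ell)$ along the tangent, each of which perturbs $d^2_{\theta,\delta_s,\delta_\ell}$ by $O(\tau_\sigma)$); hence the pointwise risk is $O(1)$, and the sliver contributes $o(\delta_s)$. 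Off $\mathcal G_t$ I would use that the weights define a sub-probability average, so Jensen gives $\hat f^{O}(t)^2\le\sup_{s\in S}X(s)^2$, and since $X$ is a Gaussian field with unit-order pointwise variance and $\mathrm{poly}(\sigma)$ correlation length, $\E[\sup_S X^4]=O(|\log\sigma|^2)$; Cauchy--Schwarz then gives $\E[(\hat f^{O}(t)-f_h(t))^2\mathbf 1_{\mathcal G^c_t}]=O(|\log\sigma|\,\sigma^{3})$ uniformly in $t$. Integrating over $S$: the inactive set contributes $O(\sigma^2)$, the active set $|A_h|\cdot O(1)+o(\delta_s)=O(\sigma^{4/3}|\log\sigma|^{4/3})$, and the bad events $O(|\log\sigma|\,\sigma^3)=o(\sigma^{4/3}|\log\sigma|^{4/3})$, which is the claim.

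I expect the main obstacle to be the second step together with its calibration against the first and third: one must replace the random thresholding of $d^2_{\theta,\delta_s,\delta_\ell}$ by a deterministic one \emph{simultaneously} for all comparison points, with a failure probability small enough to survive multiplication by the (merely poly-logarithmically large) fourth moment of the untruncated estimator, while keeping the threshold slack $\tau_\sigma$ small enough that essentially no points across the edge are falsely matched. This double squeeze is exactly what pins $\eta_t$'s fluctuation to $\Theta(1/|\log\sigma|)$ — forcing $n_sn_\ell\asymp\log^2\sigma$ — and $\tau_\sigma$ to $\Theta(1/\sqrt{|\log\sigma|})$, and it is the $\tau_\sigma$ slack that accounts for the $|\log\sigma|^{4/3}$ gap between this bound and the minimax rate $\sigma^{4/3}$ of Theorem~\ref{thm:minimax}. (Had we truncated $\hat f^{O}$ to $[0,1]$, the pointwise risk off $\mathcal G_t$ would be trivially $\le 1$ and this last balancing act would be unnecessary; the argument shows truncation is inessential.)
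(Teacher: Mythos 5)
Your architecture matches the paper's: the quadratic scaling plus tangent alignment confines the edge-meeting neighborhoods to an $O(\delta_s)$-tube (the paper's $S_2$), which is charged $O(1)$ per point; away from the edge the $\chi^2$ concentration of Lemma~\ref{thm:chisq_conc}, with $n_sn_\ell\asymp\log^2\sigma$ and $\tau_\sigma=2/\sqrt{|\log\sigma|}$, gives a per-comparison-point misclassification probability $O(\sigma^8)$; and the two contributions are balanced exactly as you describe. The implementation differs in one substantive way. You convert the pointwise concentration into a \emph{uniform} sandwich of the averaging set $V_t$ via Hanson--Wright plus a union bound over a net, which obliges you to check that the exponent $n_sn_\ell\tau_\sigma^2\asymp|\log\sigma|$ beats the log-cardinality of a $(\delta_s/n_s)$-net (it does --- $\sigma^{8}$ per point against roughly $\sigma^{-8/3}$ net points --- but not by much, and the modulus-of-continuity/chaining step is real work). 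The paper never needs uniformity: it bounds only $\E\,\lambda(\{s\in P_1:\,w_t(s)=0\})=O(\sigma^8)$ by Fubini and then applies Markov's inequality to the \emph{measure} of misclassified comparison points (Lemma~\ref{lem:lebesgue}), which is weaker but entirely sufficient and avoids the net altogether. Your treatment of the bad event via the fourth moment of the untruncated estimator is actually more careful than the paper's, which simply assumes boundedness of the estimate and truncates to $[0,1]$ near the edge.

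The one step to push back on is the claim that on the inactive set ``the bias vanishes'' and the pointwise risk is $O(\sigma^2)$. A comparison point $s$ lying within $O(\delta_\ell)$ of the edge on the \emph{opposite} side from $t$ has its neighborhood oriented at $\theta_{t_1}$, not $\theta_{s_1}$; since $h'(t_1)$ and $h'(s_1)$ may differ by $\Theta(1)$, that rectangle can straddle the edge in an uncontrolled way, and you have not shown that its noiseless distance to $t$'s monochrome neighborhood exceeds the threshold, so cross-edge false matches from this collar are not excluded. The paper does not attempt to exclude them: it concedes the entire $O(\delta_\ell)$-wide collar $B=S\setminus(P_1\cup P_4)$ as potentially matched and obtains a squared bias of $\bigl(\lambda(B)/\lambda(P_1)\bigr)^2=O(\delta_\ell^2)=O(\sigma^{4/3}|\log\sigma|^{4/3})$ per point (Lemma~\ref{lem:bias}), which, integrated over the $\Theta(1)$-measure far region, is exactly the target rate. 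Your final bound therefore survives once the unproved $O(\sigma^2)$ is replaced by this $O(\delta_\ell^2)$; the same remark applies to the unneeded refinement on the active set, where the trivial $O(1)$ per-point bound over measure $O(\delta_s)$ is all that is required.
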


\noindent The proof of Theorem \ref{thm:oracleANLM} can be simply modified to provide a bound for the risk of NLM as well. Let $\delta = \sqrt{\delta_s \delta_{\ell}}$ and $n= \sqrt{n_s n_{\ell}} = |\log (\sigma)|$, where $\delta_s$, $\delta_{\ell}$, $n_s$, $n_{\ell}$ are as given in OANLM algorithm. 
\medskip

\begin{cor}\label{cor:nlm}
If $\hat{f}^N$ is the NLM estimator with threshold $\tau_{\sigma} = 2/\sqrt{|\log(\sigma)|}$, then
\begin{eqnarray*}
R({H}^{\alpha}(C), \hat{f}^{N}) = O(\sigma |\log \sigma|).
\end{eqnarray*}
\end{cor}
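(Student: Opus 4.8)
The plan is to derive Corollary~\ref{cor:nlm} by rerunning the proof of Theorem~\ref{thm:oracleANLM} with the parameter substitution flagged in the statement: NLM is exactly ANLM with $\theta\equiv 0$, equal side lengths $\delta_s=\delta_\ell=\delta$, and $n_s=n_\ell=n$, where $\delta:=\sqrt{\delta_s^{\mathrm O}\delta_\ell^{\mathrm O}}=2\sqrt2\,\sigma|\log\sigma|=\Theta(\sigma|\log\sigma|)$ and $n:=\sqrt{n_s^{\mathrm O}n_\ell^{\mathrm O}}=|\log\sigma|$, the superscript $\mathrm O$ marking the OANLM choices. Two observations make the transfer clean. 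First, since $\delta^2=\delta_s^{\mathrm O}\delta_\ell^{\mathrm O}$ and $n^2=n_s^{\mathrm O}n_\ell^{\mathrm O}$, the per-neighborhood noise level $\frac{2n^2\sigma^2}{\delta^2}$ is unchanged (it equals $\tfrac14$), so the OANLM threshold $\tau_\sigma=2/\sqrt{|\log\sigma|}$ is still the correct one, which is why the choice of $\tau_\sigma$ in the corollary coincides with that of Theorem~\ref{thm:oracleANLM}. Second, these scalings are independent of $\alpha\ge 1$: substituting the general OANLM widths from the footnote in Section~\ref{ssec:oracle} into $\delta=\sqrt{\delta_s\delta_\ell}$ again gives $\Theta(\sigma|\log\sigma|)$, which is consistent with the bound being claimed over all of ${H}^{\alpha}(C)$.

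As in the proof of Theorem~\ref{thm:oracleANLM}, I would bound $R(f,\hat f^{N})=\E\|f-\hat f^{N}\|_2^2$ by splitting $S=[0,1]^2$ into the \emph{near region} $S_{\mathrm n}$, consisting of points whose (clipped) $\delta$-square neighborhood meets the edge contour $\{t_2=h(t_1)\}$ together with the thin slivers of a constant region too narrow for genuine NLM averaging, and the complementary \emph{far region} $S_{\mathrm f}$. The only quantitative change from the OANLM proof occurs on $S_{\mathrm n}$: because $h\in{\sl H\ddot{o}lder}^1(1)$ has slope at most $1$ and the NLM neighborhood is an isotropic square of side $\delta$ rather than a rectangle of width $\delta_s^{\mathrm O}\ll\delta$, a point lies in $S_{\mathrm n}$ only if its distance to the contour is $O(\delta)$, and since the contour has length $\Theta(1)$ this gives $|S_{\mathrm n}|=O(\delta)=O(\sigma|\log\sigma|)$. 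Projecting $\hat f^{N}$ onto $[0,1]$ (which only decreases the risk) makes the pointwise risk on $S_{\mathrm n}$ at most $O(1)$, so $\int_{S_{\mathrm n}}R(f(t),\hat f^{N}(t))\,dt=O(\sigma|\log\sigma|)$. This term is the bottleneck that costs the improvement of Theorem~\ref{thm:oracleANLM}, where the analogous strip has width only $\Theta(\delta_s^{\mathrm O})=\Theta(\sigma^{4/3}|\log\sigma|^{4/3})$.

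For the far region the OANLM argument applies essentially verbatim, since it only uses that each neighborhood in question is a constant patch of $f$, never its shape. On the good event that every $\delta$-neighborhood distance $d^2_\delta(dY(t),dY(s))$ lies within $\tau_\sigma/2$ of its mean $d^2_\delta(f(t),f(s))+\frac{2n^2\sigma^2}{\delta^2}$---which, as shown in the proof of Theorem~\ref{thm:oracleANLM}, holds with probability $1-O(\sigma^{c})$ for arbitrarily large $c$, because the noise-induced mean is $\Theta(1)$ with fluctuation $\Theta(1/n)=\Theta(1/|\log\sigma|)\ll\tau_\sigma$ while the separation between a value-$0$ and a value-$1$ constant neighborhood equals exactly $1\gg\tau_\sigma$---every far pixel averages $X$ only over same-valued pixels lying a macroscopic distance $\Theta(\delta)\gg\delta/n$ from the contour, hence its estimate is unbiased with variance $O(\sigma^2/|\mathcal R|)$ over its constant region $\mathcal R$. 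Summing over the (two) constant regions bounds $\int_{S_{\mathrm f}}R$ by $O(\sigma^2)$, and the complement of the good event contributes $O(\sigma^{c})$; both are $o(\sigma|\log\sigma|)$. Adding the near- and far-region contributions yields $R({H}^{\alpha}(C),\hat f^{N})=O(\sigma|\log\sigma|)$.

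The only genuinely technical ingredient---and the main obstacle---is the uniform concentration of the family $\{d^2_\delta(dY(t),dY(s))\}$ defining the good event, which needs a covering/continuity argument over the continuum of pixel pairs; but this is already established in the proof of Theorem~\ref{thm:oracleANLM}, and with $n^2=|\log\sigma|^2$ degrees of freedom the Gaussian tail $\exp(-\Theta(n^2\tau_\sigma^2))=\exp(-\Theta(|\log\sigma|))=\sigma^{\Theta(1)}$ still beats a polynomially-sized net. The remaining work is the bookkeeping of which points to assign to $S_{\mathrm n}$ versus $S_{\mathrm f}$---in particular ensuring that every far pixel averages over a non-negligible set---which is where I would spend the most care, but it is routine once the OANLM argument is in hand.
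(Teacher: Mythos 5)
Your proposal is correct and follows essentially the same route as the paper: the paper likewise reruns the OANLM argument with $\delta_s=\delta_\ell=\delta=\Theta(\sigma|\log\sigma|)$ and $n=|\log\sigma|$, collapses the regions to $P_1=S_1^n$, $P_4=S_3^n$, $P_2=P_3=\emptyset$, charges $O(1)$ pointwise risk to the edge strip of measure $O(\delta)$, and bounds the far-region pointwise risk by $O(\sigma^2\log^2\sigma)$, so the strip dominates exactly as in your near/far split. The only small inaccuracy is your claim that far pixels are exactly unbiased with aggregate contribution $O(\sigma^2)$ --- strip pixels whose neighborhoods barely cross the edge can still receive weight one, which is why the paper's bias lemma gives a per-pixel far-region bound of $O(\delta^2)=O(\sigma^2\log^2\sigma)$ --- but this is still $o(\sigma|\log\sigma|)$ and does not affect the conclusion.
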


See Section \ref{sec:proofupnlm} for the proof of this corollary. Also, under certain mild assumptions, the risk of NLM can be lower bounded by $\Omega(\sigma)$. The proof is similar to the proof of Theorem 5 in \cite{MaNaBa11}. However, for the sake of completeness and since our framework differs from \cite{MaNaBa11}, we overview the main steps of the proof in \ref{app:nlm}. 

Theorem \ref{thm:oracleANLM} is based on the strong oracle assumption that the edge direction is known exactly.  Needless to say, such information is rarely available in practical applications. Consider a weaker notion of OANLM that has access to an estimate $\hat{\theta}_{\gamma}$ of $\theta_{\gamma}$ that satisfies
\begin{equation}\label{eqn:thetamismatch}
|\hat{\theta}_{\gamma} - \theta_{\gamma}| \leq \Theta(\sigma^{\beta}). 
\end{equation} 
 OANLM with exact edge orientation information corresponds to $\beta = \infty$ in this model. 
When $\beta < \infty$, the choices $\delta_\ell = 2\sigma^{2/3} |\log\sigma|^{2/3}$ and $\delta_{s} = 4\sigma^{4/3} |\log\sigma|^{4/3}$ are not necessarily optimal. 
The following theorem characterizes the performance of OANLM using $\hat{\theta}$.
\bigskip

\begin{thm}\label{thm:thetamismatch}
\sloppy Let the OANLM estimator use the inaccurate edge orientation $\hat{\theta}_{\gamma}$ satisfying \eqref{eqn:thetamismatch}. Set the neighborhood sizes to $\delta_{\ell} = \min(\sigma^{2/3}| \log^{2/3}\sigma|, \sigma^{1-\beta/2} |\log \sigma|)$ and $\delta_s =\sigma^2\log^2\sigma / \delta_{\ell} $. Then the risk of the estimator with  $\tau_{\sigma} = 2/\sqrt{|\log(\sigma)|}$ satisfies 
\begin{eqnarray*}
R({H}^{\alpha}(C), \hat{f}^{O}) = O(\delta_s {\rm Poly}(|\log \sigma|)),
\end{eqnarray*}
where ${\rm Poly}(| \log \sigma|)$ is a polynomial of degree at most $2$ in terms of $|\log \sigma|$.
\end{thm}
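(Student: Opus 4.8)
The plan is to run the bias--variance argument of the proof of Theorem~\ref{thm:oracleANLM}, paying attention to the single new phenomenon: the angular error $|\hat\theta_\gamma-\theta_\gamma|\le\Theta(\sigma^\beta)$ enlarges the set of pixels whose anisotropic neighborhood crosses the edge contour, and it is the prescribed $\delta_\ell=\min(\sigma^{2/3}|\log^{2/3}\sigma|,\ \sigma^{1-\beta/2}|\log\sigma|)$, $\delta_s=\sigma^2\log^2\sigma/\delta_\ell$ that keeps this set small. Fix a worst-case $f_h\in H^{\alpha}(C)$ and write $R(f_h,\hat f^{O})=\|f_h-\E\hat f^{O}\|_2^2+\E\|\hat f^{O}-\E\hat f^{O}\|_2^2$. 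Partition $S$ into a smooth part $\mathcal A$, the set of points whose $(\hat\theta_{t_1},\delta_s,\delta_\ell)$-neighborhood lies entirely in the white region or entirely in the black region, and a transition band $\mathcal B$ containing the rest. The geometric core is the bound $|\mathcal B|=O(\delta_s\,{\rm Poly}(|\log\sigma|))$: a point at perpendicular distance $d$ from the contour has a straddling neighborhood only when $d$ is at most $\delta_s/2$ plus the contour's bending over a length-$\delta_\ell$ window ($O(C\delta_\ell^2)$) plus the tilt caused by the angular mismatch ($O(\sigma^\beta\delta_\ell)$); the chosen $\delta_\ell$ is precisely the largest value for which both correction terms are $O(\delta_s)$ subject to $\delta_s\delta_\ell=\sigma^2\log^2\sigma$.

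Second, I would control the noise in the neighborhood distances exactly as in the proof of Theorem~\ref{thm:oracleANLM}. Define a high-probability event $\mathcal E$ on which, uniformly over a sufficiently fine net of pairs of points, the centered part of $d^2_{\hat\theta,\delta_s,\delta_\ell}(dY(t),dY(s))$ stays within a fixed fraction of $\tau_\sigma$ of its mean; since $n_sn_\ell=\lceil\log^2\sigma\rceil$ and $\tau_\sigma=2/\sqrt{|\log\sigma|}$, a $\chi^2$ tail bound makes $\P(\mathcal E^{c})$ smaller than any power of $\sigma$, so $\mathcal E^{c}$ contributes negligibly to the risk. On $\mathcal E$ the weights behave as intended, using the identity $\E d^2_{\hat\theta,\delta_s,\delta_\ell}(dY(t),dY(s))=d^2_{\hat\theta,\delta_s,\delta_\ell}(f(t),f(s))+\tfrac{2n_sn_\ell\sigma^2}{\delta_s\delta_\ell}$: two points whose neighborhoods lie on the same side of the contour get weight $1$, and two points deep on opposite sides get weight $0$.

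Third, I would bound the risk on $\mathcal A$ and on $\mathcal B$ separately on the event $\mathcal E$. On $\mathcal A$ the image is locally constant, so bias can only arise from ``ambiguous'' wrong-color pixels whose $\hat\theta_{t_1}$-rotated neighborhood looks like the right color; such pixels lie within perpendicular distance $O(\tau_\sigma\delta_\ell)$ of the contour, so --- using that $\delta_\ell\le\sigma^{2/3}|\log^{2/3}\sigma|$ forces $\delta_\ell^2\le\delta_s$ --- their squared-bias contribution is $O(\tau_\sigma^2\delta_\ell^2)=O(\delta_s)$, while the variance is $O(\sigma^2\,{\rm Poly}(|\log\sigma|))$ because each estimate averages the pixelated process over a region of measure $\Theta(1)$ and $\sigma^2\ll\delta_s$. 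On $\mathcal B$, squared bias is at most $1$ and the variance is $O(\sigma^2/(\tau_\sigma\delta_s))=o(1)$ because the averaging set still has measure $\gtrsim\tau_\sigma\delta_s>0$; integrating this $O(1)$ integrand over $|\mathcal B|=O(\delta_s\,{\rm Poly}(|\log\sigma|))$ gives $O(\delta_s\,{\rm Poly}(|\log\sigma|))$. Adding the two contributions and the $\P(\mathcal E^{c})$ term yields the claimed rate, and since the thresholds, neighborhood sizes, and net are chosen uniformly in $h$, the bound is uniform over $H^{\alpha}(C)$.

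The step I expect to be the main obstacle is the bias bound on $\mathcal A$: one must verify that a pixel far from the contour is not corrupted by a wrong-color pixel whose neighborhood --- rotated by the \emph{estimator's} angle $\hat\theta_{t_1}$, which for a distant column may differ from the true local edge orientation by an $O(1)$ amount --- nonetheless resembles the correct color. Making this quantitative requires tracking the interplay among the mismatch exponent $\beta$, the curvature constant $C$, and the exact form of $\delta_\ell$; the remainder is a quantitative rerun of the proof of Theorem~\ref{thm:oracleANLM}.
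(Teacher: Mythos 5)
Your proposal is correct and follows essentially the same route as the paper: the paper likewise reruns the proof of Theorem~\ref{thm:oracleANLM} after widening the transition band to $S_2^{\beta}$ of width $O(c_{\beta}\sigma^{\beta}\delta_{\ell}+\delta_s+(C/2)\delta_{\ell}^2)$ --- exactly your three-term accounting of $\delta_s$, curvature $O(C\delta_\ell^2)$, and tilt $O(\sigma^{\beta}\delta_\ell)$ --- shows each term is $O(\delta_s\,{\rm Poly}(|\log\sigma|))$ under the prescribed $\delta_\ell$, bounds the per-point risk off the band as in the oracle case, and charges $O(1)$ per point on the band. The obstacle you flag (wrong-color pixels with misaligned neighborhoods contaminating a far pixel) is handled in the paper exactly as in Theorem~\ref{thm:oracleANLM}, since $P_1$ and $P_4$ are kept unchanged and deep pixels have monochrome neighborhoods regardless of orientation.
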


If the edge estimate is exact, then the result simplifies to the result of Theorem \ref{thm:oracleANLM}. However, this theorem confirms that OANLM algorithm achieves the optimal rate even if there is an error in $\hat{\theta}$ of order $O(\sigma^{\beta})$ with $\beta \geq 2/3$. 
\bigskip

\begin{cor}\label{cor:oracANLM}
Let the OANLM estimator use the inaccurate edge orientation $\hat{\theta}_{\gamma}$ satisfying \eqref{eqn:thetamismatch} with $\beta \geq 2/3$. Set the neighborhood sizes to $\delta_{\ell} =  2\sigma^{2/3} |\log^{2/3} \sigma|$ and $\delta_s =4\sigma^{4/3} |\log \sigma| $. Then, the risk of the estimator with  $\tau_{\sigma} = 2/\sqrt{|\log(\sigma)|}$ satisfies 
\begin{eqnarray*}
R({H}^{\alpha}(C), \hat{f}^{O}) = O( \sigma^{4/3} {\rm Poly}( |\log \sigma |)),
\end{eqnarray*}
where ${\rm Poly}( |\log \sigma |)$ is a polynomial of degree at most 4 in terms of $|\log( \sigma)|$. 
\end{cor}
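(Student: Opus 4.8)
The plan is to read Corollary \ref{cor:oracANLM} off Theorem \ref{thm:thetamismatch} by specializing to the range $\beta \ge 2/3$, so that the whole argument reduces to bookkeeping of exponents and logarithms. The key observation is that $\beta \ge 2/3$ is exactly the inequality $1 - \beta/2 \le 2/3$; hence for all $\sigma$ below some $\sigma_0$ one has $\sigma^{1-\beta/2} \ge \sigma^{2/3}$, and carrying along the logarithmic prefactors, $\sigma^{2/3}|\log\sigma|^{2/3} \le \sigma^{1-\beta/2}|\log\sigma|$ for $\sigma < \sigma_0$ (when $\beta = 2/3$ this is just $|\log\sigma|^{2/3} \le |\log\sigma|$; when $\beta > 2/3$ the strictly smaller power of $\sigma$ in the right-hand term decides it). Therefore the minimum defining $\delta_{\ell}$ in Theorem \ref{thm:thetamismatch} is attained at its first argument, giving $\delta_{\ell} = \Theta(\sigma^{2/3}|\log\sigma|^{2/3})$ and $\delta_s = \sigma^2\log^2\sigma/\delta_{\ell} = \Theta(\sigma^{4/3}|\log\sigma|^{4/3})$ --- which are, up to constants and a single factor of $|\log\sigma|^{1/3}$, the sizes $\delta_{\ell} = 2\sigma^{2/3}|\log^{2/3}\sigma|$ and $\delta_s = 4\sigma^{4/3}|\log\sigma|$ prescribed in the corollary.

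With this identification in hand, I would invoke Theorem \ref{thm:thetamismatch}, which yields $R({H}^{\alpha}(C), \hat{f}^{O}) = O(\delta_s\,{\rm Poly}(|\log\sigma|))$ with ${\rm Poly}$ of degree at most $2$, and substitute $\delta_s = \Theta(\sigma^{4/3}|\log\sigma|^{4/3})$ to obtain $R = O(\sigma^{4/3}|\log\sigma|^{4/3+2}) = O(\sigma^{4/3}|\log\sigma|^{10/3})$. Since $10/3 \le 4$, this is $O(\sigma^{4/3}\,{\rm Poly}(|\log\sigma|))$ with ${\rm Poly}$ of degree at most $4$, which is the claim; the bound is deliberately loose, which is why the statement only says ``at most $4$''. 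As a consistency check, letting $\beta = \infty$ deletes the second argument of the $\min$ and recovers Theorem \ref{thm:oracleANLM}.

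The one step that is not purely mechanical, and hence the main (mild) obstacle, is the appeal to Theorem \ref{thm:thetamismatch}: that theorem is stated for the exact choice $\delta_s = \sigma^2\log^2\sigma/\delta_{\ell}$, whereas the corollary's $\delta_s$ differs from it by a bounded power of $|\log\sigma|$. I would resolve this by noting that the proof of Theorem \ref{thm:thetamismatch} uses $\delta_s$ and $\delta_{\ell}$ only through their orders --- the variance contributes at order $\sigma^2/(\delta_s\delta_{\ell})$, and the bias is governed by $\delta_s$ together with the angular error $\Theta(\sigma^{\beta})$ of \eqref{eqn:thetamismatch} and the curvature of the contour --- so perturbing $\delta_s$ by a bounded polylog factor perturbs every intermediate estimate, and therefore the final bound, by at most a bounded polylog factor; equivalently, one re-runs verbatim the few lines of that proof that mention $\delta_s$ and $\delta_{\ell}$ with the corollary's values. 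Either way the degree of the polynomial stays below $4$ after the substitution above, and nothing else in the argument needs to change.
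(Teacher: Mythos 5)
Your proposal is correct and follows essentially the same route as the paper: the paper's own proof simply plugs $\beta \geq 2/3$ into Theorem \ref{thm:thetamismatch}, observes that the minimum defining $\delta_{\ell}$ is attained at its first argument so that $\delta_{\ell} = 2\sigma^{2/3}|\log\sigma|^{2/3}$ and $\delta_s = 4\sigma^{4/3}|\log\sigma|^{4/3}$, and reads off the bound. Your extra remark about the mismatch between the corollary's stated $\delta_s = 4\sigma^{4/3}|\log\sigma|$ and the formula $\delta_s = \sigma^2\log^2\sigma/\delta_{\ell}$ is well taken (the paper's proof itself silently uses the exponent $4/3$, so this appears to be a typo in the statement), and your robustness argument resolves it cleanly.
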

\begin{proof}
If we plug in $\beta \geq 2/3$ in Theorem \ref{thm:thetamismatch} we obtain
\[
\delta_{\ell} = \min(2\sigma^{2/3}| \log^{2/3}\sigma|,2 \sigma^{1-\beta/2} |\log \sigma|) =2 \sigma^{2/3} |\log \sigma|^{2/3}.
\]
Therefore, $\delta_s = 4 \sigma^{4/3} |\log \sigma|^{4/3}$. This establishes the result. 
\end{proof}

We can also simplify the result of Theorem \ref{thm:thetamismatch} for the case of $\beta< 2/3$.

\begin{cor}\label{cor:oracANLM}
Let the OANLM estimator use the inaccurate edge orientation $\hat{\theta}_{\gamma}$ satisfying \eqref{eqn:thetamismatch} with $\beta \leq 2/3$. Set the neighborhood sizes to $\delta_{\ell} = 2 \sigma^{1-\beta/2} |\log \sigma|$ and $\delta_s =4\sigma^{1+ \beta/2} |\log \sigma| $. Then, the risk of the estimator with  $\tau_{\sigma} = 2/\sqrt{|\log(\sigma)|}$ satisfies 
\begin{eqnarray*}
R({H}^{\alpha}(C), \hat{f}^{O}) = O( \sigma^{1+\beta/2} {\rm Poly}( |\log \sigma |)).
\end{eqnarray*}
where ${\rm Poly}( |\log \sigma |)$ is a polynomial of degree at most 3 in terms of $|\log( \sigma)|$. 
\end{cor}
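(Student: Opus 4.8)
The plan is to obtain this corollary directly from Theorem~\ref{thm:thetamismatch}, in exactly the same way the $\beta\ge 2/3$ corollary was obtained: substitute the hypothesis $\beta\le 2/3$ into the expressions for $\delta_\ell$ and $\delta_s$ given in Theorem~\ref{thm:thetamismatch}, identify which branch of the $\min$ defining $\delta_\ell$ is active, and then simplify the resulting risk bound.

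First I would evaluate $\delta_\ell=\min\!\big(\sigma^{2/3}|\log\sigma|^{2/3},\ \sigma^{1-\beta/2}|\log\sigma|\big)$. Since $\sigma\to 0$ and $\beta\le 2/3$ gives $1-\beta/2\ge 2/3$, the ratio of the two candidates is
\[
\frac{\sigma^{1-\beta/2}|\log\sigma|}{\sigma^{2/3}|\log\sigma|^{2/3}}=\sigma^{1/3-\beta/2}|\log\sigma|^{1/3},
\]
which tends to $0$ for every $\beta<2/3$ (and reduces to the boundary situation already covered by the preceding corollary when $\beta=2/3$). Hence for $\beta<2/3$ the second term attains the minimum and $\delta_\ell=\Theta\!\big(\sigma^{1-\beta/2}|\log\sigma|\big)$, matching the neighborhood length $2\sigma^{1-\beta/2}|\log\sigma|$ stated in the corollary up to the displayed constant. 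Consequently $\delta_s=\sigma^2\log^2\sigma/\delta_\ell=\Theta\!\big(\sigma^{1+\beta/2}|\log\sigma|\big)$, matching the stated width $4\sigma^{1+\beta/2}|\log\sigma|$ up to a constant.

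Finally I would plug this $\delta_s$ into the bound $R(H^{\alpha}(C),\hat f^{O})=O\!\big(\delta_s\,\mathrm{Poly}(|\log\sigma|)\big)$ of Theorem~\ref{thm:thetamismatch}, where the polynomial there has degree at most $2$. Multiplying the degree-$1$ logarithmic factor in $\delta_s$ by this polynomial yields $R(H^{\alpha}(C),\hat f^{O})=O\!\big(\sigma^{1+\beta/2}\,\mathrm{Poly}(|\log\sigma|)\big)$ with a polynomial of degree at most $3$, which is the claim. There is no real obstacle here: the only step needing a moment of care is the comparison inside the $\min$, where one must track both the power-of-$\sigma$ and the power-of-$|\log\sigma|$ factors to confirm that the $\sigma^{1-\beta/2}|\log\sigma|$ branch is the active one on the whole range $\beta<2/3$ and that the endpoint $\beta=2/3$ is consistent with the companion corollary; everything else is absorbed into the $\Theta$ and $O$ notation.
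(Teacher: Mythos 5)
Your proposal is correct and follows essentially the same route as the paper's own proof: substitute $\beta \le 2/3$ into Theorem~\ref{thm:thetamismatch}, observe that the $\sigma^{1-\beta/2}|\log\sigma|$ branch of the $\min$ is active, and read off the bound $O(\delta_s\,\mathrm{Poly}(|\log\sigma|))$ with the extra logarithmic factor from $\delta_s$ raising the polynomial degree to $3$. Your explicit ratio check for which branch of the $\min$ wins is slightly more careful than the paper's one-line assertion, but the argument is the same.
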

\begin{proof}
Note that this corollary is exactly the same as Theorem \ref{thm:thetamismatch}. In fact, since $\beta\leq 2/3$, 
\[
\delta_{\ell} = \min(\sigma^{2/3}| \log^{2/3}\sigma|, \sigma^{1-\beta/2} |\log \sigma|) = \sigma^{1-\beta/2} |\log \sigma|.
\]
Therefore, $\delta_s = \sigma^{1+ \beta/2} |\log \sigma|$ and therefore, according to Theorem \ref{thm:thetamismatch} 
\begin{eqnarray*}
R({H}^{\alpha}(C), \hat{f}^{O})  = O(\sigma_s {\rm Poly}( |\log \sigma |))=O( \sigma^{1+\beta/2} {\rm Poly}( |\log \sigma |)).
\end{eqnarray*}
\end{proof}

This corollary suggests that, as long as we have an edge orientation estimate that improves as $\sigma \rightarrow 0$ (or the number of pixels increases), OANLM outperforms NLM.  Also note that, as $\beta$ decreases, the neighborhoods become more isotropic.

\subsection{Discrete angle ANLM}

In this section, we introduce the Discrete-angle ANLM (DANLM) algorithm that achieves the minimax rate without oracle information. The key idea is to calculate the neighborhood distance over several directional neighborhoods and fuse them to obtain a similarity measure that works well for all directions. As for OANLM, set $\delta_s =4 \sigma^{4/3} |\log(\sigma)|^{4/3}$ and $\delta_{\ell} = 2\sigma^{2/3} |\log(\sigma)|^{2/3}$, and let $q \triangleq \pi \sigma^{-2/3}$. Define the angles $\theta_0 \triangleq 0$, $\theta_1 \triangleq \sigma^{2/3}$, $\theta_2 \triangleq 2\sigma^{2/3}, \ldots, \theta_q \triangleq \pi - \sigma^{2/3}$ and 
\[
\mathcal{O} \triangleq \{\theta_0, \ldots, \theta_q \}.
\]
 For a point $(u,v)$ we consider all of the anisotropic, directional neighborhoods for $\theta \in \mathcal{O}$. Figure 7 displays four of these neighborhoods.

\begin{figure} \label{fig:ANLMneighborhood}
\begin{center}
  \includegraphics[width=6.05cm]{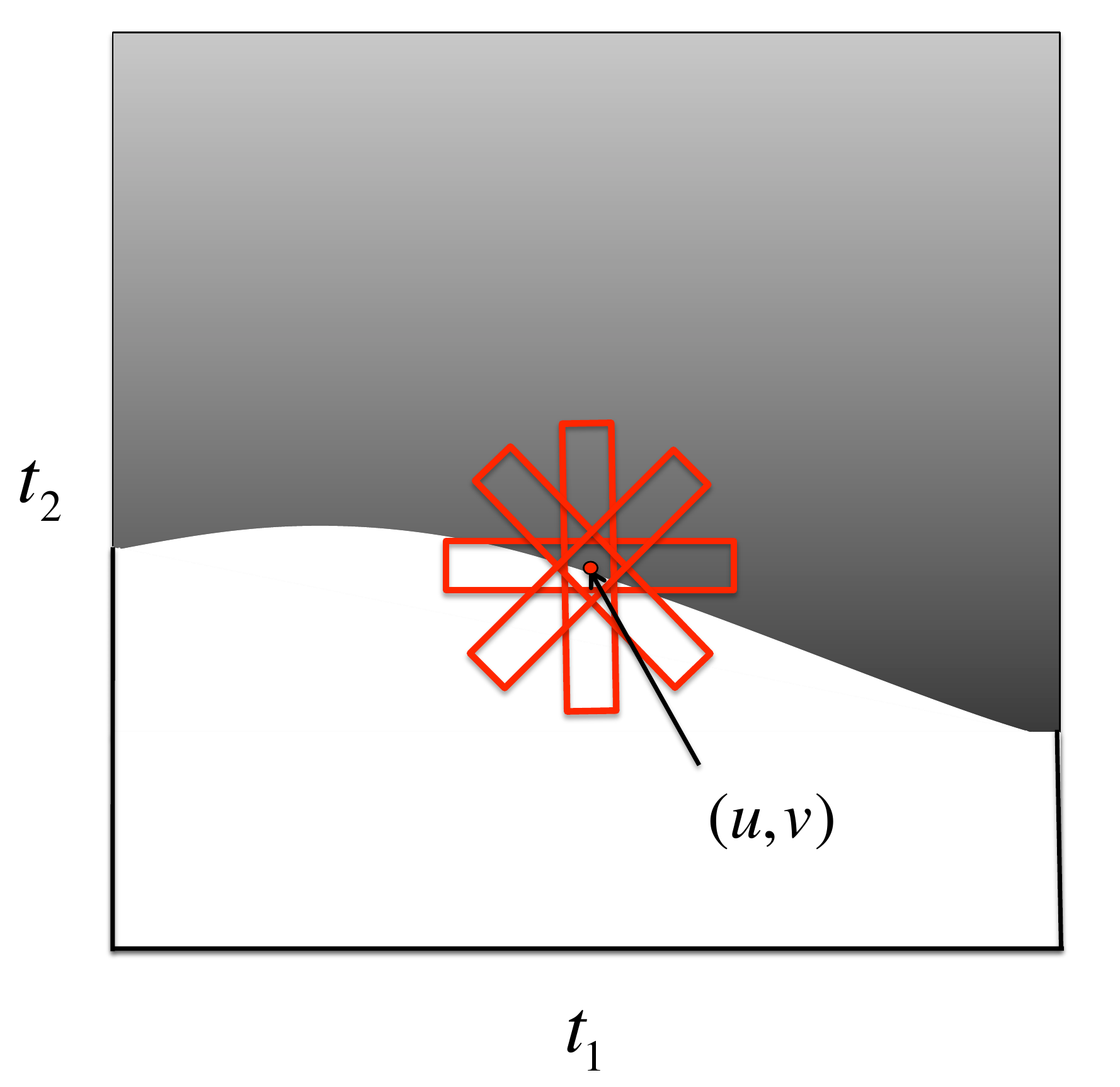}\\
  \caption{DANLM neighborhoods around one pixel location for $q = 4$.}
  \end{center}
\end{figure} 

\sloppy
Define the discrete angle anisotropic distance between $dY(t_1,t_2)$ and $dY(s_1,s_2)$ as
\begin{equation*}
d_{\mathcal{A}}^2 (dY(t_1,t_2), dY(s_1,s_2)) \triangleq \min_{\theta \in \mathcal{O}} d_{\theta, \delta_s, \delta_{\ell}}^2 (dY(t_1,t_2), dY(s_1,s_2)),
\end{equation*}
where $ d_{\theta, \delta_s, \delta_{\ell}}^2 (dY(t_1,t_2), dY(s_1,s_2))$ is defined in \eqref{eq:direcdistance}. The DANLM estimate is then given by 
\begin{eqnarray*}
\hat{f}^{D}(t_1,t_2) = \frac{\int_{(s_1,s_2) \in S} w^{AN}_{t_1,t_2}(s_1,s_2)X(s_1,s_2) ds_1 ds_2 }{ \int_{(s_1,s_2) \in S} w^{AN}_{t_1,t_2}(s_1,s_2) ds_1 ds_2 }.
\end{eqnarray*}
where 
\begin{eqnarray}
w^{AN}_{t_1,t_2}(s_1,s_2) =\! \! \left\{\begin{array}{rl}
 1 &  \mbox{ if $d^2_{\mathcal{A}} (dY(t_1,t_2),dY(s_1,s_2)) \leq \frac{2n_sn_{\ell}\sigma^2}{\delta_s \delta_{\ell}}+ \tau_{\sigma} $,}\\
 0 &   \mbox{ otherwise.}
\end{array}\right.
\end{eqnarray}

In summary, we note the following features of the DANLM algorithm.  First, it uses the quadratic scaling $\delta_s = \delta_{\ell}^2$.  Second, the optimal neighborhood direction can change from pixel to pixel. The following theorem, proved in Section \ref{sec:proofs}, shows that the risk of this algorithm is within the logarithmic factor of the optimal minimax risk.\bigskip

\begin{thm} \label{thm:ANLM}
The risk of DANLM satisfies
\[
R(H^{\alpha} (C_{\alpha}), \hat{f}^{D}) \leq O( \sigma^{4/3} |\log(\sigma)|^{4/3}).
\]
\end{thm}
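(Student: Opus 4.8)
The plan is to bound $R(H^\alpha(C),\hat f^{D})$ by cutting the domain $S=[0,1]^2$ into a thin tube around the edge contour $\{(t_1,h(t_1))\}$ and its complement, and controlling bias and variance separately on each piece, uniformly over $f_h\in H^\alpha(C)$. The organising observation is that the quadratic scaling $\delta_s=\delta_\ell^{2}$ makes three error scales coincide up to lower‑order factors: the half‑width $\delta_s/2$ of a directional neighbourhood, the deviation $O(C\delta_\ell^{2})=O(\delta_s)$ of the $\alpha=2$ contour from its tangent over a window of length $\delta_\ell$, and the normal displacement $O(\delta_\ell\sigma^{2/3})=o(\delta_s)$ caused by rotating a $\delta_s\times\delta_\ell$ rectangle by one grid step $\sigma^{2/3}$. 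Hence at every pixel $(t_1,t_2)$ the grid angle $\theta^{\ast}\in\mathcal O$ nearest $\theta_{t_1}$ produces a neighbourhood that, modulo an $O(\delta_s)$ perturbation, behaves like the oracle‑aligned neighbourhood of Theorem~\ref{thm:oracleANLM}; in other words DANLM always has available the neighbourhood of the $\beta=2/3$ regime of Theorem~\ref{thm:thetamismatch}, so the corollary of that theorem already makes the polynomial rate plausible, and what remains is to show that taking the \emph{minimum} over all $q+1=\Theta(\sigma^{-2/3})$ angles does not spoil it — and in fact to extract the sharp exponent $4/3$ on $|\log\sigma|$ directly from the tube.

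The first step would be a geometric ``confinement'' lemma. Writing $d(t)$ for the distance from $(t_1,t_2)$ to the edge and $\rho_\theta(t)\in[0,1]$ for the fraction of its $(\theta,\delta_s,\delta_\ell)$‑neighbourhood lying on the side of the edge opposite to $f_h(t_1,t_2)$, the fact that this neighbourhood is a $\delta_s\times\delta_\ell$ rectangle forces $\max_\theta\rho_\theta(t)\le \tfrac12-c\,d(t)/\delta_\ell$ for a constant $c>0$. Consequently, for two pixels of opposite value and any direction,
\[
d^{2}_{\theta,\delta_s,\delta_\ell}\big(f_h(t_1,t_2),f_h(s_1,s_2)\big)\ge 1-\rho_\theta(t)-\rho_\theta(s)\ge\frac{c\,(d(t)+d(s))}{\delta_\ell},
\]
and the same bound holds after minimising over $\theta\in\mathcal O$. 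Since $\E\,d^{2}_{\theta,\delta_s,\delta_\ell}(dY(t),dY(s))$ equals this signal distance plus the deterministic offset $2n_s n_\ell\sigma^{2}/(\delta_s\delta_\ell)=\Theta(1)$, the weight $w^{AN}_{t}(s)$ can equal $1$ only when the fluctuation of $\min_\theta d^{2}_{\theta,\delta_s,\delta_\ell}$ about its mean drops below $\tau_\sigma-c(d(t)+d(s))/\delta_\ell$. Each $d^{2}_{\theta,\delta_s,\delta_\ell}(dY(t),dY(s))$ is $(n_s n_\ell-1)^{-1}$ times a (noncentral) $\chi^{2}$‑type sum over $n_s n_\ell-1=\Theta(|\log\sigma|^{2})$ cells, so it concentrates with sub‑exponential fluctuations of scale $\Theta(1/|\log\sigma|)$, a factor $\sqrt{|\log\sigma|}$ below $\tau_\sigma=2/\sqrt{|\log\sigma|}$; a downward excursion of order $\tau_\sigma$ therefore has probability $\exp(-\Theta(|\log\sigma|))=\sigma^{\Omega(1)}$ for each fixed triple $(\theta,t,s)$, and the union bound over the $q+1=\Theta(\sigma^{-2/3})$ grid angles is absorbed with room to spare. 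I would conclude that, outside an event of negligible probability, $\hat f^{D}(t)$ averages no opposite‑value pixel once $d(t)\gtrsim\tau_\sigma\delta_\ell$, and that for $d(t)=d\in(\Theta(\delta_s),\tau_\sigma\delta_\ell)$ the only opposite‑value pixels it averages lie within $O(\tau_\sigma\delta_\ell-d)$ of the contour.

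With this in hand I would split $S$ into the core $\{d(t)\le K\delta_s\}$, the annulus $\{K\delta_s<d(t)\le\tau_\sigma\delta_\ell\}$, and the exterior $\{d(t)>\tau_\sigma\delta_\ell\}$. On the exterior the confinement lemma gives zero bias up to a negligible event, while the weight‑one set has measure $\Omega(1)$ — it contains every far pixel of the same value, whose all‑one‑value neighbourhood matches that of $(t_1,t_2)$ exactly — so the variance bound already used for OANLM/NLM gives variance $O(\sigma^{2})$, and integrating over $S$ contributes $O(\sigma^{2})=o(\sigma^{4/3}|\log\sigma|^{4/3})$. On the core, where even the best‑aligned grid neighbourhood straddles the edge, I would use only $0\le\hat f^{D},f_h\le1$, so the pointwise risk is $\le1$; since $h$ is $1$‑Lipschitz the contour has length $O(1)$ and the core has area $O(\delta_s)$, contributing $O(\delta_s)=O(\sigma^{4/3}|\log\sigma|^{4/3})$ — the dominant term, and the source of the $|\log\sigma|^{4/3}$ factor. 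On the annulus the denominator is $\Omega(1)$ and the numerator is at most the measure $O(\tau_\sigma\delta_\ell-d)$ of confinable opposite‑value pixels, so the squared bias is $O((\tau_\sigma\delta_\ell-d)^{2})$; integrating in $d$ (and over the $O(1)$‑length contour) gives $O((\tau_\sigma\delta_\ell)^{3})=o(\sigma^{4/3}|\log\sigma|^{4/3})$, while the variance is again $O(\sigma^{2})$. Summing the three contributions yields $R(H^\alpha(C),\hat f^{D})=O(\sigma^{4/3}|\log\sigma|^{4/3})$.

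The hard part will not be the geometry but the probabilistic bookkeeping behind the two claims just used — that for (Lebesgue‑)almost every $t$ \emph{simultaneously}, $\hat f^{D}(t)$ has a weight‑one set of measure $\Omega(1)$ and carries essentially no opposite‑value mass. The second requires the sub‑exponential tail estimate to hold uniformly over a continuum of pixel pairs and grid directions; I would handle this exactly as in the proof of Theorem~\ref{thm:thetamismatch}, by discretising $S$ on a polynomially fine lattice and invoking the modulus of continuity of the Wiener sheet together with the Lipschitz dependence of the signal distances on pixel position, the only new ingredient being the factor $q+1=\Theta(\sigma^{-2/3})$, which we saw costs nothing. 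The first, and the accompanying variance estimate, demands control of the correlation between the random weights $w^{AN}_{t}(s)$ and the process $X(s)$ they multiply — the familiar crux of every nonlocal‑means analysis — which I would dispatch by conditioning on the weights and re‑using the concentration lemmas established for OANLM. Finally, two technical points need care: the absolute constants ($\delta_s$, $\delta_\ell$, the core radius $K\delta_s$, the offset $\Theta(1)$) carry an implicit dependence on the H\"older constant $C$, which rescales but does not change the order of the bound; and pixels within $O(\delta_\ell)$ of $\partial S$, where neighbourhoods are truncated by intersection with $S$, must be handled with the same truncation convention throughout, so that the confinement lemma — which needs each pixel's own same‑value region to be thick enough — remains valid there.
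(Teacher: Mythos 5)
Your proposal is correct and follows essentially the same route as the paper's proof: the paper partitions $S$ into a tube $S_2$ of width $\Theta(\delta_s)$ around the contour (your core, absorbing the dominant $O(\delta_s)$ term with pointwise risk $\le 1$), intermediate strips $P_2,P_3$ of width $\Theta(\delta_\ell)$ (your annulus), and far regions $P_1,P_4$ (your exterior), and it handles the minimum over angles exactly as you do — the nearest grid angle (within $\sigma^{2/3}$ of the tangent, hence a normal perturbation $o(\delta_s)$) supplies an edge-avoiding neighbourhood for the weight-one lower bound, while a geometric lower bound on the signal distance for opposite-value pixels plus the $\chi^2$ concentration of Lemma~\ref{thm:chisq_conc} and a union bound over the $q=\Theta(\sigma^{-2/3})$ angles kills the opposite-side mass. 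The only substantive differences are cosmetic or harmless: the paper controls the bad sets by Fubini plus Markov on their expected Lebesgue measure rather than your uniform discretisation of $S$, and its cruder bias/variance bounds on the strip (both $O(\delta_\ell^2)=O(\sigma^{4/3}|\log\sigma|^{4/3})$, rather than your $o(\cdot)$ and $O(\sigma^2)$ claims, which are slightly optimistic once the weight--noise correlation on the strip is accounted for) already land on the stated rate.
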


Figure \ref{fig:ONLMvsNLM}  reprises the simulation experiment of Figure \ref{fig:tuning} using the four algorithms described thus far:  NLM, OANLM with perfect knowledge of the edge orientation, OANLM with imperfect knowledge of the edge orientation, and DANLM with four angles.  All three of the latter algorithms outperform the isotropic NLM.

\begin{figure}[!t]
	\captionsetup[subfigure]{justification=centering,labelformat=parens}
	\centering
	\subfloat[NLM, ${\rm PSNR}= 26$ dB]{\label{fig:hor_ganlm}
	\includegraphics*[width = 0.41\textwidth, height = 0.41\textwidth]{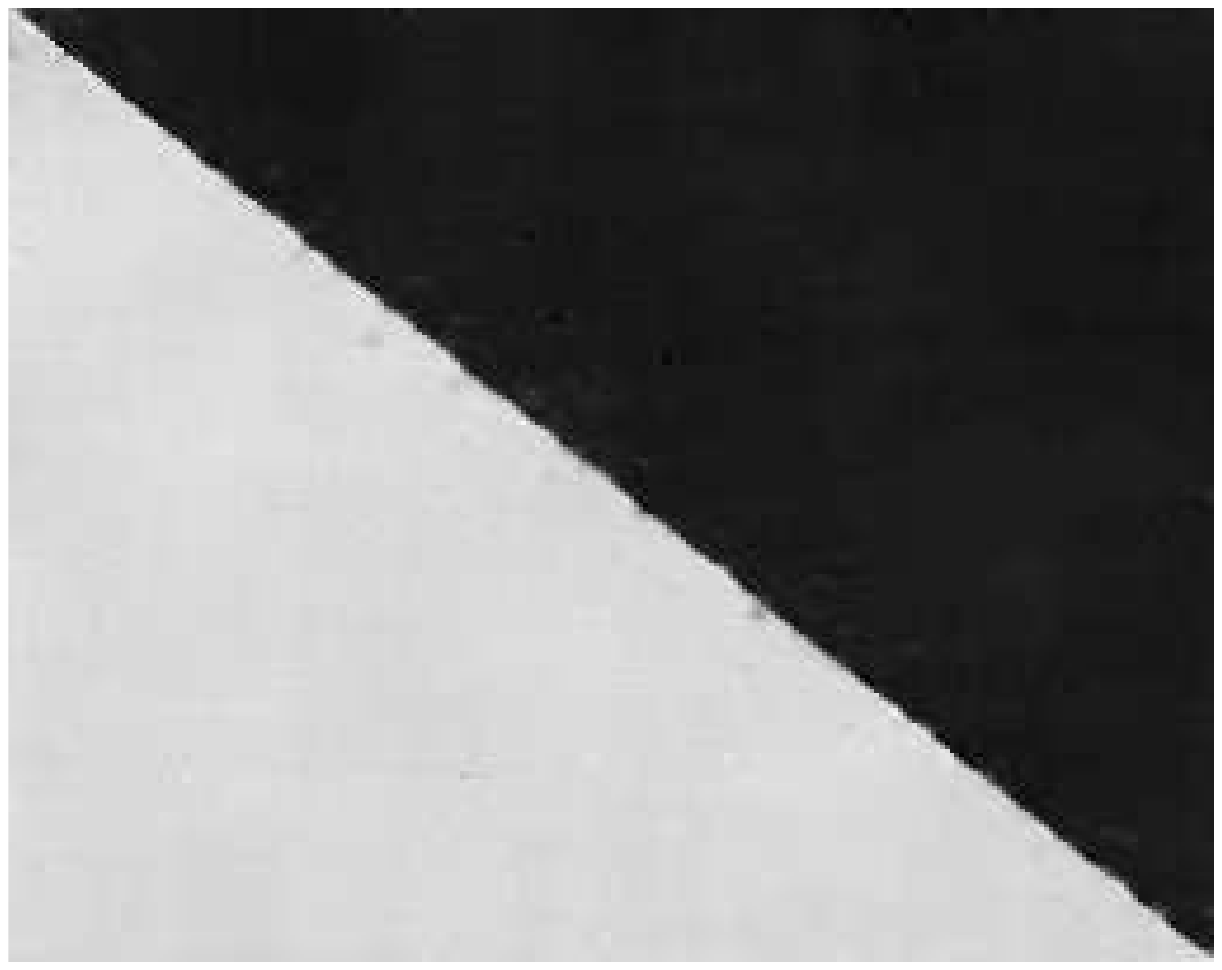}}
%
	\subfloat[OANLM with no error, \protect\\ ${\rm PSNR}= 28.9$ dB]{\label{fig:hor2} 
	\includegraphics*[width = 0.40\textwidth, height = 0.40\textwidth]{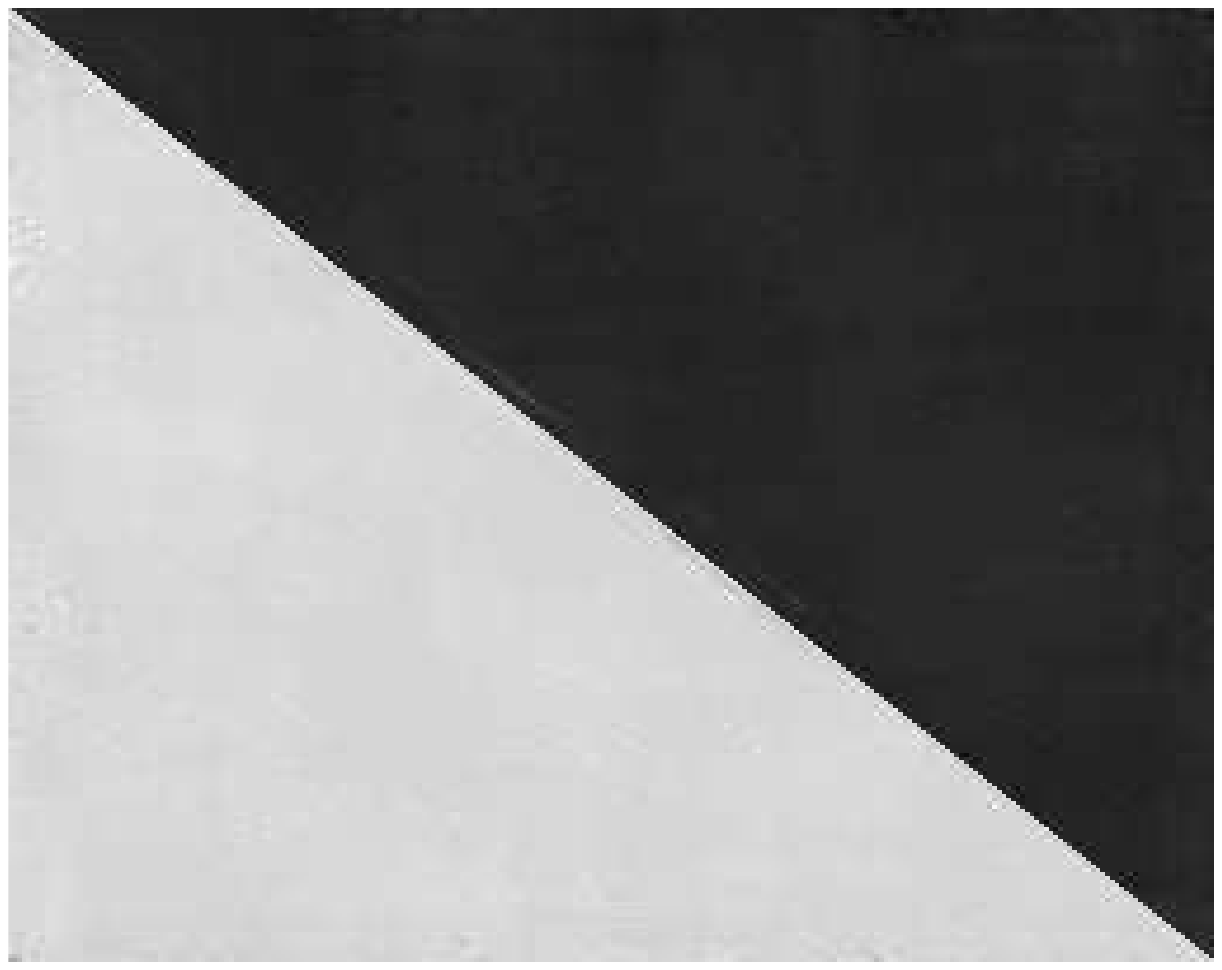}}
	
	\subfloat[ONALM with error, \protect\\ ${\rm PSNR}= 28.6$ dB]{\label{fig:hor_nlm}
	\includegraphics*[width = 0.40\textwidth, height = 0.40\textwidth]{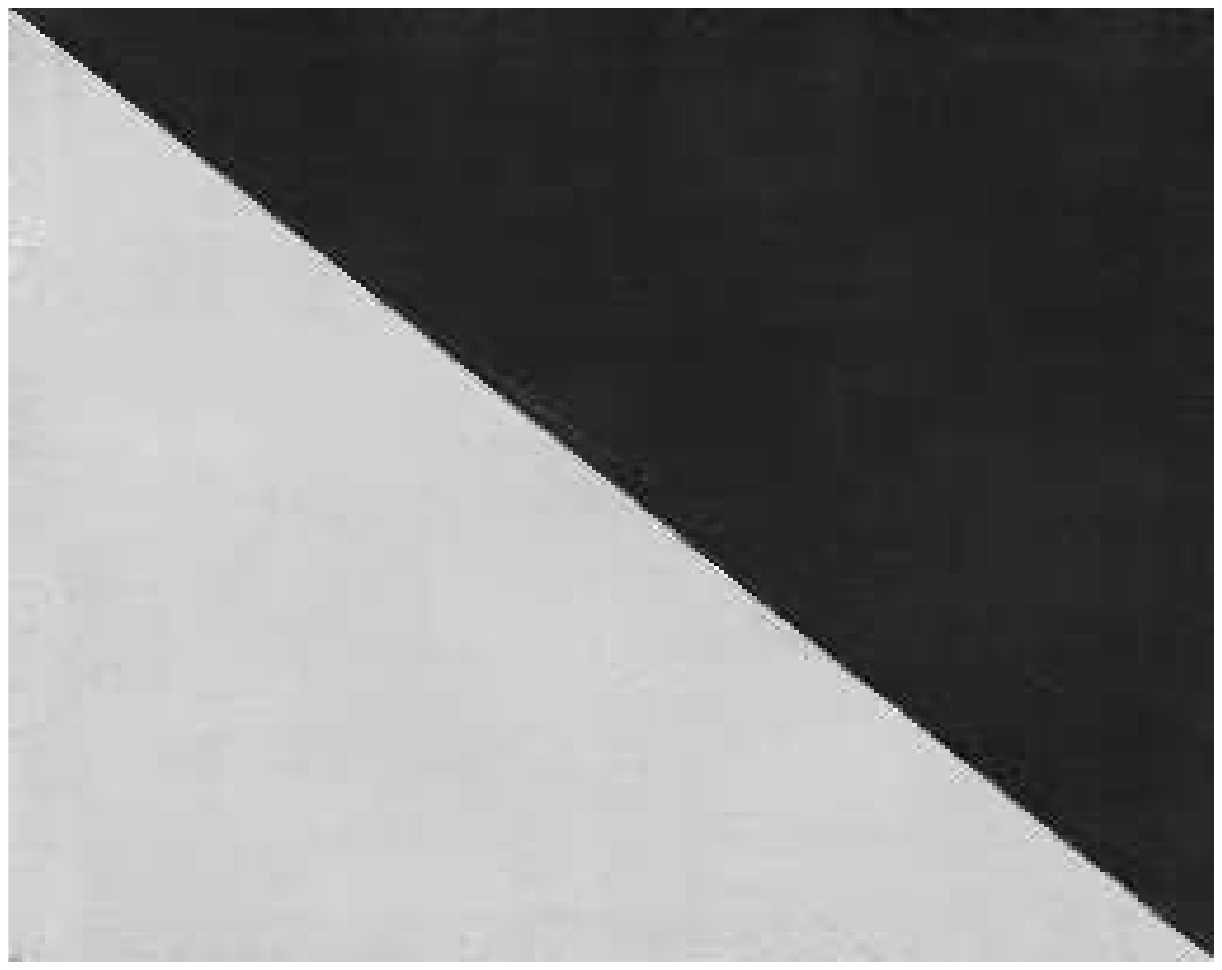}}
%
	\subfloat[DANLM, ${\rm PSNR} = 27.5$ dB]{\label{fig:hor_ganlm}
	\includegraphics*[width = 0.40\textwidth, height = 0.40\textwidth]{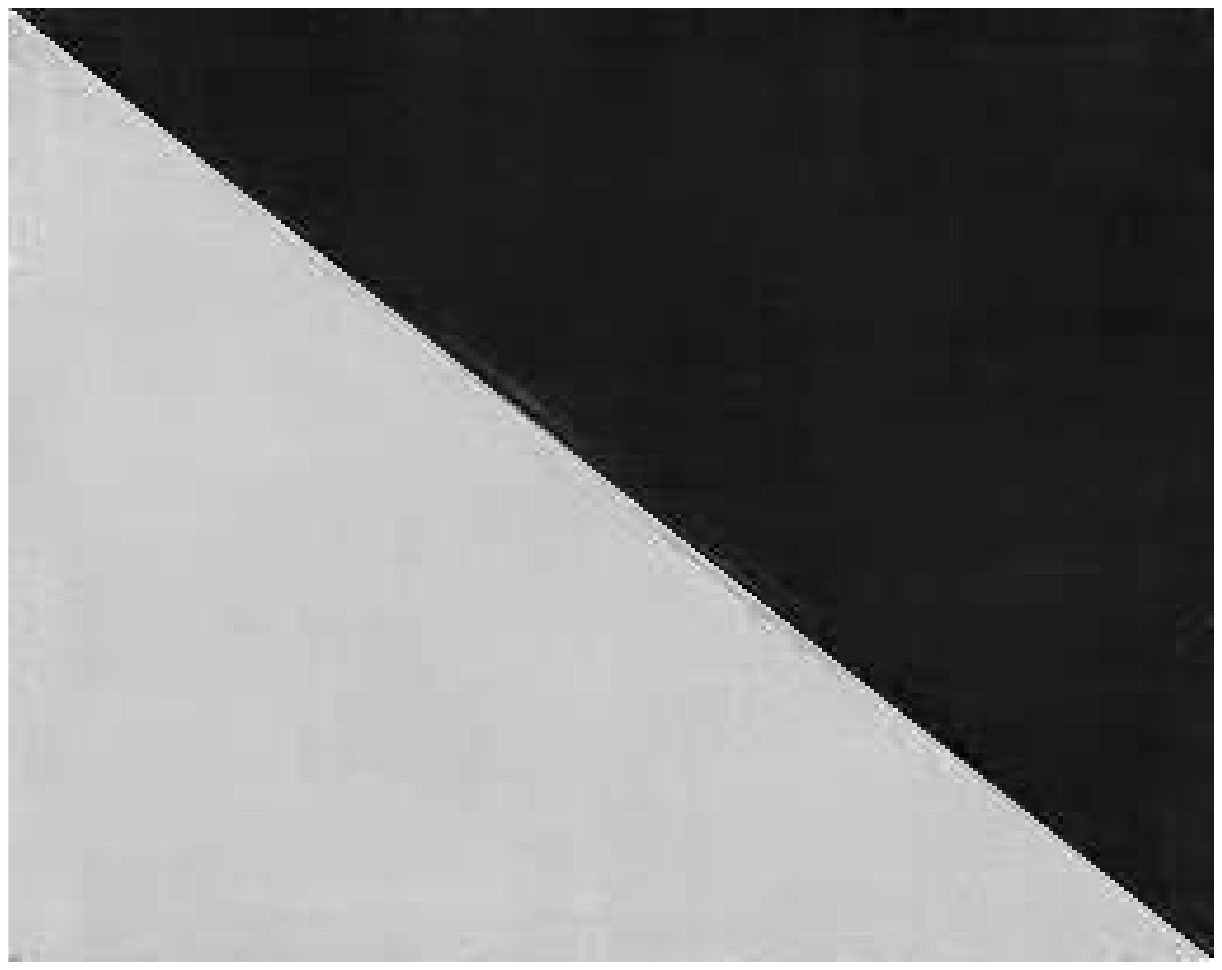}}
\caption{Continuation of the experiment of Figure \ref{fig:tuning} that compares 
(a)~isotropic NLM, 
(b)~OANLM with perfect knowledge of the edge direction, 
(c)~OANLM with 10\% error in the knowledge of the edge direction, 
(d)~DANLM with $q=4$ angles. 
The images are of size $256\times 256$ pixels. In the ANLM algorithms, $\delta_s$ and $\delta_{\ell}$ are the same as in Figure \ref{fig:tuning}. The neighborhood size of NLM is $\sqrt{\delta_s \times \delta_{\ell}}$. The edge orientation is $135^{\circ}$. The standard deviation of the noise is $\xi = 0.5$ which implies $\rm PSNR = 6.1 dB$.}
\label{fig:ONLMvsNLM}
\end{figure}

\section{Practical ANLM algorithms and experiments}\label{sec:simulation}

In this section we introduce a practical gradient-based ANLM algorithm and then complement the above theoretical arguments with additional simulations and experiments with synthetic and real-world imagery. Note that the algorithms we introduce in this section do not immediately outperform state-of-the-art denoising algorithms such as BM3D \cite{Dabov:2007p4629} but are rather intended to highlight that anisotropic neighborhoods are more suitable for edges than isotropic ones. Since NLM is a key building block in several top-performing algorithms \cite{Dabov:2007p4629, Kervrann:2006p33}, we expect that anisotropy will pay off for such algorithms as it does for NLM. But addressing this important question is left for future research.

\subsection{Extension to discrete images}\label{ssec:discset}

In practice the observations are noisy pixelated values of an image and the objective is only to estimate the pixelated values. In this section we explain how the ideas of directional neighborhood and ANLM can be extended to the discrete settings. Suppose we are interested in estimating a $n \times n$ image $f(\frac{i}{n}, \frac{j}{n})$ with noisy observations $o_{i,j} = f(\frac{i}{n}, \frac{j}{n})+\zeta_{i,j}$, where $\zeta_{i,j} \overset{iid}{\sim} N(0, \xi^2)$ and $\xi$ is the standard deviation of the noise. The extension of the anisotropic neighborhood to the discrete setting is straightforward. Let $\bar{S} \triangleq \left\{\frac{1}{n}, \frac{2}{n}, \ldots, \frac{n-1}{n},1 \right\} \times \left\{\frac{1}{n}, \frac{2}{n}, \ldots, \frac{n-1}{n},1 \right\}$ and
$\tilde{S} \triangleq \{1,2,\ldots, n \} \times \{1,2, \ldots, n\}$. For a given set $B \subset S= [0,1]^2$ we define $\bar{B} \triangleq B \cap \bar{S}$.
 
\noindent The discrete $(\theta, \delta_s, \delta_{\ell})$-distance between two pixels $o_{i,j}$ and $o_{m,\ell}$ is defined as 
\begin{eqnarray}\label{eqn:ddist}
\bar{d}_{\theta, \delta_s, \delta_{\ell}}^2(o_{i,j}, o_{m,\ell}) \triangleq \frac{1}{|\mathcal{P}|}  \sum \limits_{(r,q) \in \mathcal{P} } (o_{i+r,j+q}- o_{m+r,\ell+q})^2,
\end{eqnarray}
where $\mathcal{P} \triangleq \{(r,q) \in \mathds{Z}^2 \ | \ (\frac{i+r}{n}, \frac{j+q}{n}) \in \bar{I}_{\theta, \delta_s, \delta_{\ell}}(\frac{i}{n},\frac{j}{n}) \}$. See Figure \ref{fig:ANLMneighborhooddiscrete2}. Note that for the pixels that are close to the image boundaries the neighborhoods are not rectangular any more and they include the part of the rectangle that is inside the image boundaries. Such non-rectangular neighborhoods will be compared with non-rectangular neighborhoods of the other pixels for the calculation of the distance. The ANLM estimate at pixel $f(\frac{i}{n},\frac{j}{n})$ is given by
\begin{figure}[!t]
\centering{
  \includegraphics[width=6.1cm]{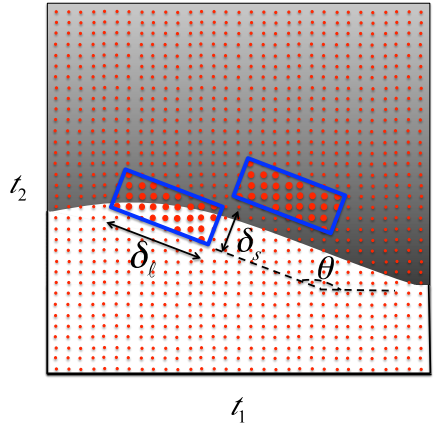}
  \caption{The anisotropic neighborhood $\bar{I}_{\theta, \delta_s, \delta_{\ell}}(\frac{i}{n}, \frac{j}{n})$ in the discrete setting for two different pixels of a Horizon class image. The bold red dots represent the anisotropic neighborhoods. }
   \label{fig:ANLMneighborhooddiscrete2}
 }
\end{figure}
\begin{eqnarray*}
\hat{f}_{i,j}^{\theta, \delta_s, \delta_{\ell}} \triangleq \frac{\sum_m \sum_{\ell}{\bar{w}}^{\theta, \delta_{\ell}, \delta_s}_{i,j}(m,\ell) o_{m, \ell}}{\sum_m \sum_{\ell}  {\bar{w}}^{\theta, \delta_{\ell}, \delta_s}_{i,j}(m,\ell)}, 
\end{eqnarray*}
where the weights are obtained from the distances $\bar{d}_{\theta, \delta_s, \delta_{\ell}}^2(o_{i,j}, o_{m,\ell})$. For instance, the simple policy introduced in \eqref{eq:weiasscont} corresponds to  
\begin{eqnarray}\label{eqn:weightvalue}
{\bar{w}}^{\theta, \delta_{\ell},\delta_s}_{i,j}(m,\ell) \triangleq \! \! \left\{\begin{array}{rl}
 1 &  \mbox{ if ${\bar{d}}_{\theta, \delta_s, \delta_{\ell}}^2(o_{i,j}, o_{m,\ell})  \leq 2 \xi^2+ \tau_{n}  $,}\\
 0 &   \mbox{ otherwise.}
\end{array}\right.
\end{eqnarray}
Using the discrete anisotropic neighborhood and distance,  the extensions of OANLM and DANLM to the discrete setting is straightforward.

\subsection{Gradient-based ANLM}\label{ssec:simugrad}

While DANLM is somewhat practical and also theoretically optimal for the Horizon class of images, its computational complexity is higher than NLM and grows linearly in the number of directions $q$, where $q \sim o(n^{2/3})$.  As a more practical alternative, we propose Gradient based ANLM (GANLM), which adjusts the orientation of an anisotropic ANLM neighborhood using an estimate of the edge orientation provided by the image gradients.  Pseudocode is given in Algorithm \ref{alg:ganlm}.  Note that GANLM reverts back to NLM in regions with low image gradients, since they will not be ``edgy'' enough to benefit from the special treatment.

\floatname{algorithm}{Algorithm}
\begin{algorithm}[!t]
\caption{\emph{Gradient-based ANLM (GANLM)}}
\label{alg:gradient}
\begin{algorithmic}
\STATE \textbf{Inputs:} 
\STATE $\hat{f}_{i,j}$ : Estimate of the image pixel
\STATE $\delta_s \times \delta_\ell$: Size of the neighborhood 
\STATE $\lambda$: Threshold that determines isotropic/anisotropic neighborhood selection
\\[3mm]
\STATE{Estimate image gradient $(g_{h}(i,j), g_v(i,j))$ at each pixel $(i,j)$ }
\FOR {every pixel $(i,j) \in I $}
\STATE $g(i,j) = \sqrt{g_{h}^{2}(i,j)+g_{v}^{2}(i,j)}$
\STATE $\theta_{i,j} = \displaystyle \arctan \left(\frac{g_{v}(i,j)}{g_{h}(i,j)}\right)$
\STATE
\IF{$g_{i} \ge \lambda$}
\STATE Perform ANLM at pixel $y_{i,j}$ with $d_{\delta_{\ell},\delta_{s},\theta_{i,j}}$
\ELSE
\STATE Perform NLM at pixel $y_{i,j}$ with  $\sqrt{\delta_s \delta_{\ell}}$.
\ENDIF
\ENDFOR
\end{algorithmic}
\label{alg:ganlm}
\end{algorithm}

There is a rich literature on robust image gradient estimation \cite{Feng:2002fk,Li:2001uq,Bigun:1991kx}.  In our implementations we use the simplest method for estimating the image gradient, i.e.,
\begin{eqnarray*}
g_h(i,j) = \hat{f}_{i+1,j} - \hat{f}_{i,j}, \nonumber \\
g_v(i,j) = \hat{f}_{i,j+1} - \hat{f}_{i,j},
\end{eqnarray*}
where $\hat{f}(i,j)$ is an estimate of the image (for instance from isotropic NLM).  If $g_h(i,j)$ and $g_v(i,j)$ are the estimated image derivatives at pixel $(i,j)$, then we can estimate the local orientation of an edge by $\hat{\theta}(i,j) = \arctan\!\left(\frac{g_v(i,j)}{g_h(i,j)}\right)$. To allay any concerns that gradient-based adaptivity is not robust to noise and errors, we recall Theorem \ref{thm:thetamismatch}, which establishes the robustness of OANLM to edge angle estimation error.  For extremely noisy images, numerous heuristics are possible, including estimating the image gradients for GANLM from an isotropic NLM pilot estimate.  Figure \ref{fig:hor_denoised} builds on Figure \ref{fig:ONLMvsNLM} with a slightly more realistic, curved edge and demonstrates that the pilot estimate approach to GANLM performs almost as well as an oracle GANLM that has access to the edge gradient. 


\begin{figure}
	\captionsetup[subfigure]{justification=centering,labelformat=parens} 
	\centering
	\subfloat[Noisy image, ${\rm PSNR}= 16.5$dB]{\label{fig:hornoise} 
	\includegraphics*[width = .32\textwidth,height=.325\textwidth]{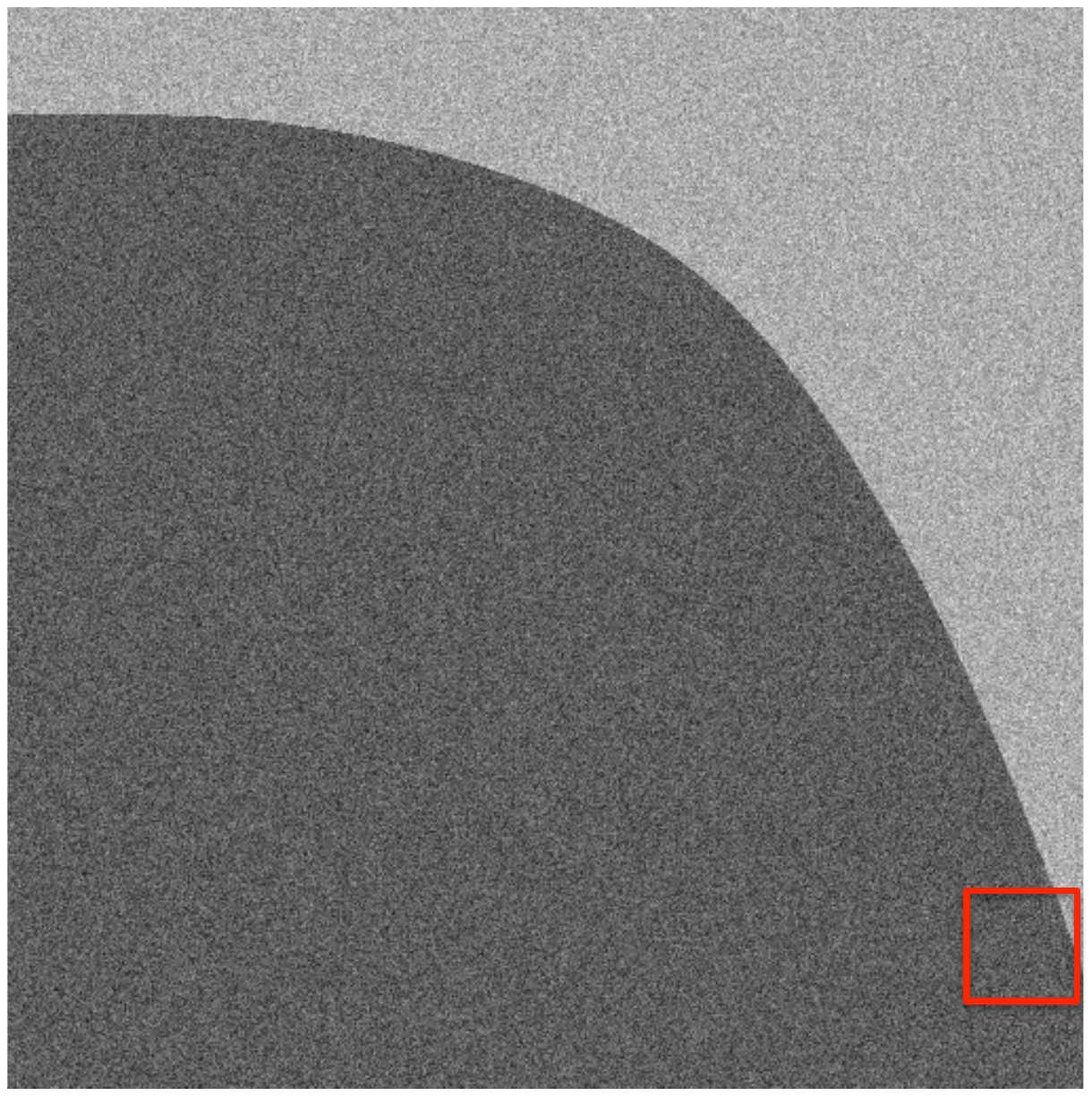}}	
	\subfloat[Horizon image]{\label{fig:hor}	
	\includegraphics*[width = .32\textwidth]{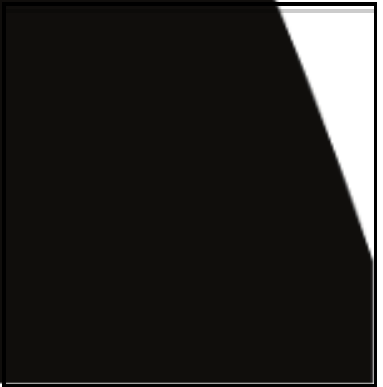}}
	\subfloat[NLM, ${\rm PSNR}= 34.4$dB]{\label{fig:nlm} 
	\includegraphics*[width = .32\textwidth,height=.32\textwidth]{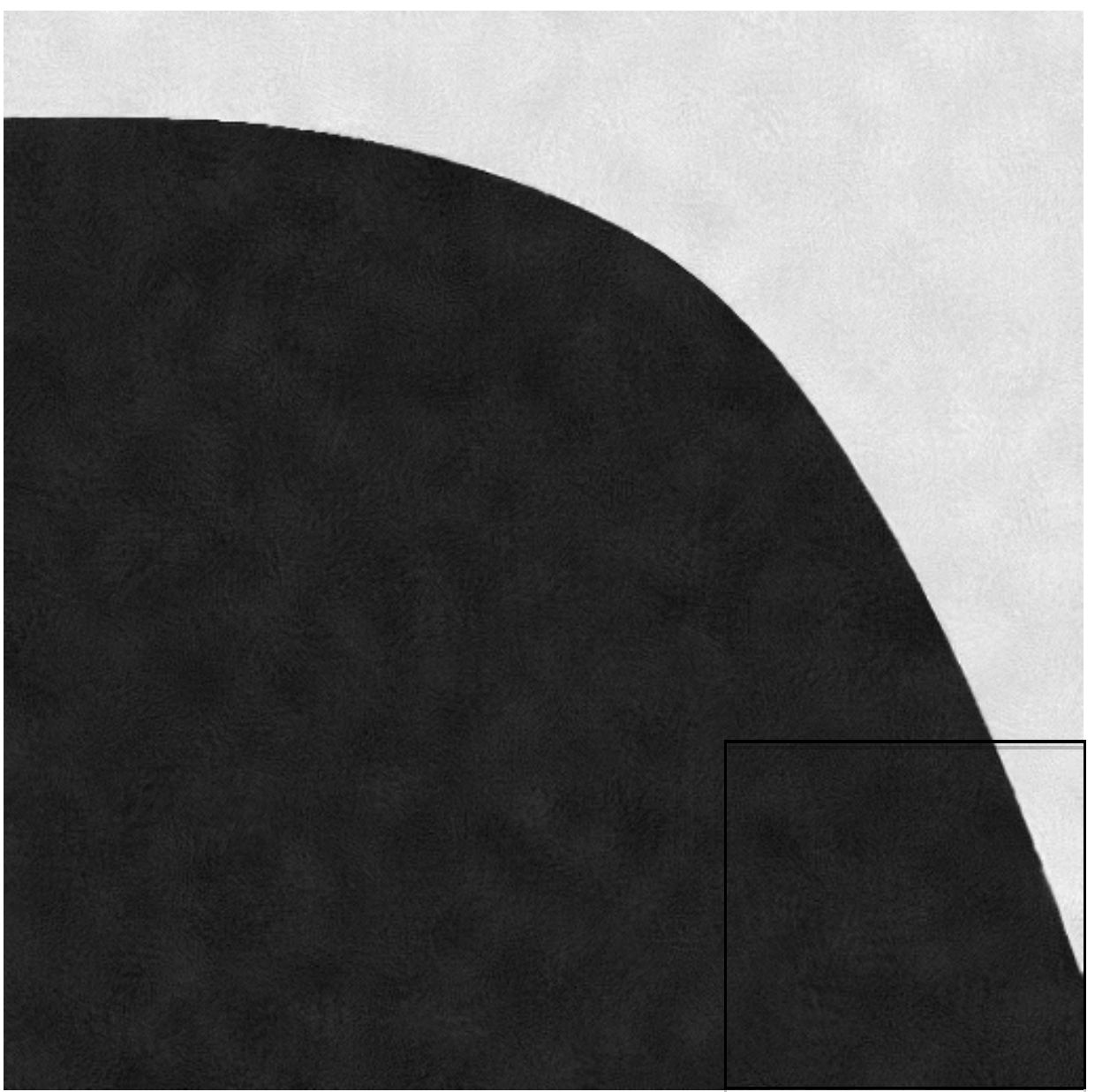}} \\
	\subfloat[DANLM, ${\rm PSNR}= 36.2$dB]{\label{fig:hor_danlm}
	\includegraphics*[width = .32\textwidth]{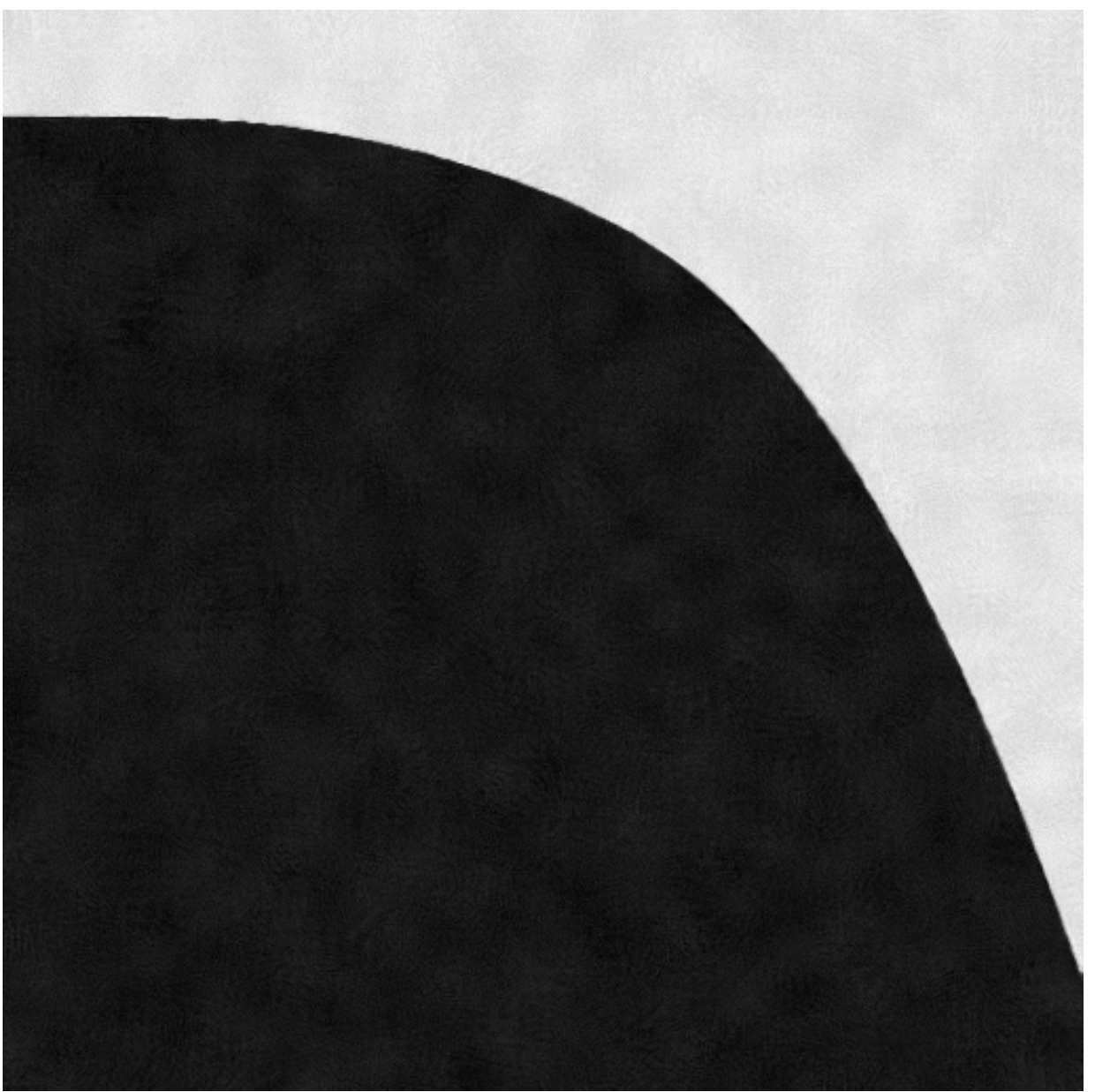}}
	\subfloat[Oracle GANLM, ${\rm PSNR}= 38$dB]{\label{fig:hor_ganlm}
	\includegraphics*[width = .32\textwidth]{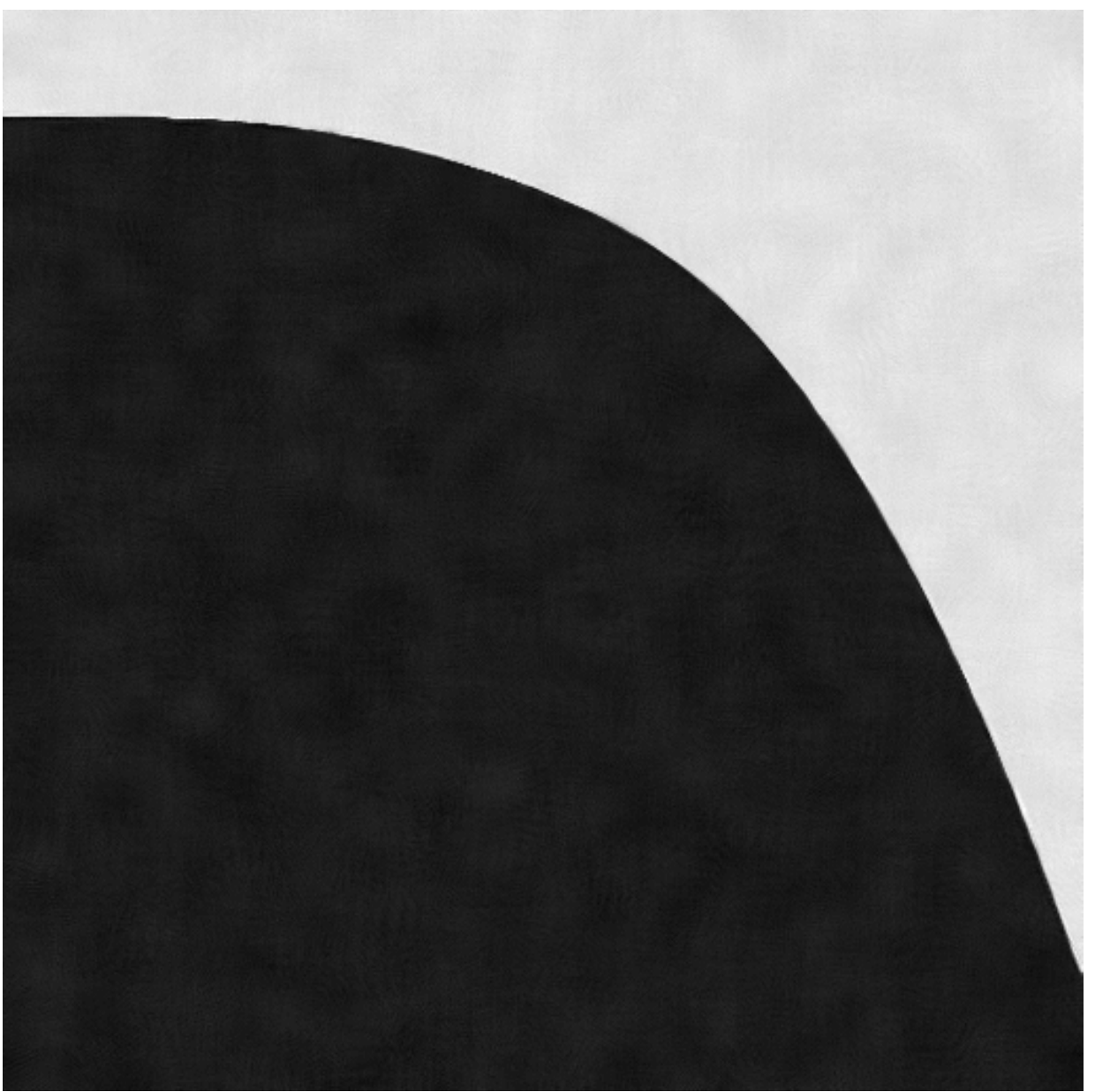}} 
	\subfloat[GANLM, ${\rm PSNR}= 38$dB]{\label{fig:hor_eganlm}
	\includegraphics*[width = .32\textwidth]{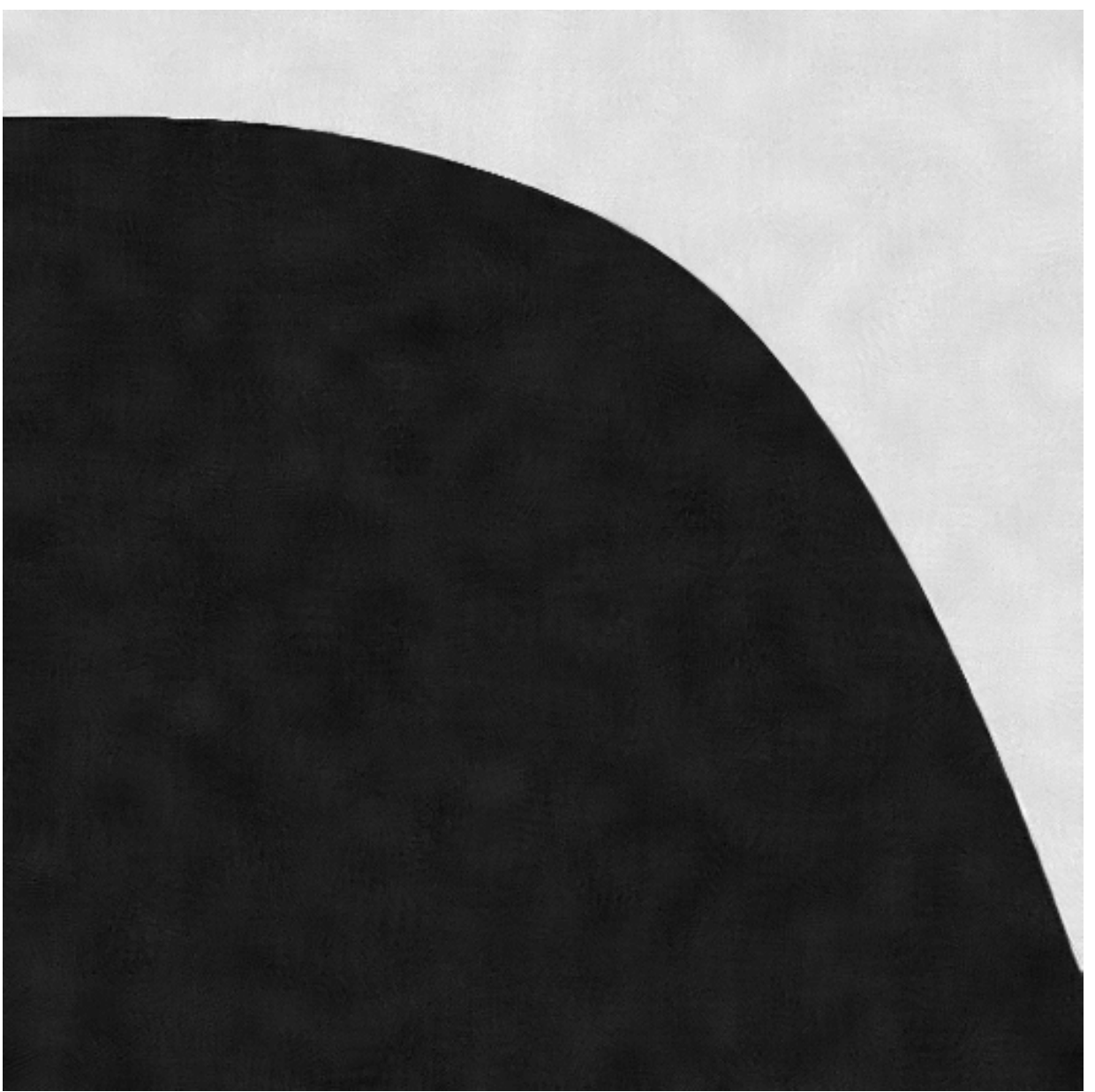}}
	\caption{Simulation experiment in the same vein as Figures \ref{fig:tuning} and \ref{fig:ONLMvsNLM} but with a slightly more realistic $C^2$ curved edge.  The images are of size $512\times 512$ pixels. In the ANLM algorithms $\delta_s$ and $\delta_{\ell}$ follow the quadratic scaling of Theorem \ref{thm:oracleANLM}. The neighborhood size of NLM is $\sqrt{\delta_s \times \delta_{\ell}}$. The standard deviation of the noise is $\xi = 0.15$. To show the differences we have zoomed on the area represented by the red block in Figure (a). } 
\label{fig:hor_denoised}
\end{figure}

Table \ref{tab:t2} summarizes the performance of the algorithms introduced in this paper with that of NLM on the natural test images Barbara \cite{Barbara}, Boats  \cite{Boats}, and Wet Paint \cite{WetPaint} submerged in noise of two different levels.  The table enables us to draw two conclusions.  First, the performance of the practical GANLM algorithm is very close to the oracle GANLM algorithm.  Second these two algorithms outperform DANLM in all but one case and significantly outperform standard NLM in all cases. We use 4 discrete angles\footnote{Experiments with larger $q$ did not differ appreciably in PSNR at this image size.} in DANLM and attribute the superior performance of GANLM over DANLM to a more accurate selection of orientations.

\subsection{Computational complexity of ANLM}

Improving the risk performance of linear filters by nonlocal averaging is achieved at the price of higher computational cost.  If $|\mathcal{P}|$ denotes the neighborhood size, then the computational complexity of NLM is proportional to $\Theta(n^4 |\mathcal{P}|)$ as compared to $\Theta(n^2)$ for linear filters. Such high complexity is not acceptable in many applications. Therefore, several effective methods have been proposed to reduce the computational complexity of NLM. See \cite{Buades:2005p4221, MaSa05, OrEbWo08, BrKlCr08} and the references therein for more information on the research in this direction. These approaches have reduced the computational complexity of NLM to an acceptable limit for practice.  

The computational complexity of ANLM is in general higher than that of NLM. For instance, the computational complexity of the Gradient-based ANLM algorithm introduced in Section \ref{ssec:simugrad} is roughly two times that of NLM if we ignore the gradient calculation, which is $\Theta(n^2)$. The computational complexity of DANLM is $\Theta(q n^4 |\mathcal{P}|)$, where $q$ denotes the number of angles considered. Most of the approaches proposed for speeding up NLM are applicable to ANLM as well. Furthermore, due to specific structure of DANLM, in particular the overlap of the neighborhoods of each pixel, we expect to be able to reduce its computational complexity even further. But this question is left for future research.

\begin{table}[!t] \footnotesize
\caption{PSNR Performance of NLM and ANLM algorithms on three natural test images at two noise levels. }
\label{tab:t2}
\vspace{-0.5cm}
\begin{center}\label{tabl:mse}
\begin{tabular}{| c || c | c | c | }
       \hline
	 Test image &  Algorithm & ${\xi = 0.25}$, 12.1dB  & ${\xi = 0.15}$, 16.5dB \\ \hline\hline
	         \multirow{5}{*}{Barbara\vspace{25pt}} 
	         & NLM & 22.48 & 25.86\\ 
	          & DANLM & 23.15 & 26.27\\
	          (512 x 512) & Oracle GANLM & 23.51 & 26.63\\
	          & GANLM & 23.50 & 26.60\\
	          \hline 
	          \multirow{4}{*}{Boats \vspace{10pt}} 
	          & NLM & 22.75 & 25.83\\ 
	          & DANLM & 23.45 & {26.45}\\
	          (512 x 512) & Oracle GANLM &  {23.88} & 26.45\\
	          & GANLM & 23.75 & 26.35\\
	           \hline
	         \multirow{4}{*}{Wet Paint \vspace{10pt}} 
	         & NLM & 27.66 & 30.49 \\ 
	          & DANLM & 28.41 & 30.75 \\
	          (1024 x 1024) & Oracle GANLM & {29.02} & {31.18}\\
	          & GANLM & 28.86 & 31.07\\
	           \hline
\end{tabular}
\end{center}
\end{table}

%
%
%
%
%

\begin{figure}
	\captionsetup[subfigure]{justification=centering,labelformat=parens} 
	\centering
	\subfloat[Barbara ]{\label{fig:barb_sig25} 
	\includegraphics*[width = .75\textwidth,height=.325\textwidth]{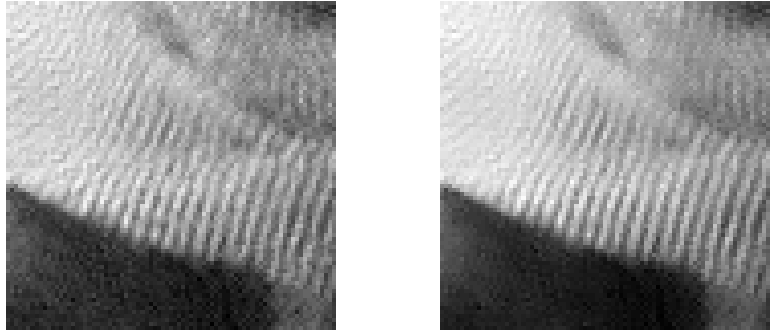}}	\\
	\subfloat[Boats ]{\label{fig:boat_sig25} 
	\includegraphics*[width = .75\textwidth,height=.325\textwidth]{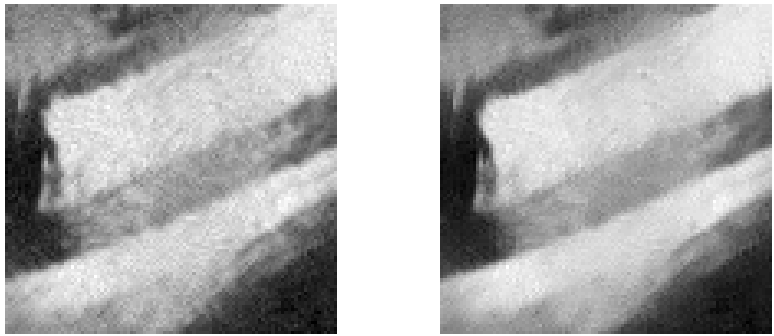}}	\\
	\subfloat[Wet Paint]{\label{fig:paint_sig25} 
	\includegraphics*[width = .75\textwidth,height=.325\textwidth]{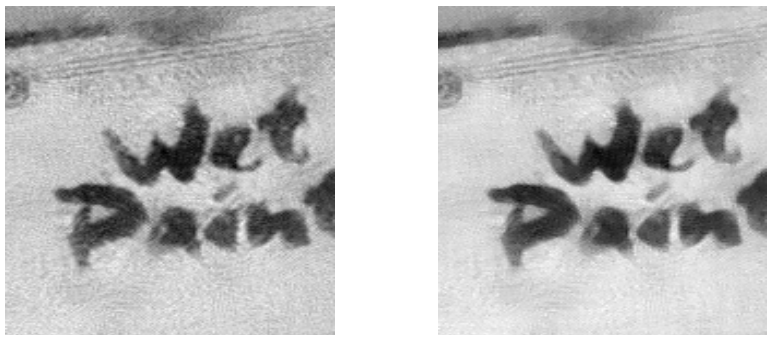}}	
%
	\caption{Zooms of the natural test images denoised by NLM (left) and ANLM (right) studies in Table \ref{tab:t2}. The standard deviation of the noise is $\xi = 0.15$ that implies PSNR$ = 16.5$dB.} 
\label{fig:hor_denoised}
\end{figure}

\section{Related work on anisotropic denoising}\label{sec:related}

Anisotropy is a fundamental feature of real-world images. As a result, anisotropic image processing tools can be traced back at least as far as the 1970s \cite{NaMa79}. Here, we briefly compare and contrast some of the relevant anisotrpic denoising schemes with ANLM.

{\em Anisotropic filtering} methods use a space-varying linear convolution filters to reduce blur along image edges. Toward this goal,  \cite{NaMa79} considers several different neighborhoods around a pixel and selects the most ``homogeneous'' neighborhood to smooth over.  A more advanced version of this idea can be found in \cite{Takeda06, TaFaMi07, EgKaAs01}.  There are major differences between such algorithms and NLM/ANLM; in particular, the estimators are local and do not exploit global similarities. See also \cite{MilanfarTour} for a more complete overview of such algorithms and their differences with NLM/ANLM.
 
 \emph{Anisotropic diffusion} methods smooth a noisy image with a Gaussian kernel. As the standard deviation of the kernel increases, the smoothing process introduces larger bias to the edges. In \cite{HuMo89,Koenderink84} the authors proved that the set of images derived by this approach can be viewed as the solution to the heat diffusion equation. Perona and Malik \cite{PeMa90} noted the isotropy in the heat equation and introduced anisotropy. Their anisotropic diffusion starts with the heat equation but at every iteration exploits the gradient information of the previous estimate to increase the conductance along the edges and decrease it across the edges.  See \cite{KiMaSo00, Tsch06, KiSo02} for more recent advances in this direction. Efforts to theoretically analyze the risk of such algorithms have left many open questions remaining \cite{CaDo09}. It is conjectured that there is a connection between such algorithms and iteratively applying nonlinear filters such as the median filter. It is worth noting that the idea of applying an image denoising algorithm iteratively and guiding it at every iteration based on previous estimates goes back to Tukey's twicing \cite{Tukey77}.

 \emph{Anisotropic transformations} enable simple threshold based denoising algorithms.  While the standard separable wavelet transform cannot exploit the smoothness of the edge contour, a menagerie of anisotropic transforms have been developed, including ridgelets \cite{Can99-1}, curvelets \cite{Donoho:2000p1676}, wedgelets \cite{Donoho:1999p1950}, platelets \cite{Wil03,Wil07}, shearlets \cite{KuLa07, GuLa07, EaLaLi08}, contourlets \cite{DoVe05}, bandelets \cite{1407972, PePeDoMa07}, and directionlets \cite{Vel06}. As mentioned in the Introduction, among the above algorithms only wedgelets can obtain the optimal minimax risk for the Horizon class; however wedgelets are not suited to denoising textures.  One promising avenue combing wavelets (for texture denoising) and wedgelets (for edge denoising) could follow the path of the image coder in \cite{WaRoChBa06}. See \cite{LaWeWeWi02, JaDuCuPe11} for an overview of anisotropic transformations and their applications in image processing. 
 

Alternative NLM algorithms have been proposed to address the inefficiency of using a fixed neighborhood 
\cite{ GrZiWe11,  PiMrDiGr10, DeDuSa11, DaFoKatEgi08, SaSt10, Wong:2008p2002,Kervrann:2006p33, Dabov:2007p4629}. In \cite{DeDuSa11, SaSt10}, the authors aggregate several nonlocal estimates to obtain a final estimate. The nonlocal estimates are derived from NLM with different neighborhoods around each pixel. While these methods improve upon NLM on edges in practice, there is no theoretical result to support such empirical results. Similar approaches have been exploited in several state-of-the-art denoising algorithms \cite{DaFoKatEgi08, Dabov:2007p4629}. In \cite{Wong:2008p2002,Kervrann:2006p33, VaKo09}, the authors adapt the neighborhood size to the local image content. \cite{Kervrann:2006p33} considers different isotropic NLM neighborhood sizes depending on how smoothly the image content varies. \cite{Wong:2008p2002} uses image gradient information to increase the weights of the NLM along edges and decrease them across edges. This method is equivalent to modifying the threshold parameter $t_{n}$ to force NLM to assign higher weights to edge-like neighborhoods. \cite{VaKo09} sets the neighborhood size based on Stein's unbiased estimate of the risk (SURE).  Other generalizations include the rotationally invariant similarity measure \cite{GrZiWe11} and nonlocal variational approaches \cite{PiMrDiGr10, GiOs08, KiOsJo05}. Unfortunately, these techniques do not reduce the bias that renders NLM sub-optimal.

Finally, {\em data-driven optimality criteria} have been considered in \cite{Raphan:2010qy,LeNa11,ChMi10}, where the authors derive lower bounds for the performance of denoising algorithms. However, the analyses provided in these papers are not fully rigorous and do not cover the performance of NLM for images with sharp edges.

\section{Discussion and future directions}\label{sec:conc}

We have introduced and analyzed a framework for anisotropic nonlocal means (ANLM) denoising. Similar to NLM, ANLM exploits nonlocal information in estimating the pixel values. However, unlike NLM, ANLM uses anisotropic, oriented neighborhoods that can exploit the smoothness of edge contours.  This enables ANLM to outperform NLM both theoretically and empirically. In fact, the performance of ANLM is withtin a logarithmic factor of optimal as measured by the minimax rate on the Horizon class of images. 
 

Numerous questions remain open for future research. From the theoretical perspective, the risk analysis of GANLM, the application to noise models beyond Gaussian, and the extension to three dimensions and beyond (for seismic, medical, and other data) pose interesting research challenges. From a practical perspective, the question of how to best tune ANLM to match the nuanced edges and textures of real-world images remains open, since we have considered only brutal binary images here.  Finally, while NLM is no longer the state-of-the-art denoising algorithm, it is a key building block in several top-performing algorithms.  It would be interesting to see whether anisotropy pays off as handily for those algorithms as it does for NLM.

\section{Proofs of the main results}\label{sec:proofs}

\subsection{Preamble}

We first introduce some notation. Define the following partitions of the set $S= [0,1]^2$:
\begin{eqnarray*}
S_1 &\triangleq& \{(v, u) \ | \ u > h(v) +(1+ C/2) \delta_s \}, \\
S_2 &\triangleq& \{(v, u) \ | \ h(v)-(1+ C/2)\delta_s < u < h(v)+ (1+C/2)\delta_s \}, \\
S_3 &\triangleq& \{(v,u)\ | \ u< h(v)-(1+C/2) \delta_s \}.
\end{eqnarray*}
It is important to note that, if $(t_1,t_2) \in S_1$ and $\tan(\theta) = h'(t_1)$, then $I_{\theta, \delta_s, \delta_{\ell}} (t_1,t_2)$ does not overlap with the edge
contour. In other words, the correctly aligned neighborhood of $(t_1,t_2) \in S_1$ is always above the edge. The points in $S_3$ satisfy a similar property.
This is clarified in Figure \ref{fig:ANLMDiscreteRegions}.
\begin{figure}
\begin{center}
  \includegraphics[width=6cm]{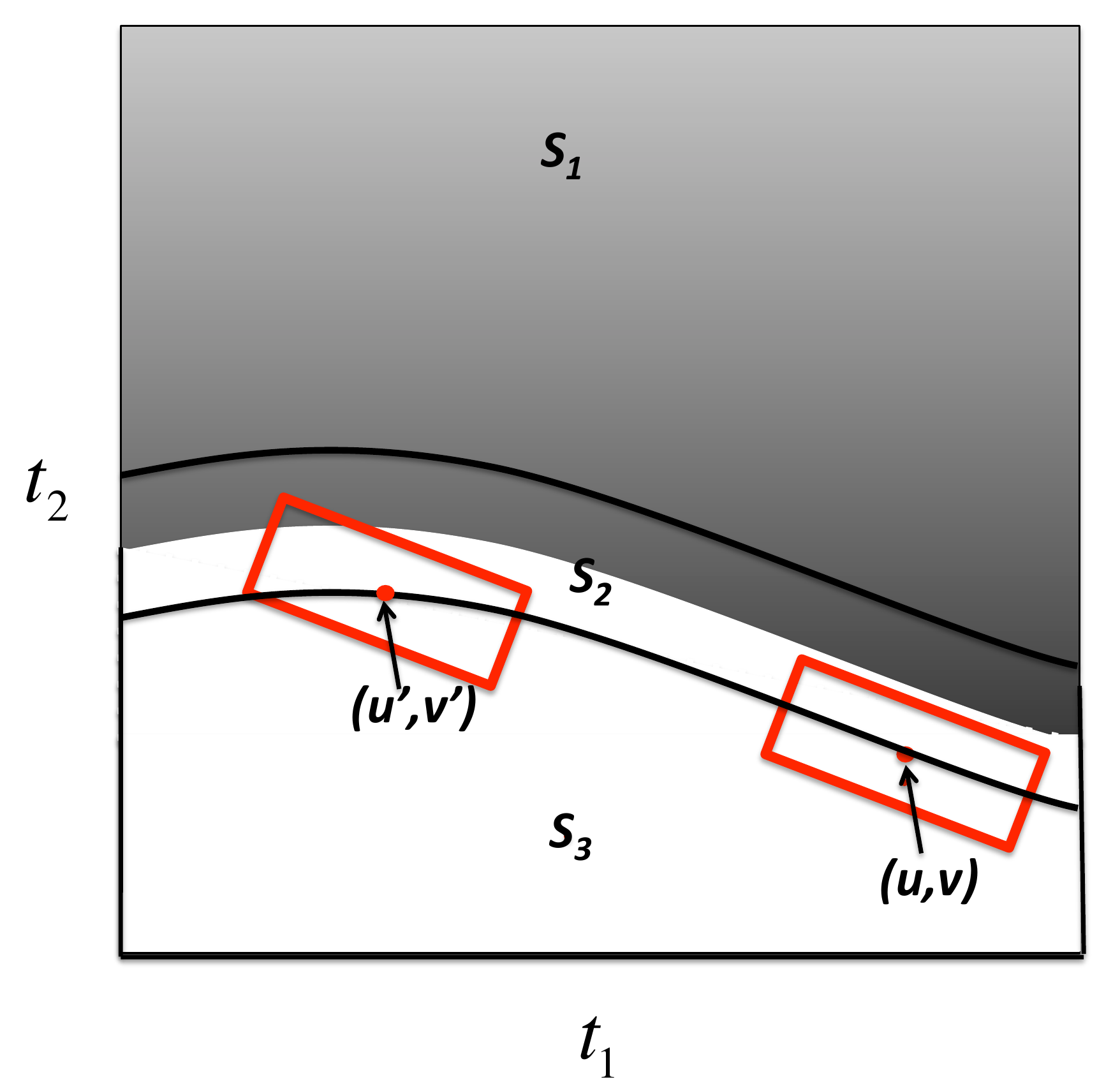}\\
  \caption{Regions $S_1$, $S_2$ and $S_3$. The neighborhood of $(u,v) \in {S}_3$ is aligned with the edge contour, and therefore it does not intersect the edge, while the neighborhood of $(u',v')$ is not aligned and therefore may intersect with the edge. The neighborhoods of the pixels in $S_2$ may intersect the edge even though they are correctly aligned.}
   \label{fig:ANLMDiscreteRegions}
  \end{center}
\end{figure}

\noindent We further partition $S_1$ into $P_1$ and $P_2$ and $S_3$ into $P_3$ and $P_4$ such that
\begin{eqnarray*}
P_1 &\triangleq& \{(v,u) \ | \ h(v) + 2\delta_{\ell} + (C/2) \delta_s \leq u \}, \\
P_2 &\triangleq&\{(v,u) \ | \ h(v) + (1+C/2)\delta_s \leq u \leq h(v)+ 2\delta_{\ell} + (C/2) \delta_s \},\\
P_3 &\triangleq&\{(v,u) \ | \ h(v) - (1+C/2)\delta_s \geq u \geq h(v)- 2\delta_{\ell} - (C/2) \delta_s \}, \\
P_4  &\triangleq&\{(v,u) \ | \ u \leq h(v) -2 \delta_{\ell} -(C/2) \delta_s \}.
\end{eqnarray*}
 
 \begin{figure}[!t]
\begin{center}
  \includegraphics[width=6.3cm]{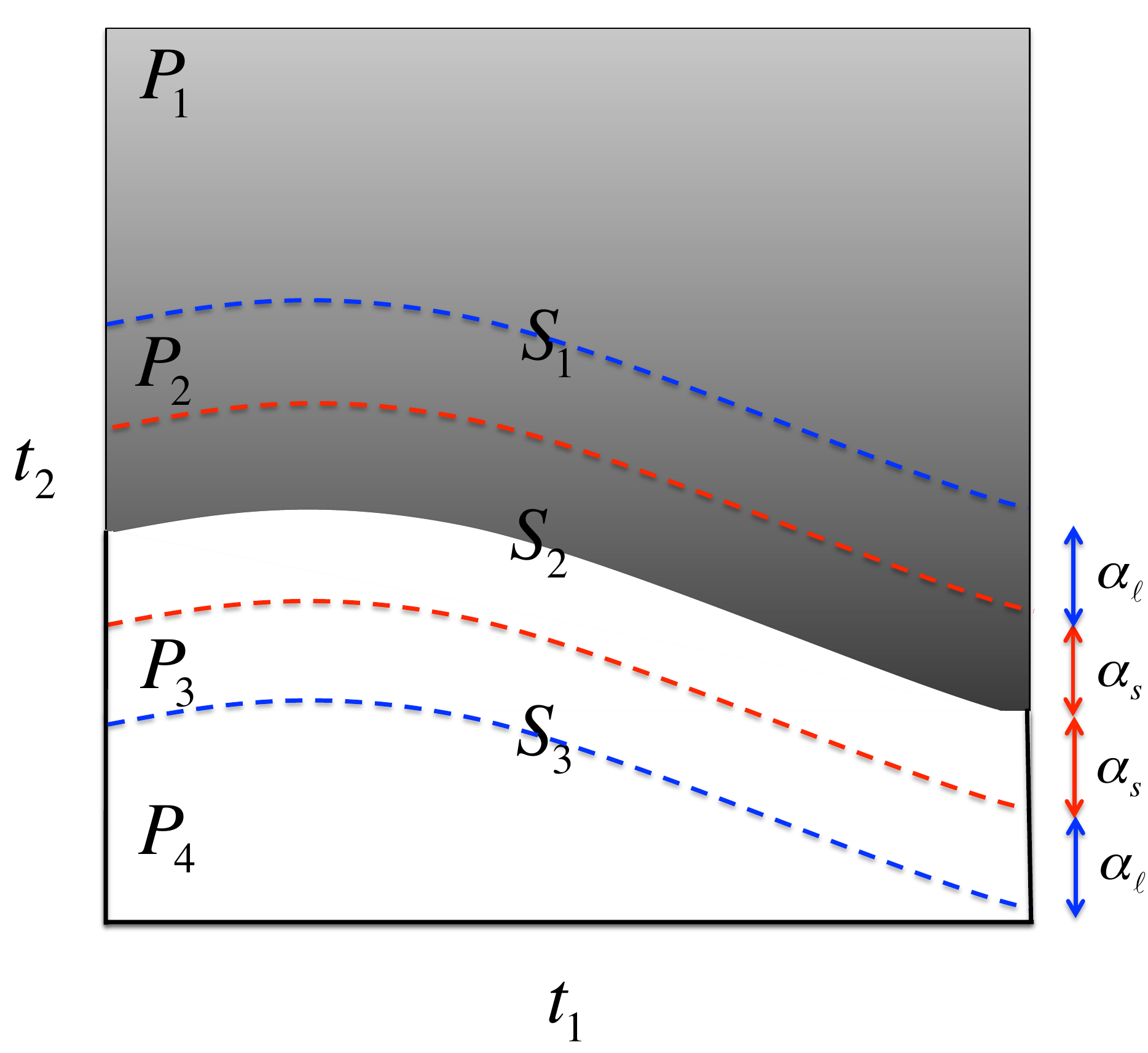}
  \caption{The regions $P_1$, $P_2$, $P_3$, $P_4$ with $\alpha_s =(1+ C/2)\delta_s$  and $\alpha_{\ell} = 2\delta_{\ell}-\delta_s$. Every neighborhood of $(t_1,t_2) \in P_1$ will lie completely above the edge contour. However, some of the neighborhoods of the pixels $(t_1,t_2) \in {P}_2$ may intersect the edge. A similar property holds for the regions $P_3$ and $P_4$. }
   \label{fig:RegionsP1P4}
  \end{center}
\end{figure}

\begin{lem}
Any neighborhood of pixel $(v, u) \in P_1$ will lie completely above the edge contour. 
\end{lem}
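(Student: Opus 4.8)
The plan is to reduce the claim to an elementary geometric estimate: a point of an anisotropic neighborhood cannot move far from its center in either coordinate, while the edge contour $h$ is $1$-Lipschitz, so the contour cannot ``rise'' faster than the neighborhood is tall.

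First I would fix $(v,u)\in P_1$ and an \emph{arbitrary} orientation $\theta$, and take any $(s,t)\in I_{\theta,\delta_s,\delta_\ell}(v,u)$. Writing the displacement of $(s,t)$ from its center $(v,u)$ as $a(\cos\theta,\sin\theta)+b(-\sin\theta,\cos\theta)$ with $|a|\le\delta_\ell/2$ and $|b|\le\delta_s/2$, the triangle inequality gives $|s-v|\le\tfrac12(\delta_\ell+\delta_s)\le\delta_\ell$ and likewise $|t-u|\le\delta_\ell$, using the standing assumption $\delta_s\le\delta_\ell$; intersecting the rotated rectangle with $S$ only discards points, so these bounds persist.

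Next I would invoke the membership $h\in{\sl H\ddot{o}lder}^{1}(1)$ built into the definition of $H^{\alpha}(C)$: it yields $h(s)\le h(v)+|s-v|\le h(v)+\delta_\ell$. On the other hand, $(v,u)\in P_1$ means $u\ge h(v)+2\delta_\ell+(C/2)\delta_s$, whence $t\ge u-\delta_\ell\ge h(v)+\delta_\ell+(C/2)\delta_s$. Combining these two inequalities gives $t-h(s)\ge (C/2)\delta_s>0$, so $(s,t)$ lies strictly in $\{t_2>h(t_1)\}$, i.e., strictly above the edge contour. Since $(s,t)$ and $\theta$ were arbitrary, the entire neighborhood lies above the contour, as claimed.

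I do not anticipate a genuine obstacle; the only things to watch are that the neighborhood may be rotated by an arbitrary angle, so one must bound the displacement in \emph{both} coordinates rather than just along the long axis, and that the relevant smoothness input is the $1$-Lipschitz bound on $h$ (the sharper ${\sl H\ddot{o}lder}^{\alpha}(C)$ estimate is not needed for this margin argument). The offset $2\delta_\ell+(C/2)\delta_s$ in the definition of $P_1$ is deliberately generous, so even the crude bound $|t-u|\le\delta_\ell$ leaves the positive slack $(C/2)\delta_s$ to spare.
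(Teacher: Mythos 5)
Your proof is correct and follows essentially the same route as the paper's: bound the displacement of any neighborhood point from its center by $\delta_\ell$ in each coordinate, bound the variation of $h$ over a horizontal interval of length $\delta_\ell$, and compare against the vertical margin built into the definition of $P_1$. The only cosmetic difference is that you use the $1$-Lipschitz bound on $h$ directly, whereas the paper expands $h$ to second order using $|h'|\leq 1$ and $|h''|\leq C$; both give a sufficient estimate, and yours leaves the $(C/2)\delta_s$ slack unused rather than consumed exactly.
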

 \begin{proof}
 Let $(t_1,t_2) \in P_1$. If $(u,v) \in I_{\theta, \delta_s, \delta_{\ell}} (t_1,t_2)$, then 
 \begin{equation}\label{eq:neighborhood1}
 t_2-v<\delta_{\ell}.
 \end{equation}
  On the other hand, for $t_1' \in [t_1 - \delta_{\ell}, t_1+ \delta_{\ell}]$ we have,
 \[
 h(t^{\prime}_1) = h(t_1) + h'(t_1)(t'_1-t_1) + \frac{1}{2} h^{\prime\prime}(t^{\prime \prime}_1) (t^{\prime}_1-t_1)^2, 
 \]  
 where $t^{\prime \prime}_1$ is between $t_1$ and $t'_1$. According to the definition of the Horizon class, $h \in {\sl H\ddot{o}lder}^{\alpha}(C)$ and hence $|h'(t_1)| \leq 1$. Therefore,
 \begin{equation} \label{eq:edgecurve2}
 h(t^{\prime}_1) - h(t_1) < \delta_{\ell} + \frac{1}{2} C\delta_s, 
 \end{equation}
Comparing \eqref{eq:neighborhood1} and \eqref{eq:edgecurve2} completes the proof.
\end{proof}
In spite of $P_1$, some of the neighborhoods of the pixels $(v,u) \in P_2$
may intersect the edge. Similarly one can prove that any neighborhood of a pixel $(v,u) \in P_4$ lies completely below the edge, and some of the neighborhoods of the pixel $(v,u) \in P_3$ may intersect the edge. Figure \ref{fig:RegionsP1P4} displays these regions. 
The following lemma, proved in \cite{MaNaBa11}, will be used in the proofs of the main theorems. For the sake of completeness, we sketch the proof here. \medskip

\begin{lem}\label{thm:chisq_conc}
Let $Z_1, Z_2, \ldots, Z_r$ be iid $N(0,1)$ random variables. The $\chi^2_r$ random variable $\sum_{i=1}^r Z_i^2$ concentrates around its mean with high probability, i.e.,
\begin{align} 
&\P\left(\frac{1}{r}\sum_i Z_i^2 -1 >t \right) \leq {\rm e}^{-\frac{r}{2}(t- \ln (1+t))}, \label{eq1:concchis} \\
&\P\left( \frac{1}{r}\sum_i Z_i^2 -1 <-t \right) \leq {\rm e}^{-\frac{r}{2}(t+ \ln (1-t))}.  \label{eq2:concchis}
\end{align}
\end{lem}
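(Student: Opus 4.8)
The plan is to prove both tail bounds by the Cram\'er--Chernoff (exponential Markov) method, exploiting the fact that each $Z_i^2$ is a $\chi^2_1$ variable with an explicitly known moment generating function. Recall that for $Z \sim N(0,1)$ one has $\E[{\rm e}^{\lambda Z^2}] = (1-2\lambda)^{-1/2}$ for $\lambda < 1/2$ (the defining integral diverges for $\lambda \ge 1/2$), so by independence $\E[{\rm e}^{\lambda \sum_i Z_i^2}] = (1-2\lambda)^{-r/2}$ on the same range, and likewise $\E[{\rm e}^{-\lambda \sum_i Z_i^2}] = (1+2\lambda)^{-r/2}$ for every $\lambda>0$. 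This single computation drives both inequalities.

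For \eqref{eq1:concchis}, I would fix $\lambda \in (0,1/2)$ and apply Markov's inequality to ${\rm e}^{\lambda \sum_i Z_i^2}$:
\[
\P\!\left(\tfrac{1}{r}\sum_i Z_i^2 - 1 > t\right) = \P\!\left(\sum_i Z_i^2 > r(1+t)\right) \le {\rm e}^{-\lambda r(1+t)}(1-2\lambda)^{-r/2} = \exp\!\big(-r\,g(\lambda)\big),
\]
where $g(\lambda) = \lambda(1+t) + \tfrac{1}{2}\ln(1-2\lambda)$. I would then maximize $g$ over $(0,1/2)$; solving $g'(\lambda)=0$ gives $\lambda^\star = \tfrac{t}{2(1+t)}$, which lies in the admissible interval for every $t>0$, and substituting back yields $g(\lambda^\star) = \tfrac12\big(t - \ln(1+t)\big)$, exactly the claimed exponent.

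For \eqref{eq2:concchis}, I would symmetrically apply Markov to ${\rm e}^{-\lambda \sum_i Z_i^2}$ with $\lambda > 0$ (no upper restriction is needed here, since $\E[{\rm e}^{-\lambda Z^2}]$ is finite for all $\lambda>0$):
\[
\P\!\left(\tfrac{1}{r}\sum_i Z_i^2 - 1 < -t\right) = \P\!\left(\sum_i Z_i^2 < r(1-t)\right) \le {\rm e}^{\lambda r(1-t)}(1+2\lambda)^{-r/2},
\]
and then minimize the exponent $\lambda(1-t) - \tfrac12\ln(1+2\lambda)$ over $\lambda > 0$. The optimizer is $\lambda^\star = \tfrac{t}{2(1-t)}$ (admissible since $0<t<1$), and plugging it in produces the exponent $\tfrac12\big(t + \ln(1-t)\big)$, giving the stated form of the bound.

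The argument is essentially routine; the only points needing a little care are (i) keeping $\lambda$ in the interval where the $\chi^2$ moment generating function is finite in the upper-tail case, and checking the stationary point $\lambda^\star$ falls strictly inside it, and (ii) verifying that the somewhat messy value $g(\lambda^\star)$ collapses, after cancellation, to $t-\ln(1+t)$ (and its counterpart to $t+\ln(1-t)$). There is no genuine obstacle, so I would simply perform these two verifications and conclude.
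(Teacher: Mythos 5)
Your proof is correct and takes essentially the same route as the paper's: an exponential Markov (Chernoff) bound using the $\chi^2$ moment generating function $\E[{\rm e}^{\lambda Z^2}]=(1-2\lambda)^{-1/2}$, optimized at the same stationary point (your $\lambda^{\star}=t/(2(1+t))$ is the paper's $\eta^{\star}/r$), with the lower tail handled symmetrically where the paper only sketches it. One remark: your lower-tail computation correctly yields the bound ${\rm e}^{+\frac{r}{2}(t+\ln(1-t))}$, whose exponent is negative for $0<t<1$; the sign in the paper's displayed \eqref{eq2:concchis} makes that bound trivially larger than $1$, so the version you derived is the meaningful one and the statement appears to carry a sign typo rather than your argument a gap.
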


\begin{proof}
Here we prove \eqref{eq1:concchis}; the proof of \eqref{eq2:concchis} follows along very similar lines. From Markov's Inequality, we have
\begin{eqnarray}\label{eq:prob:chisquare}
\lefteqn{\P\left (\left( \frac{1}{r} \sum_{i=1}^r Z_i^2 \right) -1 > t \right) \leq {\rm e}^{-\eta t-\eta} \E\left({\rm e}^{\frac{\eta}{r} \sum_{i=1}^{r}  Z_i^2} \right)}  \nonumber \\
&= &{\rm e}^{-\eta t-\eta} \left(\E\left({\rm e}^{\frac{\eta Z_1^2}{r}} \right)\right)^r =\frac{ {\rm e}^{-\eta t-\eta}}{\left( 1- \frac{2\eta}{r} \right)^{\frac{r}{2}}}.\hspace{2cm}
\end{eqnarray}
The last inequality follows from Lemma 3 in \cite{MaNaBa11}. The upper bound proved above holds for any $\eta < \frac{r}{2}$. To obtain the lowest upper bound we minimize $\frac{ {\rm e}^{-\eta t-\eta}}{\left( 1- \frac{2\eta}{r} \right)^{\frac{r}{2}}}$ over $\eta$. The optimal value is  $\eta^{\star} = \arg\min_{\eta}  \frac{ {\rm e}^{-\eta t-\eta}}{\left( 1- \frac{2\eta}{r} \right)^{\frac{r}{2}}} = \frac{rt}{2(t+1)}$. Plugging $\eta^\star$ into (\ref{eq:prob:chisquare}) proves the lemma.
\end{proof}

\subsection{Proof of Theorem \ref{thm:oracleANLM}}

In the set up above, we consider three different regions for the point $(t_1,t_2)$. As we will see in the proof,
the risk of all pixels in each region has the same upper bound. We calculate these upper bounds and then
combine them to obtain a master upper bound for the risk of OANLM.

\bigskip
\noindent Case I: $(t_1,t_2) \in S_1$. We know that if the anisotropic neighborhood of $(t_1,t_2)$, $I_{\theta, \delta_s, \delta_{\ell}}$ is aligned with the edge contour, i.e.,
$\tan(\theta) = h'(t_1)$, then it does not intersect the edge contour. To calculate the OANLM estimate we first calculate the weights. Define 
\begin{eqnarray}
\mathbf{z}^{\theta, \delta_s, \delta_{\ell}}_{t_1,t_2}(j_1,j_2) \triangleq  \frac{n_s n_{\ell}}{\delta_s \delta_{\ell}} \int_{(s_1,s_2) \in I^{j_1,j_2}_{\theta, \delta_s, \delta_{\ell}}} dW(s_1,s_2),
\end{eqnarray} 
and
\begin{eqnarray*}
Z(t_1,t_2) &\triangleq & \frac{n_s n_{\ell}}{\delta_s \delta_{\ell}} \int \int_{(s_1,s_2) \in I_{\theta,\frac{\delta_s}{n_s},\frac{\delta_{\ell}}{n_\ell} }(t_1, t_2)} dW(s_1,s_2), \\
F(t_1,t_2) &\triangleq& \frac{n_s n_{\ell}}{\delta_s \delta_{\ell}} \int \int_{(s_1,s_2) \in I_{\theta,\frac{\delta_s}{n_s},\frac{\delta_{\ell}}{n_\ell} }(t_1, t_2)} f(s_1,s_2)ds_1 ds_2.
\end{eqnarray*}
For notational simplicity we will use
$w({s_1,s_2})$ and $\mathbf{z}_{t_1,t_2}(j_1,j_2)$ instead of $w_{t_1,t_2}^{\theta, \delta_s, \delta_{\ell}}(s_1,s_2)$ and $\mathbf{z}^{\theta, \delta_s, \delta_{\ell}}_{t_1,t_2}(j_1,j_2)$, respectively. Define
\begin{eqnarray*}
A_1 &\triangleq& \{ (u,v)\in P_1 \ | \  w({u,v}) =1 \}, \\
A_2 &\triangleq& \{ (u,v)\in P_4 \ | \  w({u,v}) =0 \}.
\end{eqnarray*}
Here, let $\lambda(\cdot)$ denote the Lebesgue measure of a set over $\mathds{R}^2$. 
%
%

\begin{lem}\label{lem:lebesgue}
Let $\delta_{\ell} = 2 \sigma^{2/3} |\log(\sigma)|^{2/3}$ and $\delta_{s} = 4 \sigma^{4/3} |\log(\sigma)|^{4/3}$,  $n_s =2|\log(\sigma)|^{2/3}$ $n_{\ell} = 4|\log(\sigma)|^{4/3}$, and $\tau_{\sigma} = \frac{2}{\sqrt{|\log \sigma|}}$.
Then,
\begin{eqnarray*}
\P(\lambda(P_1) - \lambda(A_1) > \epsilon) &=& O\left( \frac{\sigma^{8}}{\epsilon}\right), \\
\P(\lambda(P_4)-\lambda(A_2) > \epsilon) &=& O\left( \frac{\sigma^{8}}{\epsilon} \right).
\end{eqnarray*}
\end{lem}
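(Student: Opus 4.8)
The plan is to express both defects as the Lebesgue measure of a set of ``misweighted'' pixels, bound the expectation of that measure pixel by pixel, and finish by Markov's inequality. Since $A_1=\{(u,v)\in P_1: w(u,v)=1\}$, we have $\lambda(P_1)-\lambda(A_1)=\int_{P_1}\mathbf{1}\{w(u,v)=0\}\,du\,dv\ge 0$, so by Tonelli $\E[\lambda(P_1)-\lambda(A_1)]=\int_{P_1}\P(w(u,v)=0)\,du\,dv\le\lambda(P_1)\sup_{(u,v)\in P_1}\P(w(u,v)=0)$, and Markov then yields $\P(\lambda(P_1)-\lambda(A_1)>\epsilon)\le\epsilon^{-1}\sup_{(u,v)\in P_1}\P(w(u,v)=0)$. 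The same manipulation with $A_2=\{(u,v)\in P_4: w(u,v)=0\}$ reduces the second claim to a bound on $\sup_{(u,v)\in P_4}\P(w(u,v)=1)$. It therefore suffices to show each of these suprema is $O(\sigma^8)$.

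For the first bound, fix $(u,v)\in P_1$. The query point $(t_1,t_2)\in S_1$ uses, by definition of OANLM, the orientation $\theta$ with $\tan\theta=h'(t_1)$, so its neighborhood is disjoint from the edge contour; and by the lemma proved above every neighborhood of a point of $P_1$ lies entirely above the contour. Hence $f\equiv 0$ on both $I_{\theta,\delta_s,\delta_{\ell}}(t_1,t_2)$ and $I_{\theta,\delta_s,\delta_{\ell}}(u,v)$, so their pixelations are pure noise, $\mathbf{y}_{t_1,t_2}(j_1,j_2)=\sigma\mathbf{z}_{t_1,t_2}(j_1,j_2)$ and likewise at $(u,v)$. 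Writing $N\triangleq n_sn_{\ell}$, the distance collapses to $d^2_{\theta,\delta_s,\delta_{\ell}}(dY(t_1,t_2),dY(u,v))=\frac{\sigma^2}{N-1}\sum_{(j_1,j_2)\ne(0,0)}\bigl(\mathbf{z}_{t_1,t_2}(j_1,j_2)-\mathbf{z}_{u,v}(j_1,j_2)\bigr)^2$. When the two neighborhoods are disjoint this is exactly $\frac{2\sigma^2N}{(N-1)\delta_s\delta_{\ell}}\chi^2_{N-1}$, so the event $\{w(u,v)=0\}$ coincides with $\{d^2_{\theta,\delta_s,\delta_{\ell}}>\frac{2N\sigma^2}{\delta_s\delta_{\ell}}+\tau_\sigma\}$, which after rescaling is $\{\frac{1}{N-1}\chi^2_{N-1}-1>t\}$ with $t\triangleq\frac{\tau_\sigma\delta_s\delta_{\ell}}{2N\sigma^2}$. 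Plugging in $\delta_s\delta_{\ell}=8\sigma^2|\log\sigma|^2$, $N=\Theta(|\log\sigma|^2)$ and $\tau_\sigma=2|\log\sigma|^{-1/2}$ gives $t=\Theta(|\log\sigma|^{-1/2})$ and $N-1=\Theta(|\log\sigma|^2)$, and Lemma~\ref{thm:chisq_conc} bounds the probability by $\exp\bigl(-\tfrac{N-1}{2}(t-\ln(1+t))\bigr)=\exp(-\Omega(|\log\sigma|))=O(\sigma^8)$ for the stated constants, uniformly in $(u,v)\in P_1$.

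For the second bound, fix $(u,v)\in P_4$. Now $f\equiv 0$ on the (aligned) neighborhood of $(t_1,t_2)$ while $f\equiv 1$ on every neighborhood of $(u,v)\in P_4$, so each coordinate of $\mathbf{y}_{t_1,t_2}-\mathbf{y}_{u,v}$ equals $-1+\sigma(\mathbf{z}_{t_1,t_2}(j_1,j_2)-\mathbf{z}_{u,v}(j_1,j_2))$; consequently $d^2_{\theta,\delta_s,\delta_{\ell}}$ concentrates around $1+\frac{2N\sigma^2}{\delta_s\delta_{\ell}}$, which (since $\tau_\sigma\to 0$) exceeds the threshold $\frac{2N\sigma^2}{\delta_s\delta_{\ell}}+\tau_\sigma$ by a constant. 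Thus $w(u,v)=1$ forces $d^2_{\theta,\delta_s,\delta_{\ell}}$ below its mean by an amount bounded away from $0$, i.e.\ by $\Theta(\sqrt{N})$ standard deviations of the relevant Gaussian sum; applying the concentration of Lemma~\ref{thm:chisq_conc} to the linear-in-$\mathbf{z}$ and the $\chi^2$-type parts separately shows this probability is $\exp(-\Omega(|\log\sigma|^2))$, which is $o(\sigma^k)$ for every $k$ and in particular $O(\sigma^8)$.

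The one genuinely delicate point is that when $(u,v)$ is close to $(t_1,t_2)$ the two neighborhoods overlap, so the coordinate differences $\mathbf{z}_{t_1,t_2}(j_1,j_2)-\mathbf{z}_{u,v}(j_1,j_2)$ are no longer independent and the clean $\chi^2_{N-1}$ representation fails. The remedy is that overlap is benign: writing each difference as the integral of $dW$ over the symmetric difference of the two $(j_1,j_2)$-th cells shows that the coordinate variances stay $\le\frac{2N}{\delta_s\delta_{\ell}}$ while the off-diagonal covariances become nonpositive, so a Gershgorin estimate controls the operator norm of the covariance matrix and a Laurent--Massart (weighted-$\chi^2$) tail bound gives the same $\exp(-\Omega(|\log\sigma|))$ estimate as in the disjoint case; alternatively one checks a stochastic-domination bound against a scaled $\chi^2$. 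Neighborhoods clipped to $[0,1]^2$ near the image boundary are handled identically, since clipping only deletes coordinates. Combining the two uniform per-pixel bounds with the Markov step of the first paragraph proves the lemma.
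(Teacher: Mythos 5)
Your proof follows the paper's own argument essentially verbatim: a uniform per-pixel bound on $\P(w(u,v)=0)$ (resp.\ $\P(w(u,v)=1)$) via the $\chi^2$ concentration of Lemma~\ref{thm:chisq_conc}, followed by Tonelli and Markov's inequality. You are in fact somewhat more careful than the paper, which omits the $P_4$ case entirely and silently treats the coordinate differences as an i.i.d.\ $\chi^2$ sum even when the two neighborhoods overlap; your observation that under overlap the per-coordinate variances only shrink and the off-diagonal covariances are nonpositive is a legitimate patch for that unstated gap.
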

\begin{proof}
Here we prove the first expression. The proof of the second expression follows along a similar route. Let $(u,v) \in P_1$. Since $\mathbf{z}_{t_1,t_2}(j_1,j_2)$ is the integral 
of the Wiener sheet, we have $\mathbf{z}_{t_1,t_2}(j_1,j_2) \sim N(0, \frac{n_s n_{\ell} \sigma^2}{\delta_s \delta_{\ell}})$. Similarly, $\mathbf{z}_{u,v}(j_1,j_2) \sim N(0, \frac{n_s n_{\ell} \sigma^2}{\delta_s \delta_{\ell}})$. Combining this with Lemma
\ref{thm:chisq_conc} we conclude that
\begin{eqnarray*}
\P\left(d^2_{\theta, \delta_s, \delta_{\ell}}(dY(t_1,t_2), dY(u,v)) - 2\frac{n_s n_{\ell}\sigma^2}{\delta_s \delta_{\ell}} \geq \tau_{\sigma} \right) \leq {\rm e}^{-\frac{n_sn_{\ell} t^2}{4}} 
  = O(\sigma^8).
\end{eqnarray*}
Therefore,
\begin{eqnarray}
\E(\lambda(A_1)) &=& \E \int_{(u,v) \in P_1} I ((u,v) \in A_1) = \int_{(u,v) \in P_1} \P ((u,v) \in A_1) \nonumber \\
 &\geq&\int_{(u,v) \in P_1} \P \left(d^2_{\theta, \delta_s, \delta_{\ell}}(dY(t_1,t_2), dY(u,v)) - 2\frac{n_s n_{\ell}\sigma^2}{\delta_s \delta_{\ell}} \leq \tau_\sigma\right) \nonumber \\
 &=& \lambda(P_1) - O(\sigma^8).
\end{eqnarray}
An upper bound for $\P(\lambda(P_1) - \lambda(A_1) > \epsilon)$ using the Markov inequality yields the result
\[
\P(\lambda(P_1) - \lambda(A_1) > \epsilon) \leq \frac{\E(\lambda(P_1) - \lambda(A_1))}{\epsilon} = O\left(\frac{\sigma^{8}}{\epsilon}\right).
\]
\end{proof}
This lemma indicates that most of the points in $P_1$ may contribute in the estimation of $(t_1,t_2) \in S_1$ and most of the points in $P_4$ do not contribute in the estimate. Define the event $\mathcal{E}$ as 
\[
\mathcal{E} \triangleq \{ \lambda(P_1) - \lambda(A_1) < \sigma^2 \} \cap \{ \lambda(P_4) - \lambda(A_2) < \sigma^2 \}. 
\]
Note that according to Lemma \ref{lem:lebesgue}, $\P(\mathcal{E}^c) = O(\sigma^6)$. As we will see later this lemma simplifies the calculation of the risk dramatically. For notational simplicity we also define the following notations:
\begin{eqnarray*}
wX du dv &\triangleq& w(u,v) X(u,v) dudv, \nonumber \\
X du dv &\triangleq& X(u,v) du dv, \nonumber \\
w du dv &\triangleq& w(u,v) du dv, \nonumber \\
Z du dv &\triangleq& Z(u,v) dudv, \\
wZ dudv &\triangleq& w(u,v) Z(u,v)du dv.
\end{eqnarray*}

The risk of the OANLM estimator at $(t_1,t_2) \in S_1$ is then given by
\[
\E\left( \frac{\int_S wX du dv}{\int_S wdudv}\right)^2.
\] 
Define $U = \left( \frac{\int_S wX du dv}{\int_S wdudv}\right)^2$. We have
\begin{eqnarray}\label{eq:pcase11}
\E(U) = \E( U \ | \ \mathcal{E}) \P(\mathcal{E}) + \E(U \ | \ \mathcal{E}^c) \P(\mathcal{E}^c) \leq \E(U \ | \ \mathcal{E}) \P(\mathcal{E}) + \P(\mathcal{E}^c).
\end{eqnarray}
Lemma \ref{lem:lebesgue} proves that $ \P(\mathcal{E}^c) = O(\sigma^6)$. Also, since for $(u,v) \in A_1$ $w(u,v)=1$, we have
\begin{eqnarray}\label{eq:lebmark1}
\lefteqn{\left|\int_{P_1} wX du dv - \int_{P_1} X du dv \right|} \nonumber \\
&=&\left| \int_{P_1} wX du dv - \int_{A_1}wXdudv + \int_{A_1}Xdudv- \int_{P_1} X du dv\right| \nonumber \\
&= &  \left| \int_{P_1} wX du dv - \int_{A_1}wXdudv \right|+ \left|\int_{A_1}Xdudv- \int_{P_1} X du dv\right| \nonumber \\
&=& O(  \lambda(P_1\backslash A_1)).
\end{eqnarray}
For the last inequality we have assumed that the estimate is bounded over the $P_1$ region. This assumption can be also justified
by calculating the probability that this condition does not hold and showing that the probability is negligible. Using arguments similar to \eqref{eq:lebmark1}
it is straightforward to prove that

\begin{eqnarray}\label{lem:lebmark2}
&&\left|\int_{P_4} wX du dv  \right| = O(  \lambda(P_4\backslash A_2)). \nonumber \\
&&\left|\int_{P_1} w du dv - \int_{P_1}  du dv \right| = O(  \lambda(P_1\backslash A_1)). \nonumber\\
&&\left|\int_{P_4} w du dv \right| = O(  \lambda(P_4\backslash A_2)). 
\end{eqnarray}

Define $P_{14} \triangleq P_1 \cup P_4$ and $B \triangleq S \backslash P_{14}$. We now calculate an upper bound for the first term of \eqref{eq:pcase11} 
\begin{eqnarray}\label{eq:upporac1}
\lefteqn{ \E( U \ | \ \mathcal{E}) \P(\mathcal{E}) } \nonumber \\
&\overset{\rm (a)}{=}& \E\left( \left( \frac{\int_{P_{14}} wXdu dv + \int_{B} wXdudv}{\int_{P_{14}} wdu dv + \int_{B} wdudv}\right)^2 \  \Big| \ \mathcal{E}   \right)\P(\mathcal{E}) \nonumber \\
&= & \E\left( \left( \frac{\int_{P_{1}} Xdu dv + \int_{B} wXdudv}{\int_{P_{1}} du dv + \int_{B} wdudv}\right)^2 \  \Big| \ \mathcal{E}   \right)\P(\mathcal{E}) +O(\sigma^2) \nonumber \\
 &\leq &\E\left( \frac{\int_{P_{1}} Xdu dv + \int_{B} wXdudv}{\int_{P_{1}} du dv + \int_{B} wdudv}\right)^2  +O(\sigma^2) \nonumber \\
 &\overset{\rm (b)}{\leq} & \E\left( \frac{ \int_{B} wFdudv}{\int_{P_{1}} du dv + \int_{B} wdudv}\right)^2 
  +  \E\left( \frac{\int_{P_{1}} Zdu dv + \int_{B} wZdudv}{\int_{P_{1}} du dv + \int_{B} wdudv}\right)^2 \nonumber \\
 && \! \! \! \!+ \, 2 \sqrt{\E\left( \frac{ \int_{B} wFdudv}{\int_{P_{1}} du dv + \int_{B} wdudv}\right)^2 } \sqrt{ \E\left( \frac{\int_{P_{1}} Zdu dv + \int_{B} wZdudv}{\int_{P_{1}} du dv + \int_{B} wdudv}\right)^2} \nonumber\\
 && \! \! \! \!+ \,O(\sigma^2).
 \end{eqnarray}
 Equality (a) is an application of \eqref{eq:lebmark1} and \eqref{lem:lebmark2}. To obtain (b) we plug in the value of $X = F+ Z$, where $F$ and $Z$ represent signal and noise respectively. In the next two lemmas, Lemma \ref{eq:upporac1} and Lemma \ref{lem:variance}, we obtain upper bounds for individual terms in \eqref{eq:upporac1}.

\begin{lem}\label{lem:bias}
Let $w({u,v})$ be the weights of OANLM with $\delta_{\ell}, \delta_s, n_s, n_{\ell}$, and $\tau_{\sigma}$ as specified
in Lemma \ref{lem:lebesgue}. Then
\begin{eqnarray*}
\E\left( \frac{ \int_{B} wFdudv}{\int_{P_{1}} du dv + \int_{B} wdudv}\right)^2  \leq O(\sigma^{4/3} |\log(\sigma)|^{4/3}).
\end{eqnarray*}
\end{lem}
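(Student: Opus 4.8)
The plan is to bound the bias term $\E\big( \int_B wF\,du\,dv / (\int_{P_1} du\,dv + \int_B w\,du\,dv)\big)^2$ by separately controlling the numerator and the denominator. For the denominator, the key observation is that $\lambda(P_1) = \Theta(1)$: the region $P_1$ consists of all points lying at vertical distance at least $2\delta_\ell + (C/2)\delta_s$ above the edge contour, and since $\delta_\ell, \delta_s \to 0$, this region has Lebesgue measure bounded below by a constant (for $\sigma$ small enough). Hence $\int_{P_1} du\,dv + \int_B w\,du\,dv \geq \lambda(P_1) = \Omega(1)$, and the whole expression is $O\big((\int_B wF\,du\,dv)^2\big)$.

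Next I would split the numerator $\int_B wF\,du\,dv$ according to the partition of $B = S \setminus (P_1 \cup P_4)$ into the strip $S_2$ and the ``transition'' slabs $P_2$ and $P_3$. For $(t_1,t_2) \in S_1$ with its neighborhood aligned to the edge, the neighborhood does not intersect the edge, so $F(u,v)$ over $(u,v) \in P_2$ equals the constant value $1$ of $f$ above the edge up to the negligible fluctuation caused by the pixelation integral near the edge; more importantly, one shows that points $(u,v)$ whose own aligned neighborhoods lie strictly above the edge will, with high probability, receive weight $w(u,v)=1$, and the corresponding $F$ values are close to $F(t_1,t_2)$. The genuinely problematic contribution comes from points in $S_2 \cup P_2 \cup P_3$ whose neighborhoods straddle the edge: these have $\lambda(S_2 \cup P_2 \cup P_3) = O(\delta_\ell)$ (a slab of vertical width $O(\delta_\ell)$ along a contour of length $O(1)$), and for such points $w(u,v)$ can be nonzero while $F(u,v)$ differs from the ``true'' value at $(t_1,t_2)$ by $O(1)$. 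Thus the numerator is bounded by $O(\lambda(S_2 \cup P_2 \cup P_3)) = O(\delta_\ell) = O(\sigma^{2/3}|\log\sigma|^{2/3})$, and squaring gives $O(\sigma^{4/3}|\log\sigma|^{4/3})$, exactly the claimed rate.

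The main obstacle, and the step I would spend the most care on, is making rigorous the claim that the bias is controlled by the measure of the edge-straddling slab rather than something larger. One must show two things: (i) for a point $(u,v)$ far from the edge (in $P_1$) whose aligned neighborhood is edge-free, the anisotropic distance $d^2_{\theta,\delta_s,\delta_\ell}(dY(t_1,t_2), dY(u,v))$ concentrates below the threshold with overwhelming probability — this is precisely Lemma \ref{lem:lebesgue} combined with the $\chi^2$ concentration of Lemma \ref{thm:chisq_conc}, so that the weighted average over $P_1$ essentially reproduces the unweighted average; and (ii) the contribution of the few points in $B$ that are close to the edge but receive nonzero weight is $O(\lambda(S_2 \cup P_2 \cup P_3))$ — here one needs that even if all such points somehow get weight $1$, their total measure is $O(\delta_\ell)$, and the signal values $F$ there are uniformly bounded (by $1$, since $f$ is $\{0,1\}$-valued). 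Conditioning on the event $\mathcal{E}$, as set up before the lemma, is what lets us replace $\int_{P_1} wX$ by $\int_{P_1} X$ up to $O(\sigma^2)$ error and keeps the denominator bounded below; the remaining work is the geometric estimate $\lambda(S_2 \cup P_2 \cup P_3) = O(\delta_\ell)$, which follows from the Hölder-$1$ bound $|h'| \leq 1$ on the edge contour together with the definitions of $S_2, P_2, P_3$.
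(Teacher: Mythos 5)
Your proposal is correct and ultimately uses the same argument as the paper: bound the numerator by $\lambda(B)\cdot\sup|wF|\le\lambda(B)=O(\delta_\ell)$, bound the denominator below by $\lambda(P_1)=\Omega(1)$, and square to get $O(\sigma^{4/3}|\log\sigma|^{4/3})$. The additional machinery you invoke (concentration of the $\chi^2$ distance, conditioning on $\mathcal{E}$) is not actually needed here, since the final estimate is exactly this crude worst-case bound, which is what the paper's two-line proof does.
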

\begin{proof}
We have  
\begin{eqnarray*}
\lefteqn{\E\left( \frac{ \int_{B} wFdudv}{\int_{P_{1}} du dv + \int_{B} wdudv}\right)^2 \overset{(a)}{\leq} \E\left( \frac{ \int_{B} dudv}{\int_{P_{1}} du dv }\right)^2} \\
 &= & \left(\frac{\lambda(B)}{\lambda(P_1)}\right)^2 = O(\sigma^{4/3} |\log(\sigma)|^{4/3}).
\end{eqnarray*}
To obtain Inequality $(a)$ we maximize the numerator and minimize the denominator independently. The last equality is due to the fact that $\lambda(B) = O(\delta_{\ell})$.
\end{proof}

\begin{lem}\label{lem:variance}
Let $w({u,v})$ be the weights of OANLM with $\delta_{\ell}, \delta_s, n_s, n_{\ell}$, and $\tau_{\sigma}$ as specified in Lemma \ref{lem:lebesgue}. Then
\begin{eqnarray*}
\E\left( \frac{\int_{P_1} Zdudv+ \int_{B} wZdudv}{\int_{P_{1}} du dv + \int_{B} wdudv}\right)^2  \leq O(\sigma^{4/3} |\log(\sigma)|^{4/3}).
\end{eqnarray*}
\end{lem}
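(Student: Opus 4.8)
The plan is to peel the numerator apart along the partition $S = P_1 \cup B \cup P_4$ (with $B = S\setminus(P_1\cup P_4)$ as above) that was already used to reach \eqref{eq:upporac1}, bound the two resulting second moments separately, and glue them back together with $(a+b)^2 \le 2a^2+2b^2$. The denominator is bounded below \emph{deterministically}, $\int_{P_1} du\,dv + \int_B w\,du\,dv \ge \lambda(P_1) = \Theta(1)$ (as in the proof of Lemma~\ref{lem:bias}), so it is enough to establish
\[
\E\Big(\int_{P_1} Z\,du\,dv\Big)^{2} = O(\sigma^2)
\qquad\text{and}\qquad
\E\Big(\int_{B} wZ\,du\,dv\Big)^{2} = O\big(\sigma^{4/3}|\log\sigma|^{4/3}\big).
\]
Here $Z(u,v)$ is the noise part of $X(u,v)$, as in the decomposition $X = F + Z$ that follows \eqref{eq:upporac1}; with the parameter choices of Lemma~\ref{lem:lebesgue} the pixel cells have area $\delta_s\delta_\ell/(n_sn_\ell) = \sigma^2$, so that (i)~$\E Z(u,v)^2 = \Theta(1)$ and (ii)~for any \emph{deterministic} region $R$ the stochastic Fubini theorem writes $\int_R Z\,du\,dv$ as a Wiener integral against a kernel that is bounded by one and supported in an $o(1)$-fattening of $R$, whence $\E(\int_R Z\,du\,dv)^2 = O(\sigma^2\lambda(R))$.

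First I would dispose of the bulk term: $P_1$ is a deterministic region over which no weights survive, so by (ii), $\E(\int_{P_1} Z\,du\,dv)^2 = O(\sigma^2\lambda(P_1)) = O(\sigma^2)$; dividing by $\lambda(P_1)^2 = \Theta(1)$ leaves $O(\sigma^2) = o(\sigma^{4/3}|\log\sigma|^{4/3})$. Heuristically, over the bulk the noise is averaged over an area $\Theta(1)$ and all but a $\sigma^2$ fraction of its energy cancels.

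The transition band $B$ carries the real difficulty, and I expect it to be the main obstacle: the weights $w(u,v)$ on $B$ are functions of the noisy data and hence are correlated with the very field $Z$ being integrated, so $\{(u,v)\in B : w(u,v)=1\}$ cannot be frozen into a deterministic set and the cancellation that would give a bound of order $\sigma^2\lambda(B)$ is unavailable. I would instead settle for the sign-free estimate $\big|\int_B wZ\,du\,dv\big| \le \int_B |Z|\,du\,dv$ followed by Cauchy--Schwarz,
\[
\E\Big(\int_B |Z|\,du\,dv\Big)^2 \le \lambda(B)\int_B \E\big(Z(u,v)^2\big)\,du\,dv = \Theta\big(\lambda(B)^2\big),
\]
using (i). Since $B = P_2 \cup S_2 \cup P_3$ is a vertical band of width $4\delta_\ell + C\delta_s = O(\delta_\ell)$ about the graph of $h$, $\lambda(B) = O(\delta_\ell) = O(\sigma^{2/3}|\log\sigma|^{2/3})$, so $\E(\int_B wZ\,du\,dv)^2 = O(\delta_\ell^2) = O(\sigma^{4/3}|\log\sigma|^{4/3})$, and dividing by $\lambda(P_1)^2 = \Theta(1)$ preserves the rate. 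It is worth stressing that this crude bound is only \emph{just} good enough --- it lands exactly on $\delta_\ell^2 \asymp \sigma^{4/3}|\log\sigma|^{4/3}$, i.e.\ the same order as the bias bound of Lemma~\ref{lem:bias} --- which is precisely the balance that the quadratic scaling $\delta_s = \delta_\ell^2$ with $\delta_\ell\asymp\sigma^{2/3}|\log\sigma|^{2/3}$ is built to achieve.

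Combining the two estimates through $(a+b)^2 \le 2a^2+2b^2$ yields $O(\sigma^2) + O(\sigma^{4/3}|\log\sigma|^{4/3}) = O(\sigma^{4/3}|\log\sigma|^{4/3})$, which is the assertion of the lemma; the $B$-term dominates, so on $S_1$ the variance of OANLM is controlled entirely by the thin band straddling the edge contour.
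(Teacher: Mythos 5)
Your proposal is correct and follows essentially the same route as the paper: both lower-bound the denominator deterministically by $\lambda(P_1)=\Theta(1)$ (the paper does this by dropping the nonnegative $\int_B w\,du\,dv$), both exploit the short-range correlation of $Z$ to get $O(\sigma^2)$ for the $P_1$ term, and both give up on cancellation over $B$ --- your pointwise bound $|wZ|\le|Z|$ followed by Cauchy--Schwarz is interchangeable with the paper's Cauchy--Schwarz bound $\E(wZ\,w'Z')\le\sqrt{\E(wZ)^2}\sqrt{\E(w'Z')^2}$, and both land on the same $\lambda(B)^2=O(\delta_\ell^2)$ estimate. The only cosmetic difference is that you combine the two pieces via $(a+b)^2\le 2a^2+2b^2$ while the paper expands the square and handles the cross term with Cauchy--Schwarz; your observation that the $B$ term dominates and sits exactly at the target rate matches the paper's accounting.
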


\begin{proof}
Since $ \int_{B} wdudv \geq 0$, and we are interested in the upper bound of the risk, we can remove it from the denominator to obtain
\begin{eqnarray*}
\lefteqn{\E\left( \frac{\int_{P_1} Zdudv+ \int_{B} wZdudv}{\int_{P_{1}} du dv + \int_{B} wdudv}\right)^2  \leq \E\left( \frac{\int_{P_1} Zdudv+ \int_{B} wZdudv}{\int_{P_{1}} du dv}\right)^2} \\
&\leq& \underbrace{\E\left( \frac{\int_{P_1} Zdudv}{\int_{P_{1}} du dv}\right)^2} _{V_1}+ \underbrace{\E\left( \frac{\int_{B} wZdudv}{\int_{P_1} du dv }\right)^2}_{V_2} \\
&& + \underbrace{\ 2 \sqrt{\E\left( \frac{\int_{P_1} Zdudv}{\int_{P_{1}} du dv}\right)^2} \sqrt{ \E\left( \frac{\int_{B} wZdudv}{\left(\int_{P_{1}} du dv\right)^2 }\right)^2}}_{V_3}.
\end{eqnarray*}
Now we obtain upper bounds for $V_1$,$V_2$, and $V_3$ separately. First, we have
\begin{eqnarray}\label{eq:uppv1}
V_1 &=& \E\left( \frac{\int_{P_1} Zdudv}{\int_{P_{1}} du dv}\right)^2= \E\left( \frac{\int_{P_1}\int_{P_1} Z(u,v)Z(u',v')du'dv'dudv}{\left(\int_{P_{1}} du dv\right)^2}\right) \nonumber\\
&\overset{(b)}{=}& \left( \frac{\int_{P_1}\int_{I_{\theta, \frac{2\delta_s}{n_s}, \frac{2\delta_\ell}{n_{\ell}} }(u,v)}  \E(Z(u,v)Z(u',v'))du'dv'dudv}{\left(\int_{P_{1}} du dv\right)^2}\right) \nonumber \\
&\overset{(c)}{\leq} & \left( \frac{\int_{P_1}\int_{I_{\theta, \frac{2\delta_s}{n_s}, \frac{2\delta_\ell}{n_{\ell}} }(u,v)}  \frac{n_s n_{\ell} \sigma^2}{\delta_s \delta_{\ell}}du'dv'dudv}{\left(\int_{P_{1}} du dv\right)^2}\right) = O\left(\sigma^2 \right).
\end{eqnarray}
Equality (b) is due to the fact that if $(u',v') \notin I_{\theta, \frac{2\delta_s}{n_s}, \frac{2\delta_\ell}{n_{\ell}} }(u,v)$ then $Z(u,v)$ and $Z(u',v')$ will be independent and hence $\E(Z(u,v)Z(u',v')) = 0$. Inequality (c)  is due to the Cauchy-Schwartz inequality $|\E(Z(u,v)Z(u',v'))| \leq \sqrt{\E Z^2(u,v)}\sqrt{\E Z^2(u',v')}$.  \\

\noindent To obtain an upper bound for $V_2$, we first note that 
\begin{eqnarray}
\lefteqn{\E(w(u,v)Z(u,v)w(u',v')Z(u',v'))} \nonumber \\
&\leq& \sqrt{\E(w(u,v)Z(u,v))^2} \sqrt{\E(w(u',v')Z(u',v'))^2} = \frac{n_s n_{\ell} \sigma^2}{ \delta_s \delta_{\ell}},
\end{eqnarray}
and hence,
\begin{eqnarray}\label{eq:uppv2}
V_2 &=& \E\left( \frac{\int_{B} wZdudv}{\int_{P_1} du dv }\right)^2 \nonumber \\
&=&\E \left( \frac{\int_{B} \int_B w(u,v)Z(u,v) w(u',v')Z(u',v')dudvdu'dv'}{(\int_{P_1} du dv)^2 }\right) \nonumber \\
&\leq& \left( \frac{\int_{B} \int_B   \frac{n_s n_{\ell} \sigma^2}{ \delta_s \delta_{\ell}} dudvdu'dv'}{\left(\int_{P_1} du dv \right)^2 }\right) = O(\delta_{\ell}^2).
\end{eqnarray}
Note that the last inequality is due to the fact that $\lambda(B) = O(\delta_{\ell})$. Using \eqref{eq:uppv1} and \eqref{eq:uppv2} it is straightforward to obtain an upper bound for $V_3$ by using the Cauchy-Schwartz inequality. Combining the upper bounds for $V_1$, $V_2$, and
$V_3$ completes the proof.
\end{proof}

Combining \eqref{eq:upporac1} with Lemmas \ref{lem:bias} and \ref{lem:variance} establishes the following upper bound for Case I, where $(t_1,t_2) \in S_1$:
\begin{eqnarray}
\E \left(f(t_1,t_2)- \frac{\int_S w(u,v) X(u,v)dudv}{\int_S w(u,v)dudv} \right)^2 = O(\sigma^{4/3} |\log(\sigma)|^{4/3}).
\end{eqnarray}


\noindent Case II: $(t_1,t_2) \in S_2$. In this case we assume that the risk is bounded by $1$, since the function $f$ is bounded between $0$ and $1$. If the estimate
is out of this range, then we will map it to either $0$ or $1$.


\noindent Case III: $(t_1,t_2) \in S_3$. This case is exactly the same as Case I , and hence we skip the proof.

Finally, combining our results for Cases I, II, and III, we can calculate an upper bound for the risk of OANLM as 
\begin{eqnarray*}
R(f, \hat{f}^O) &=& \E(\|f- \hat{f}^O\|^2) = \int_{S_1 \cup S_3} (f-\hat{f}^O)^2 dt_1 dt_2 + \int_{S_2} (f-\hat{f}^O)^2 dt_1 dt_2 \nonumber \\
&=& \lambda(S_1 \cup S_3)O(\sigma^{4/3} |\log(\sigma)|^{4/3}) + \lambda(S_2) O(1) = O(\sigma^{4/3} |\log(\sigma)|^{4/3}). 
\end{eqnarray*}
This ends the proof of Theorem \ref{thm:oracleANLM}.
  
\subsection{Proof of Corollary \ref{cor:nlm}}\label{sec:proofupnlm}

The proof of this corollary follows exactly the same route as that of Theorem \ref{thm:oracleANLM}. We merely redefine the regions
$S_k$ and $P_k$ for $k \in \{1,2,3 \}$. The new definition of the $S_k$ regions for the NLM algorithm is given by
\begin{eqnarray*}
S_1^{n} &\triangleq& \{(t_1,t_2) \ | \ t_2> h(t_1)+ 2\delta  \}, \\
S_3^{n} &\triangleq& \{(t_1,t_2) \ | \ t_2< h(t_1)- 2\delta  \}, 
\end{eqnarray*}
and $S_2^{n} \triangleq S \backslash S^n_1 \cup S^n_3$. Since the neighborhoods in NLM are isotropic, we require a single parameter to describe the neighborhood size, $\delta = \delta_{s} = \delta_{\ell}$. For notational simplicity we have assumed that $\frac{C}{2} \delta^2 < \delta$. This is a valid assumption for small enough values of $\sigma$, since $\delta \rightarrow 0$ as $\sigma \rightarrow 0$. This assumption implies that the neighborhood of the pixels in $S_1^n$ and $S_3^n$ does not intersect with the edge contour. Since the neighborhoods no longer have different anisotropic lengths, the size of the intersection of the neighborhoods and the edge contour is identical in all directions and there is no need to define the regions, $P_1, \ldots, P_4$. Equivalently, we set $P_1 = S_1^n$, $P_4= S_3^n$ and $P_2 = P_3 = \emptyset$. With these definitions the rest of the proof follows along the exact same lines as the proof of Theorem \ref{thm:oracleANLM}. In particular one can show that the risk of a pixel $(t_1,t_2) \in S_1^n \cup S_3^n$ is $O(\sigma^2\log^2(\sigma))$, and the risk of a point $(t_1,t_2) \in S_2^n$ is $O(1)$. Furthermore, $\lambda(S_2) = O(2\delta)= O(\sigma \log(\sigma)|)$. Therefore the entire risk of NLM is equal to 
\begin{eqnarray*}
R(f, \hat{f}^N) &=&\int_{S_1 \cup S_3} (f-\hat{f}^N)^2 dt_1 dt_2 + \int_{S_2} (f-\hat{f}^N)^2 dt_1 dt_2 \nonumber \\
&=& \lambda(S_1 \cup S_3)O(\sigma^2 \log^2(\sigma)) + \lambda(S_2) O(1) = O(\sigma |\log(\sigma)|). 
\end{eqnarray*}

\subsection{Proof of Theorem \ref{thm:thetamismatch}}
The proof of this theorem is very similar to the proof of Theorem \ref{thm:oracleANLM}. The only difference is in the definitions of $S_k$ and $P_k$ for $k \in \{1,2,3 \}$.
Since there is a mismatch between the orientations of the neighborhood and the  edge contour, the neighborhood of a point in $S_1$ may intersect the edge contour. In order
to fix this, we define the new regions called $S_i^{\beta}$ and $P_i^{\beta}$. If the error in $\theta$ is upper bounded by $c_{\beta} \sigma^{\beta}$, then define
\begin{eqnarray*}
S_1^{\beta} &\triangleq& \{(t_1,t_2) \ | \ t_2> h(t_1)+c_{\beta}\sigma^{\beta} \delta_{\ell} + \delta_s + (C/2) \delta_{\ell}^2  \}, \\
S_3^{\beta} &\triangleq & \{(t_1,t_2) \ | \ t_2< h(t_1)-c_{\beta}\sigma^{\beta} \delta_{\ell} - \delta_s - (C/2) \delta_{\ell}^2  \}, 
\end{eqnarray*}
and $S_2^{\beta} \triangleq S \backslash S^{\beta}_1 \cup S^{\beta}_3$. Let $(t_1,t_2) \in S_1^\beta$ and set $\theta = \arctan(h'(t_1))$. Consider a directional neighborhood of $(t_1,t_2)$ with direction $\hat{\theta}$, where $|\hat{\theta}- \theta| \leq c_{\beta} \sigma^{\beta}$. Then it is straightforward to confirm that 
\[
I_{\hat{\theta}, \delta_s, \delta_{\ell}}(t_1,t_2) \subset S_1^{\beta}.
\]
Furthermore, define $P_1^{\beta}\triangleq P_1$, $P_4^{\beta}\triangleq P_4$, $P_2^{\beta} \triangleq S^{\beta}_1 \backslash P_1$, and
$P_3^{\beta} \triangleq S^{\beta}_3 \backslash P_4$. Note that the risk of a point $(t_1,t_2) \in S_1^{\beta}$ can be exactly calculated as in the proof of Theorem \ref{thm:oracleANLM} and therefore, it is upper bounded by $O(\sigma^{4/3} |\log(\sigma)|^{4/3})$. Also, the risk of the pixels in $S_2^{\beta}$ upper bounded by $O(1)$. Finally the Lebesgue measure of $S_2^{\beta}$ is $O(c_{\beta}\sigma^{\beta} \delta_{\ell} + \delta_s + (C/2) \delta_{\ell}^2)$. Combining these facts establishes the claimed upper bound.

\subsection{Proof of Theorem \ref{thm:ANLM}}

Since the proof is mostly similar to that of Theorem \ref{thm:oracleANLM}, we shall focus on
the major differences. The first difference is that we consider the regions $P_1$--$P_4$ instead of $S_{1}$--$S_{3}$. Previously the regions $P_{1}$ and $P_{2}$ were treated jointly under the region $S_{1}$. Instead, we shall now consider $P_1$ and $P_2$ separately, and their differences shall become progressively apparent.

\noindent Case I: $(t_1,t_2) \in {P}_1$. We start with the calculation of the weights $w({u,v})$ for $(u,v) \in {P}_1 \cup {P}_4$. Define 
\begin{eqnarray*}
R_1 \triangleq \{(u,v) \in P_1 \ | \ w^D({u,v}) = 1 \}, \\
R_2 \triangleq \{(u,v) \in P_4 \ | \ w^D({u,v}) = 0 \}.
\end{eqnarray*}

Intuitively speaking, we expect most of the pixels in $P_1$ to contribute to the estimate of the pixel at $(t_1,t_2)$, since all their directional neighborhoods are very similar to the directional neighborhoods of $(t_1,t_2)$. This implies that $R_1$ is ``close'' to $P_1$.  Also, we expect that most of the pixels in $P_4$ do not contribute in the estimate since all their directional neighborhoods are different from those of $(t_1,t_2) \in P_1$. Lemma \ref{lem:lebR_1} provides a formal statement of this intuition. 

\begin{lem} \label{lem:lebR_1}
Let $\delta_s, \delta_{\ell}, n_s, n_{\ell},t$ be as defined in Lemma \ref{lem:lebesgue} and let $q=\pi \sigma^{-2/3}$ in the DANLM algorithm. Then,
\begin{eqnarray*}
\P(\lambda(P_1) - \lambda(R_1) > \epsilon) &\leq& O\left(\frac{\sigma^8}{\epsilon}\right), \\
\P(\lambda(P_4) - \lambda(R_2) > \epsilon) &\leq& O\left(\frac{ \sigma^{22/3}}{\epsilon}\right).
\end{eqnarray*}
\end{lem}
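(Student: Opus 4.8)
\noindent\emph{Proof sketch.} The plan is to bound the two measures separately. The first estimate is, after passing to the discrete angle set $\mathcal{O}$, essentially the first bound of Lemma~\ref{lem:lebesgue} again, because here taking the minimum over $\mathcal{O}$ can only help. Fix $(t_1,t_2)\in P_1$ and let $(u,v)\in P_1$. By the lemma asserting that any neighborhood of a pixel in $P_1$ lies entirely above the edge contour, for \emph{every} $\theta\in\mathcal{O}$ both $I_{\theta,\delta_s,\delta_{\ell}}(t_1,t_2)$ and $I_{\theta,\delta_s,\delta_{\ell}}(u,v)$ lie in $\{f=0\}$, so $d^2_{\theta,\delta_s,\delta_{\ell}}(f(t_1,t_2),f(u,v))=0$ for every such $\theta$. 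Since $d^2_{\mathcal{A}}=\min_{\theta\in\mathcal{O}}d^2_{\theta,\delta_s,\delta_{\ell}}\le d^2_{\theta_0,\delta_s,\delta_{\ell}}$ for any fixed $\theta_0\in\mathcal{O}$, and the latter is exactly the quantity controlled in the proof of Lemma~\ref{lem:lebesgue}, the same application of the $\chi^2$ concentration bound (Lemma~\ref{thm:chisq_conc}) gives $\P\big((u,v)\notin R_1\big)\le\P\big(d^2_{\theta_0,\delta_s,\delta_{\ell}}(dY(t_1,t_2),dY(u,v))>\tfrac{2n_sn_{\ell}\sigma^2}{\delta_s\delta_{\ell}}+\tau_\sigma\big)=O(\sigma^8)$. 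Integrating over $(u,v)\in P_1$ and using $\lambda(P_1)\le 1$ yields $\E[\lambda(P_1)-\lambda(R_1)]=O(\sigma^8)$, and Markov's inequality gives the first claim, exactly as in Lemma~\ref{lem:lebesgue}.

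For the second estimate the minimum over $\mathcal{O}$ works against us, and a union bound over the $q+1=\Theta(\sigma^{-2/3})$ directions is required. Fix $(t_1,t_2)\in P_1$ and let $(u,v)\in P_4$. By the analog for $P_4$ of the lemma above (with the roles of above/below the edge interchanged), for every $\theta\in\mathcal{O}$ the neighborhood $I_{\theta,\delta_s,\delta_{\ell}}(u,v)$ lies entirely in $\{f=1\}$ while $I_{\theta,\delta_s,\delta_{\ell}}(t_1,t_2)$ lies entirely in $\{f=0\}$; hence the two pixelated signal neighborhoods differ by $1$ in every entry and $d^2_{\theta,\delta_s,\delta_{\ell}}(f(t_1,t_2),f(u,v))=1$ for every $\theta\in\mathcal{O}$. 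Therefore $\E\,d^2_{\theta,\delta_s,\delta_{\ell}}(dY(t_1,t_2),dY(u,v))=1+\tfrac{2n_sn_{\ell}\sigma^2}{\delta_s\delta_{\ell}}$, which exceeds the threshold $\tfrac{2n_sn_{\ell}\sigma^2}{\delta_s\delta_{\ell}}+\tau_\sigma$ by $1-\tau_\sigma=\Omega(1)$. Decomposing $d^2_{\theta,\delta_s,\delta_{\ell}}(dY(t_1,t_2),dY(u,v))$ into its deterministic signal term, a mean-zero Gaussian cross term of variance $\Theta(1/(n_sn_{\ell}))$, and a rescaled $\chi^2_{n_sn_{\ell}-1}$ noise term of mean $\tfrac{2n_sn_{\ell}\sigma^2}{\delta_s\delta_{\ell}}$, Lemma~\ref{thm:chisq_conc} and a Gaussian tail bound give, for each fixed $\theta$,
\[
\P\Big(d^2_{\theta,\delta_s,\delta_{\ell}}(dY(t_1,t_2),dY(u,v))\le\tfrac{2n_sn_{\ell}\sigma^2}{\delta_s\delta_{\ell}}+\tau_\sigma\Big)\le{\rm e}^{-\Omega(n_sn_{\ell})}={\rm e}^{-\Omega(|\log\sigma|^2)}=O(\sigma^8)
\]
for $\sigma$ small enough. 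Since $(u,v)\notin R_2$ precisely when this event occurs for at least one $\theta\in\mathcal{O}$, a union bound over the $q+1=\Theta(\sigma^{-2/3})$ angles gives $\P\big((u,v)\notin R_2\big)=O(\sigma^{-2/3}\sigma^8)=O(\sigma^{22/3})$. Integrating over $(u,v)\in P_4$ and applying Markov's inequality yields the second claim.

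The main obstacle is precisely the union bound in the second part: because $d^2_{\mathcal{A}}$ is a minimum over $\Theta(\sigma^{-2/3})$ candidate directions, a distant pixel of $P_4$ could be wrongly declared a contributor whenever any one of these directions produces a favorable fluctuation, so all of them must be controlled simultaneously. What rescues the argument is that the gap between $\E\,d^2_{\theta,\delta_s,\delta_{\ell}}$ and the threshold is $\Omega(1)$ --- rather than the vanishing $\Theta(\tau_\sigma)$ gap relevant for $R_1$ --- uniformly in $\theta$ and in $(u,v)\in P_4$; this uniformity is exactly what the $P_4$ margin $2\delta_{\ell}+(C/2)\delta_s$ provides, since it forces every directional neighborhood of a $P_4$ pixel to miss the edge regardless of $\theta$. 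Consequently the per-angle failure probability is ${\rm e}^{-\Omega(n_sn_{\ell})}={\rm e}^{-\Omega(|\log\sigma|^2)}$, which is super-polynomially small and therefore absorbs the polynomial factor $q+1=\Theta(\sigma^{-2/3})$. The remaining bookkeeping --- the exact constants in the $\chi^2$ tail and in the cross-term bound --- is routine and mirrors the corresponding steps in the proof of Theorem~\ref{thm:oracleANLM}.
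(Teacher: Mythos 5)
Your proof is correct and follows essentially the same route as the paper: for $R_1$ you lower-bound the minimum over $\mathcal{O}$ by a single angle and apply the $\chi^2$ concentration of Lemma~\ref{thm:chisq_conc} to get a per-point failure probability of $O(\sigma^8)$, and for $R_2$ you take a union bound over the $\Theta(\sigma^{-2/3})$ angles, exactly as in the paper's displayed chains, followed in both cases by the expectation-plus-Markov step of Lemma~\ref{lem:lebesgue}. The only difference is that you spell out the per-angle bound for $P_4$ (the $\Omega(1)$ signal gap and the signal/cross-term/$\chi^2$ decomposition), which the paper leaves implicit.
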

\begin{proof}
We use the technique we developed in Lemma \ref{lem:lebesgue} for the proof. Consider a point $(u,v) \in P_1$ and let $\theta^*  \triangleq \arctan(h'(t_1))$ and $\hat{\theta}$ represent an angle from $\mathcal{O}$ that is closest to $\theta^*$. Then,
\begin{eqnarray}\label{eq:p1nooverlap}
\lefteqn{\P\left(d^2_{\mathcal{A}}(dY(t_1,t_2), dY(u,v))- \frac{2 n_s n_{\ell} \sigma^2}{\tau_s \tau_{\ell}} \leq \tau_{\sigma}\right) } \nonumber \\
&=&  \P \left( \min_{\theta' \in \mathcal{O}} d^2_{\theta', \delta_s, \delta_{\ell}}(dY(t_1,t_2), dY(u,v))- \frac{2 n_s n_{\ell} \sigma^2}{\tau_s \tau_{\ell}} \leq \tau_{\sigma}\right) \nonumber \\
&\geq&  \P \left( d^2_{\hat{\theta}, \delta_s, \delta_{\ell}}(dY(t_1,t_2), dY(u,v))- \frac{2 n_s n_{\ell} \sigma^2}{\tau_s \tau_{\ell}} \leq \tau_{\sigma}\right)  \nonumber \\
&\overset{(d)}{\geq}& 1- {\rm e}^{-\frac{n_s n_\ell \tau_{\sigma}^2}{4}} = 1- O(\sigma^8),
\end{eqnarray}
where Inequality (d) is a direct consequence of Lemma \ref{thm:chisq_conc}. Again as in the proof of Lemma \ref{lem:lebesgue} we conclude that
$\E(\lambda(P_1)- \lambda(R_1)) = O(\sigma^8)$. Employing this expression in Markov's Inequality leads us to 
\[
\P(\lambda(P_1) - \lambda(R_1) > \epsilon)\leq O\left(\frac{\sigma^8}{\epsilon}\right).
\]
We now prove the second part of this lemma. Consider a point $(u,v) \in P_4$. Then,

\begin{eqnarray*}
\lefteqn{\P\left(d^2_{\mathcal{A}}(dY(t_1,t_2), dY(u,v))- \frac{2 n_s n_{\ell} \sigma^2}{\tau_s \tau_{\ell}} \leq \tau_{\sigma}\right) } \nonumber \\
&=&  \P \left( \min_{\theta' \in \mathcal{O}} d^2_{\theta', \delta_s, \delta_{\ell}}(dY(t_1,t_2), dY(u,v))- \frac{2 n_s n_{\ell} \sigma^2}{\tau_s \tau_{\ell}} \leq \tau_{\sigma}\right) \nonumber \\
&\leq& \sum_{i=1}^q  \P \left( d^2_{\theta_i, \delta_s, \delta_{\ell}}(dY(t_1,t_2), dY(u,v))- \frac{2 n_s n_{\ell} \sigma^2}{\tau_s \tau_{\ell}} \leq \tau_{\sigma}\right)  \\
&{\leq}& O(\sigma^{22/3}).
\end{eqnarray*}
Therefore, we conclude that $\E(\lambda(P_4)- \lambda(R_2)) = O(\sigma^{22/3})$. Combining this with Markov's Inequality establishes the second part of  \ref{lem:lebR_1}. 
\end{proof}

 Define the event $\mathcal{F}$ as
\[
\mathcal{F} \triangleq \{ \lambda(P_1) - \lambda(R_1) < \sigma^2 \} \cap \{\lambda(P_4) - \lambda(R_2)< \sigma^2 \}.
\]
The risk of the OANLM estimator at $(t_1,t_2) \in P_1$ is then given by
\[
\E\left( \frac{\int_S w^{AN}X du dv}{\int_S wdudv}\right)^2.
\] 
Define $U = \left( \frac{\int_S w^{AN}X du dv}{\int_S wdudv}\right)^2$. We have
\begin{eqnarray}\label{eq:pcase11}
\E(U) = \E( U \ | \ \mathcal{F}) \P(\mathcal{F}) + \E(U \ | \ \mathcal{F}^c) \P(\mathcal{F}^c) \leq \E(U \ | \ \mathcal{F}) \P(\mathcal{F}) + \P(\mathcal{F}^c).
\end{eqnarray}
Lemma \ref{lem:lebR_1} confirms that $\P(\mathcal{F}^c) = O(\sigma^{16/3})$. Therefore, we should obtain an upper bound for the term $ \E(U \ | \ \mathcal{F}) \P(\mathcal{F})$.  This step of the proof is the same as the proof of Case I in Theorem \ref{thm:oracleANLM}, and therefore we do not repeat it here. 

The more challenging step of the proof is to obtain an upper bound for the risk of DANLM for pixels $(t_1,t_2) \in P_2$. 

\noindent Case II: $(t_1,t_2) \in {P}_2$. In this case we again start with defining the following two sets:
\begin{eqnarray*}
L_1 &\triangleq& \{(u,v) \in P_1 \ | \ w^D_{u,v} = 1 \}, \\
L_2 &\triangleq& \{(u,v) \in P_4 \ | \ w^D_{u,v} = 0 \}.
\end{eqnarray*}

As before, our objective is to show that most of the pixels in $P_1$ contribute in the final estimate of $DANLM$ and most of the pixels in $P_4$ do not contribute. The following lemma formalizes this intuition. 


\begin{lem} \label{lem:lebL_1}
Let $\delta_s, \delta_{\ell}, n_s, n_{\ell}, \tau_{\sigma}$ be as defined in Lemma \ref{lem:lebesgue}, and  let $q=\pi \sigma^{-2/3}$. Then,
\begin{eqnarray*}
\P(\lambda(P_1) - \lambda(L_1) > \epsilon) &\leq& \frac{\pi \sigma^{8}}{\epsilon}.
\end{eqnarray*}
\end{lem}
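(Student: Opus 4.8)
The plan is to mimic the proof of Lemma~\ref{lem:lebesgue} closely, since the only new twist is that the pixel $(t_1,t_2)$ now lies in $P_2$ (so its \emph{own} correctly-aligned neighborhood may clip the edge), while the comparison pixels $(u,v)$ still lie in $P_1$ (far above the edge) or $P_4$ (far below it). First I would fix $(t_1,t_2)\in P_2$ and a candidate pixel $(u,v)\in P_1$, set $\theta^\star \triangleq \arctan(h'(t_1))$, and let $\hat\theta\in\mathcal{O}$ be the angle in the discrete grid closest to $\theta^\star$, so that $|\hat\theta-\theta^\star|\le\sigma^{2/3}$. The key geometric observation is that, because $(u,v)\in P_1$ sits at height at least $h(\cdot)+2\delta_\ell+(C/2)\delta_s$ above the contour and $\delta_\ell = 2\sigma^{2/3}|\log\sigma|^{2/3}$, the neighborhood $I_{\hat\theta,\delta_s,\delta_\ell}(u,v)$ still lies entirely above the edge even with the angular error of order $\sigma^{2/3}$ (the lateral displacement of the neighborhood tip is $O(\delta_\ell\cdot\sigma^{2/3})=o(\delta_\ell)$, absorbed into the $P_1$ margin). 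Hence along the direction $\hat\theta$ both pixelated neighborhoods $\mathbf{y}^{\hat\theta,\delta_\ell,\delta_s}_{t_1,t_2}$ and $\mathbf{y}^{\hat\theta,\delta_\ell,\delta_s}_{u,v}$ have \emph{the same constant mean} (namely $1$, the white value), so the difference is pure noise.

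Next I would bound the probability that $(u,v)$ fails to be counted: since $d^2_{\mathcal{A}}=\min_{\theta'\in\mathcal{O}}d^2_{\theta',\delta_s,\delta_\ell}\le d^2_{\hat\theta,\delta_s,\delta_\ell}$, we get
\[
\P\!\left(d^2_{\mathcal{A}}(dY(t_1,t_2),dY(u,v))-\tfrac{2n_sn_\ell\sigma^2}{\delta_s\delta_\ell}>\tau_\sigma\right)\le \P\!\left(d^2_{\hat\theta,\delta_s,\delta_\ell}(dY(t_1,t_2),dY(u,v))-\tfrac{2n_sn_\ell\sigma^2}{\delta_s\delta_\ell}>\tau_\sigma\right).
\]
The right-hand quantity is, after rescaling, a centered $\chi^2_{n_sn_\ell-1}$ deviation, so Lemma~\ref{thm:chisq_conc} with $r=n_sn_\ell-1\asymp|\log\sigma|^2$ and $t\asymp\tau_\sigma=2/\sqrt{|\log\sigma|}$ gives an upper bound of order $\exp(-\tfrac{r}{4}\tau_\sigma^2)=\exp(-\Theta(|\log\sigma|))=O(\sigma^8)$, exactly as in Lemma~\ref{lem:lebesgue}. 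Therefore $\P((u,v)\in L_1)\ge 1-O(\sigma^8)$ for every $(u,v)\in P_1$, and integrating over $P_1$ with Fubini gives $\E(\lambda(P_1)-\lambda(L_1))=O(\sigma^8)$. A single application of Markov's inequality then yields
\[
\P(\lambda(P_1)-\lambda(L_1)>\epsilon)\le \frac{\E(\lambda(P_1)-\lambda(L_1))}{\epsilon}=O\!\left(\frac{\sigma^8}{\epsilon}\right)\le \frac{\pi\sigma^8}{\epsilon},
\]
the last step just pinning down the constant (the factor $\pi$ comes from $\lambda(P_1)\le\lambda(S)=1$ and the crude constants in the $\chi^2$ tail; any harmless slack is absorbed into $\pi$).

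The main obstacle — and the only place where genuine care beyond Lemma~\ref{lem:lebesgue} is needed — is the geometric step verifying that a $P_1$-pixel's $\hat\theta$-neighborhood stays clear of the edge \emph{despite both} the discretization error $|\hat\theta-\theta^\star|\le\sigma^{2/3}$ \emph{and} the fact that $(t_1,t_2)$ itself is only in $P_2$, not $P_1$. I would handle this with the same second-order Taylor expansion of $h$ used in the proof that $P_1$-neighborhoods lie above the contour: for $t_1'\in[t_1-\delta_\ell,t_1+\delta_\ell]$ one has $h(t_1')-h(t_1)\le \delta_\ell + \tfrac12 C\delta_s$, and the extra vertical drift induced by using $\hat\theta$ instead of the locally optimal slope is at most $|\tan\hat\theta-\tan\theta^\star|\,\delta_\ell = O(\sigma^{2/3}\delta_\ell)=O(\sigma^{2/3}\cdot\sigma^{2/3}|\log\sigma|^{2/3})=o(\delta_s\cdot|\log\sigma|)$, which is comfortably dominated by the margins built into the definition of $P_1$. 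Once this containment is established the rest is a verbatim repeat of the $\chi^2$-concentration-plus-Markov argument, so I would not belabor it. Note that, in contrast to Lemma~\ref{lem:lebR_1}, here we only need the $P_1$ half of the statement; the $P_4$ half (that most pixels below the edge are correctly \emph{excluded}) would be proved analogously via a union bound over the $q=\pi\sigma^{-2/3}$ angles, but it is not part of this lemma.
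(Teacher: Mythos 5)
Your proposal follows the paper's route exactly: pick the grid angle $\hat\theta$ closest to $\arctan(h'(t_1))$, argue that along $\hat\theta$ both noise-free neighborhoods are constant so the anisotropic distance is a pure $\chi^2$ statistic, invoke Lemma~\ref{thm:chisq_conc} with $r\asymp|\log\sigma|^2$ and $t=\tau_\sigma$ to get an $O(\sigma^8)$ exclusion probability per point, integrate over $P_1$, and finish with Markov; and your closing Taylor-plus-drift estimate $|\tan\hat\theta-\tan\theta^\star|\,\delta_\ell=O(\sigma^{2/3}\delta_\ell)=o(\delta_s)$ is precisely the computation the paper performs. One correction to the exposition, though: you attach the delicate geometric step to the \emph{comparison} pixel $(u,v)\in P_1$ (``the neighborhood $I_{\hat\theta,\delta_s,\delta_\ell}(u,v)$ still lies entirely above the edge,'' ``a $P_1$-pixel's $\hat\theta$-neighborhood,'' ``the margins built into the definition of $P_1$''). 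That half needs no angular bookkeeping at all: the first lemma of the Preamble of Section~\ref{sec:proofs} already guarantees that \emph{every} neighborhood of a $P_1$ point, at \emph{any} orientation, clears the edge. The only place the discretization error $|\hat\theta-\theta^\star|\le\sigma^{2/3}$ matters is the \emph{query} pixel $(t_1,t_2)\in P_2$, whose own $\hat\theta$-neighborhood must be shown to clear the edge using the margin $(1+C/2)\delta_s$ in the definition of $P_2$ --- and as written, the ``hence both pixelated neighborhoods have the same constant mean'' in your first paragraph does not follow from what precedes it. Your final Taylor expansion around $t_1$ is in fact this missing verification once the labels $P_1$ and $P_2$ are swapped; with that relabeling (and noting that the common constant value above the contour is $0$, not $1$, since the image is the indicator of $\{t_2<h(t_1)\}$), the argument coincides with the paper's.
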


\begin{proof}
We first prove that for some $\theta \in \mathcal{O}$, $I_{\theta, \delta_s, \delta_{\ell}}({t_1,t_2})$ does not intersect the edge.  Suppose that $\theta^*$ is such that $\tan(\theta^*) =h^{\prime}(t_1)$, and consider $\hat{\theta} = \arg \min_{\theta \in \mathcal{O}} |\theta - \theta^*|$.
To ensure that the neighborhood  $I_{\hat{\theta}, \delta_s, \delta_{\ell}}({t_1,t_2})$ does not intersect with the edge, we need to have 
\begin{eqnarray}\label{eq:regAP2_1}
t_2+\tan(\hat{\theta})\left(t'-t_1\right)-\delta_s ~>~ h\!\left(t_1\right)&+& h'\!\left(t_1\right)\left(t'-t_1\right) \nonumber \\
&+& \frac{1}{2}h^{\prime \prime}(t^{\prime \prime})\left(t'-\frac{i}{n}\right)^2
 \end{eqnarray}
 for all $ t^{\prime} \in [t_1,t_1+ \delta_{\ell})$. Also, $t^{\prime \prime} \in [t_1, t^{\prime}]$. According to the mean value theorem we have $\tan(\hat{\theta}) = \tan(\theta^*) + \frac{1}{1+ \tan^2(\theta')} \Delta \theta $, where $\Delta \theta = \hat{\theta}- \theta^*$, and $\theta' \in [- \pi/4, \pi/4]$, since in our Horizon model we have assumed $|h'(t_1)| \leq 1$.
Furthermore, $|h''(t'')| \leq C_2$ and $\Delta \theta \leq \sigma^{2/3} \leq \frac{\pi}{2} \delta_{\ell}$. Therefore, \eqref{eq:regAP2_1} will be satisfied if
\[
t_2> h\left(t_1\right)+ \frac{\pi \delta_{\ell}}{2} \delta_{\ell} + \delta_s + C_2 \delta_{\ell}^2.
\] 
This constraint holds for all pixels in the region $P_2$. Therefore, the neighborhood $I_{\hat{\theta}, \delta_s, \delta_{\ell}}({t_1,t_2})$ lies completely in the region above the $h$. Therefore,
\begin{eqnarray}\label{eq:p1nooverlap}
\lefteqn{\P\left(d^2_{\mathcal{A}}(dY(t_1,t_2), dY(u,v))- \frac{2 n_s n_{\ell} \sigma^2}{\tau_s \tau_{\ell}} \leq \tau_{\sigma}\right) } \nonumber \\
&=&  \P \left( \min_{\theta' \in \mathcal{O}} d^2_{\theta', \delta_s, \delta_{\ell}}(dY(t_1,t_2), dY(u,v))- \frac{2 n_s n_{\ell} \sigma^2}{\tau_s \tau_{\ell}} \leq \tau_{\sigma}\right) \nonumber \\
&\geq&  \P \left( d^2_{\hat{\theta}, \delta_s, \delta_{\ell}}(dY(t_1,t_2), dY(u,v))- \frac{2 n_s n_{\ell} \sigma^2}{\tau_s \tau_{\ell}} \leq \tau_{\sigma}\right)  \nonumber \\
&\overset{(e)}{\geq}& 1- {\rm e}^{-\frac{n_s n_\ell t^2}{4}} = 1- O(\sigma^8).
\end{eqnarray}
where the Inequality (e) is a consequence of \ref{thm:chisq_conc}. By repeating the same argument as the argument in the proof of Lemma \ref{lem:lebesgue} we conclude that
$\E(\lambda(P_1)- \lambda(L_1)) = O(\sigma^8)$. Employing this expression in Markov's Inequality leads us to 
\[
\P(\lambda(P_1) - \lambda(L_1) > \epsilon)\leq O\left(\frac{\sigma^8}{\epsilon}\right).
\]
\end{proof}

\begin{lem}\label{lem:AP4}
Let $\delta_s, \delta_{\ell}, n_s, n_{\ell}, \tau_{\sigma}$ be as defined in Lemma \ref{lem:lebesgue}, and let $q=\pi \sigma^{-2/3}$. Then,
\begin{eqnarray*}
\P(\lambda(P_4) - \lambda(L_2) > \epsilon) &\leq& O \left(\frac{ \sigma^{10/3}}{\epsilon}\right).
\end{eqnarray*}
\end{lem}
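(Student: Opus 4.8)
The plan is to prove Lemma~\ref{lem:AP4} by repeating the argument used for the second part of Lemma~\ref{lem:lebR_1}, since $L_2$ is to a pixel $(t_1,t_2)\in P_2$ exactly what $R_2$ was to a pixel $(t_1,t_2)\in P_1$: the set of pixels of $P_4$ that are \emph{not} averaged into the DANLM estimate. Accordingly it suffices to show that for a fixed $(t_1,t_2)\in P_2$ and a fixed $(u,v)\in P_4$ one has $\P(w^D_{u,v}=1)=O(\sigma^{10/3})$ with an implied constant uniform in $(t_1,t_2)$ and $(u,v)$; then, since $\lambda(P_4)=O(1)$, Fubini gives $\E(\lambda(P_4)-\lambda(L_2))=\int_{P_4}\P(w^D_{u,v}=1)\,dudv=O(\sigma^{10/3})$, and Markov's inequality yields $\P(\lambda(P_4)-\lambda(L_2)>\epsilon)\le \E(\lambda(P_4)-\lambda(L_2))/\epsilon=O(\sigma^{10/3}/\epsilon)$.

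The key new ingredient is a lower bound on the \emph{noiseless} directional distance that is uniform over the angle grid $\mathcal{O}$. First, by the same reasoning as in the earlier lemma stating that every neighborhood of a pixel in $P_1$ lies entirely above the edge (applied symmetrically to $P_4$), every anisotropic neighborhood $I_{\theta,\delta_s,\delta_\ell}(u,v)$ of $(u,v)\in P_4$ lies entirely below the edge, so $\mathbf{y}^{\theta,\delta_\ell,\delta_s}_{u,v}$ equals the zero (all-black) vector plus noise, for every $\theta\in\mathcal{O}$. Second, since $(t_1,t_2)\in P_2$ its center lies above the edge at vertical distance at least $(1+C/2)\delta_s$, and because $h\in{\sl H\ddot{o}lder}^{2}(C)$ the edge deviates from its tangent by at most $\tfrac C2\delta_\ell^2=\Theta(\delta_s)$ over any window of length $\delta_\ell$; hence for small $\sigma$ the center of $I_{\theta,\delta_s,\delta_\ell}(t_1,t_2)$ sits at a positive perpendicular distance from the edge, and since any line through the center of a rectangle splits it into two equal-area halves, at least $\tfrac12-o(1)$ of the \emph{area} of $I_{\theta,\delta_s,\delta_\ell}(t_1,t_2)$ lies on the white side for \emph{every} $\theta\in\mathcal{O}$. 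Because the edge crosses only $o(n_sn_\ell)$ of the $n_s\times n_\ell$ subregions, a corresponding fraction of subregions are fully white, so there is an absolute constant $c_0>0$ with
\[
d^2_{\theta,\delta_s,\delta_\ell}\big(f(t_1,t_2),f(u,v)\big)\ \ge\ c_0 \qquad\text{for all }\theta\in\mathcal{O},
\]
uniformly in $(t_1,t_2)\in P_2$ and $(u,v)\in P_4$.

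Given this separation the remainder is concentration. Using $\E\,d^2_{\theta,\delta_s,\delta_\ell}(dY(t_1,t_2),dY(u,v))=d^2_{\theta,\delta_s,\delta_\ell}(f(t_1,t_2),f(u,v))+\tfrac{2n_sn_\ell\sigma^2}{\delta_s\delta_\ell}$, the event that the weight along direction $\theta$ equals one forces $d^2_{\theta,\delta_s,\delta_\ell}(dY(t_1,t_2),dY(u,v))$ to fall below its mean by at least $c_0-\tau_\sigma\ge c_0/2$ for small $\sigma$. Since $(n_sn_\ell-1)\,d^2_{\theta,\delta_s,\delta_\ell}(dY(t_1,t_2),dY(u,v))\big/\tfrac{2n_sn_\ell\sigma^2}{\delta_s\delta_\ell}$ is a non-central $\chi^2$ with $r=n_sn_\ell-1=\Theta(|\log\sigma|^2)$ degrees of freedom, the Chernoff bound of Lemma~\ref{thm:chisq_conc} applies --- the non-centrality term only decreases the moment generating function, so the same optimization over the exponential parameter goes through --- and gives
\[
\P\!\left(d^2_{\theta,\delta_s,\delta_\ell}(dY(t_1,t_2),dY(u,v))\le \tfrac{2n_sn_\ell\sigma^2}{\delta_s\delta_\ell}+\tau_\sigma\right)\ \le\ {\rm e}^{-\Theta(n_sn_\ell)}\ =\ {\rm e}^{-\Theta(|\log\sigma|^2)}.
\]
A union bound over the $q=\pi\sigma^{-2/3}$ angles of $\mathcal{O}$ then gives $\P(w^D_{u,v}=1)\le q\,{\rm e}^{-\Theta(|\log\sigma|^2)}$, which is $o(\sigma^{K})$ for every fixed $K$ and in particular $O(\sigma^{10/3})$; combined with the reduction of the first paragraph this finishes the proof.

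I expect the one genuinely new step to be the uniform-in-$\theta$ separation bound: one must verify that the mismatch between a grid angle $\theta\in\mathcal{O}$ and the edge tangent, together with the bounded curvature of $h$, never tilts the $\delta_\ell\times\delta_s$ rectangle centered at a $P_2$-pixel far enough across the edge to make its pixelated neighborhood mostly black, and one must check that the number of subregions the (nearly linear) edge meets is $O(n_s+n_\ell)=o(n_sn_\ell)$, so that a constant fraction of subregions are genuinely white and contribute $1$ to $\|\mathbf{y}^{\theta,\delta_\ell,\delta_s}_{t_1,t_2}\|_2^2$. Once that is in hand, the non-central $\chi^2$ tail estimate and the union bound over $\mathcal{O}$ are routine and mirror the corresponding steps in Lemmas~\ref{lem:lebesgue}, \ref{lem:lebR_1}, and \ref{lem:lebL_1}.
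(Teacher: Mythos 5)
Your overall route is exactly the paper's: reduce to a per-point bound on $\P(w^D_{u,v}=1)$ via $\E(\lambda(P_4)-\lambda(L_2))$ and Markov, establish a constant lower bound on the noiseless directional distance that holds uniformly over all $\theta\in\mathcal{O}$, apply the $\chi^2$ concentration of Lemma~\ref{thm:chisq_conc} to the (non-central) distance, and union-bound over the $q=\pi\sigma^{-2/3}$ angles. Your concentration step even gives ${\rm e}^{-\Theta(|\log\sigma|^2)}$ per angle, which is stronger than the per-angle $O(\sigma^4)$ the paper settles for; both comfortably yield $O(\sigma^{10/3}/\epsilon)$ after the union bound.

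There is, however, a concrete slip in your separation step that, as written, destroys the lower bound $d^2_{\theta,\delta_s,\delta_\ell}(f(t_1,t_2),f(u,v))\geq c_0$. A point $(t_1,t_2)\in P_2$ satisfies $t_2\geq h(t_1)+(1+C/2)\delta_s$, so it lies on the side where $f=\mathbf{1}_{\{t_2<h(t_1)\}}=0$ (the black side), and the half of the rectangle cut off by the tangent line through the center that contains the center is the \emph{black} half. You assert instead that at least $\tfrac12-o(1)$ of $I_{\theta,\delta_s,\delta_\ell}(t_1,t_2)$ lies on the \emph{white} side and that these subregions ``contribute $1$ to $\|\mathbf{y}^{\theta,\delta_\ell,\delta_s}_{t_1,t_2}\|_2^2$.'' But the neighborhood of $(u,v)\in P_4$ is entirely white (all entries equal to $1$), so white subregions of the $P_2$ neighborhood \emph{agree} with the corresponding entries of the $P_4$ neighborhood and contribute $0$ to $\|\mathbf{y}^{\theta,\delta_\ell,\delta_s}_{t_1,t_2}-\mathbf{y}^{\theta,\delta_\ell,\delta_s}_{u,v}\|_2^2$; your stated premise would therefore only bound part of the distance from above, not below. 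The correct statement (and the one the paper uses, modulo its own loose wording: it ultimately invokes ``$\Theta(2\delta_\ell\delta_s)$ of the pixels in each neighborhood are equal to $0$'') is that a constant fraction of the subregions of the $P_2$ neighborhood take the value $0$ for every $\theta\in\mathcal{O}$, and each of these differs by $1$ from the all-ones $P_4$ neighborhood, giving $d^2\geq c_0$. With the colors swapped in that one step, your argument is correct and coincides with the paper's.
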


\begin{figure}
\begin{center}
  \includegraphics[width=5.6cm]{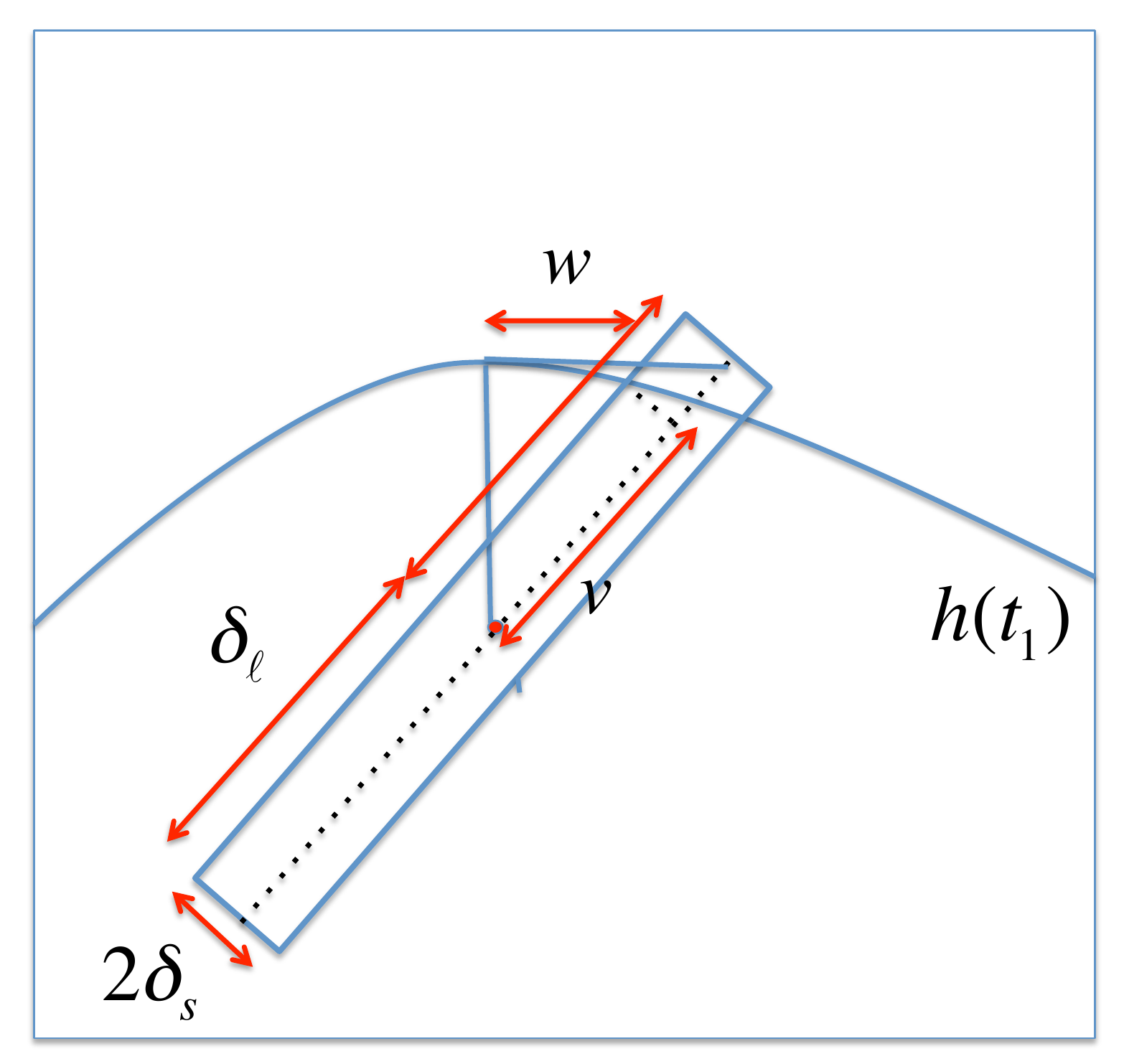}\\
  \caption{Explanation of the neighborhood rotation in Lemma \ref{lem:AP4}.}
   \label{fig:directialpatch_p3}
  \end{center}
\end{figure}

\begin{proof}
We first prove that at least half of the area of $I_{\theta, \delta_s, \delta_{\ell}}(u,v)$ is located below the edge. 
For notational simplicity we assume that $h^{\prime}(t_{1}) = 0$. Consider Figure \ref{fig:directialpatch_p3},  in which the neighborhood has an arbitrary orientation. Let $w,v$ be as defined in this figure.  
We then have that
\begin{equation}\label{eq:yestimate}
v = \frac{d}{\cos \theta}-\frac{Cw^2}{\cos \theta} - \delta_s \tan \theta = \frac{d- Cw^2 - \delta_s \sin\theta}{\cos \theta} \geq \frac{d- Cw^2 - \delta_s}{\cos \theta},
\end{equation}
where $d = t_2- h(t_1)$.
Suppose $d$ is such that, at the correct angle $d- Cw^2 - \delta_s >0$, i.e., $d > (C+1) \sigma^{4/3}$. According to \eqref{eq:yestimate}, $v>0$, and hence
more than half of the area of the neighborhood is in the white region (below the edge contour). Therefore, the number of pixels in this region is $4\log^2\sigma  \pm o(\log^2 \sigma)$. Consider a point $(u,v) \in P_4$. Then,
\begin{eqnarray}\label{eq:p4measure}
\lefteqn{\P\left(d^2_{\mathcal{A}}(dY(t_1,t_2), dY(u,v))- \frac{2 n_s n_{\ell} \sigma^2}{\tau_s \tau_{\ell}} \leq \tau_{\sigma}\right) } \nonumber \\
&=&  \P \left( \min_{\theta' \in \mathcal{O}} d^2_{\theta', \delta_s, \delta_{\ell}}(dY(t_1,t_2), dY(u,v))- \frac{2 n_s n_{\ell} \sigma^2}{\tau_s \tau_{\ell}} \leq \tau_{\sigma}\right) \nonumber \\
&\leq& \sum_{i=1}^q  \P \left( d^2_{\theta_i, \delta_s, \delta_{\ell}}(dY(t_1,t_2), dY(u,v))- \frac{2 n_s n_{\ell} \sigma^2}{\tau_s \tau_{\ell}} \leq \tau_{\sigma}\right) \nonumber \\
&\overset{(g)}{=}&  O(\sigma^{10/3}).
\end{eqnarray}
To derive Inequality (g) we have used the fact that $\Theta(2\delta_{\ell} \delta_s)$ of the the pixels in each neighborhood are equal to $0$. To consider the worst case we assumed the rest of the values are equal to $1$. Using the same technique as in the proof of Lemma \ref{lem:lebR_1} in combination with \eqref{eq:p4measure} we obtain
\begin{eqnarray*}
\P(\lambda(P_4) - \lambda(L_2) > \epsilon) &\leq& O \left(\frac{ \sigma^{10/3}}{\epsilon}\right).
\end{eqnarray*}
\end{proof}
\noindent Based on Lemmas \ref{lem:lebL_1} and  \ref{lem:AP4} we can calculate the risk of DANLM for $(t_1, t_2) \in P_2$.  Define the event $\mathcal{G}$ as
\[
\mathcal{G} \triangleq \{ \lambda(P_1) - \lambda(L_1) < \sigma^2 \} \cap \{\lambda(P_4) - \lambda(L_2)< \sigma^2 \}.
\]
The risk of the OANLM estimator at $(t_1,t_2) \in P_2$ is then given by
\[
\E\left( \frac{\int_S w^{AN}X du dv}{\int_S wdudv}\right)^2.
\] 
Define $U = \left( \frac{\int_S w^{AN}X du dv}{\int_S wdudv}\right)^2$. We have
\begin{eqnarray}\label{eq:pcase11}
\E(U) = \E( U \ | \ \mathcal{G}) \P(\mathcal{G}) + \E(U \ | \ \mathcal{G}^c) \P(\mathcal{G}^c) \leq \E(U \ | \ \mathcal{G}) \P(\mathcal{G}) + \P(\mathcal{G}^c).
\end{eqnarray}
Lemmas \ref{lem:lebL_1} and  \ref{lem:AP4} imply that $\P(\mathcal{G}^c = O(\sigma^{4/3})$. Furthermore, following along the same lines of case I, we can prove $\E( U \ | \ \mathcal{G}) \P(\mathcal{G}) = O(\sigma^{4/3} |\log(\sigma)|^{4/3}|)$.

\noindent Case III: $(t_{1},t_{2}) \in {S}_2$. The proof of this case is identical to that of Case II in the proof of Theorem \ref{thm:oracleANLM}. 

\noindent Case IV: $(t_{1},t_{2}) \in {P}_3 \cup {P}_4$. The proof of this case is similar to that for the regions ${P}_1$ and ${P}_2$ in Cases I and II and therefore is skipped here.

Finally, combining the upper bounds obtained in Cases I, II, III, and IV completes the proof of the theorem.

\section{Acknowledgements}
This
work was supported by the grants NSF CCF-0431150, CCF-0926127,
and CCF-1117939; DARPA/ONR N66001-11-C-4092 and N66001-11-1-4090; 
ONR N00014-08-1-1112, N00014-10-1-0989, and N00014-11-1-0714;
AFOSR FA9550-09-1-0432; ARO MURI W911NF-07-1-0185 and W911NF-09-1-0383; 
and the TI Leadership University Program.

\appendix

\section{Minimax risk of the mean filter}\label{app:meanfilter}

In this appendix, we obtain the decay rate of the risk of the mean filter for the Horizon class of images. Our proof is similar to the proof
in \cite{CaDo09}. Nevertheless, we repeat the proof here for the sake of completeness and since our continuous framework is slightly different from the discrete framework in \cite{CaDo09}. See \cite{MaNaBa11} for the generalization of this result. 
 
The classical mean filter estimates the image via
\begin{eqnarray*}
\hat{f}^{\rm{MF}}(t_1,t_2) = \frac{1}{\Delta^2} \int_{\tau_1 = - \frac{\Delta}{2}}^{\frac{\Delta}{2}} \int_{\tau_2 = - \frac{\Delta}{2}}^{\frac{\Delta}{2}} dY(t_1- \tau_1, t_2- \tau_2),
\end{eqnarray*}
where $\Delta$ specifies the size of the window on which averaging takes place.
\begin{thm}[\rm \cite{CaDo09}]
If $\hat{f}^{\rm{MF}}$ is the estimate of the mean filter, then
\[
\inf_{\Delta} \sup_{f \in \mathcal{H}^{\alpha}(C)} R(f, \hat{f}^{MF})= \Theta(\sigma^{2/3}).
\]
\end{thm}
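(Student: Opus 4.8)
The plan is to bound the risk via the bias--variance decomposition $R(f,\hat{f}^{\rm MF}) = \|f-\E\hat{f}^{\rm MF}\|_2^2 + \E\|\hat{f}^{\rm MF}-\E\hat{f}^{\rm MF}\|_2^2$ and then balance the two terms by choosing the window width $\Delta$. The variance is the easy term: since $dY = f\,dt_1dt_2 + \sigma\,dW$ and the filter is linear, the fluctuation $\hat{f}^{\rm MF}(t_1,t_2) - \E\hat{f}^{\rm MF}(t_1,t_2)$ equals $\tfrac{\sigma}{\Delta^2}\int_{R_\Delta(t_1,t_2)}dW$, where $R_\Delta(t_1,t_2)$ is the $\Delta\times\Delta$ averaging window intersected with $[0,1]^2$. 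Because increments of the Wiener sheet over disjoint regions are independent with variance equal to the area, this fluctuation is centred Gaussian with variance $\sigma^2/|R_\Delta(t_1,t_2)| = \Theta(\sigma^2/\Delta^2)$ \emph{regardless of $f$} (the clipped window always retains area at least $\Delta^2/4$ at every point of $[0,1]^2$). Integrating over $[0,1]^2$ gives $\E\|\hat{f}^{\rm MF}-\E\hat{f}^{\rm MF}\|_2^2 = \Theta(\sigma^2/\Delta^2)$.

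For the upper bound I would estimate the bias geometrically. For $f = f_h \in \mathcal{H}^{\alpha}(C)$, $\E\hat{f}^{\rm MF}(t_1,t_2)$ is just the local average of $f_h$ over $R_\Delta(t_1,t_2)$. Since $h \in {\sl H\ddot{o}lder}^{1}(1)$, on any horizontal interval of length $\Delta$ the contour varies by at most $\Delta$; hence whenever $|t_2 - h(t_1)| > \tfrac{3}{2}\Delta$ the whole window lies on one side of the edge, $f_h$ is constant there, and the bias vanishes. The remaining set $\{(t_1,t_2):|t_2-h(t_1)|\le\tfrac{3}{2}\Delta\}$ has Lebesgue measure at most $3\Delta$ by Fubini, and the bias is at most $1$ on it because $f_h \in [0,1]$, so $\|f_h - \E\hat{f}^{\rm MF}\|_2^2 = O(\Delta)$ uniformly over the class. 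Combining, $\sup_{f\in\mathcal{H}^{\alpha}(C)} R(f,\hat{f}^{\rm MF}) = O(\sigma^2/\Delta^2 + \Delta)$, which is minimized at $\Delta \asymp \sigma^{2/3}$ and gives $\inf_\Delta \sup_f R(f,\hat{f}^{\rm MF}) = O(\sigma^{2/3})$.

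For the matching lower bound, fix any $\Delta$. If $\Delta \le \sigma^{2/3}$ the variance alone gives $R(f,\hat{f}^{\rm MF}) \ge \sigma^2/\Delta^2 \ge \sigma^{2/3}$ for every $f$, so assume $\Delta > \sigma^{2/3}$. I would test against the flat-edge image $f_{h_0}$ with $h_0\equiv\tfrac{1}{2}$; a constant function lies in every H\"older class, so $f_{h_0}\in\mathcal{H}^{\alpha}(C)$. A direct computation, $\E\hat{f}^{\rm MF}(t_1,t_2) = \tfrac{1}{\Delta}\int_{-\Delta/2}^{\Delta/2}\mathbf{1}_{\{t_2-\tau_2 < 1/2\}}\,d\tau_2$ at interior points, shows that for $t_2\in(\tfrac{1}{2}-\tfrac{\Delta}{4},\tfrac{1}{2})$ the bias is at least $\tfrac{1}{4}$, so (handling separately the trivial case of $\Delta$ larger than a fixed constant, where the bias is $\Omega(1)$ on a set of measure $\Omega(1)$) the integrated squared bias is $\gtrsim \min(\Delta,1)\gtrsim\sigma^{2/3}$. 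Hence $\inf_\Delta\sup_f R(f,\hat{f}^{\rm MF}) = \Omega(\sigma^{2/3})$, and together with the upper bound this proves the theorem.

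All estimates above are elementary; the only steps needing any care are the interaction with the image boundary $\partial[0,1]^2$, where the averaging window is clipped (handled by noting the clipped window always retains area $\Theta(\Delta^2)$, so the pointwise variance stays $\Theta(\sigma^2/\Delta^2)$, while clipping only shrinks the ``bad'' strip), and making the tube estimate $\lambda(\{|t_2-h(t_1)|\le\tfrac{3}{2}\Delta\}) = O(\Delta)$ precise for a general H\"older contour --- which is exactly where the Lipschitz-$1$ constraint ${\sl H\ddot{o}lder}^{1}(1)$ built into the definition of $\mathcal{H}^{\alpha}(C)$ enters.
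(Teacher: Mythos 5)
Your proposal is correct and follows essentially the same route as the paper's own proof: a bias--variance decomposition with the Lipschitz tube estimate $\lambda(\{|t_2-h(t_1)|\lesssim\Delta\})=O(\Delta)$ for the upper bound, and the flat-edge image $h_0\equiv\tfrac12$ with bias at least $\tfrac14$ on a strip of width $\Theta(\Delta)$ for the lower bound. The only cosmetic difference is that you split the lower bound into cases $\Delta\le\sigma^{2/3}$ and $\Delta>\sigma^{2/3}$, whereas the paper adds the bias and variance lower bounds into a single expression and minimizes over $\Delta$.
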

\begin{proof}
We first derive a lower bound for $R(f, \hat{f}^{\rm{MF}})$. Consider a function $f_{h}(t_1,t_2)$ with $h(t_1) = 1/2$ (recall Figure \ref{fig:compareanisotropic}) and define
\[
Q_{\Delta} = \{(t_1,t_2) \ | \ 1/2- \Delta<t_2< 1/2+ \Delta \}.
\]
It is straightforward to confirm that if $(t_1,t_2) \in Q_{{\Delta}/{4}}$, then $|f_h(t_1,t_2)-\E \hat{f}^{\rm{MF}}(t_1,t_2)|> \frac{1}{4}$. Therefore, if ${\rm Bias}(\hat{f}^{\rm{MF}})$ 
denotes the bias of the mean filter estimator, then we have
\begin{eqnarray}\label{eq:biasmf}
{\rm Bias^2}(\hat{f}^{\rm{MF}}) &=& \int_S |f_h(t_1,t_2) - \E \hat{f}^{\rm{MF}}(t_1,t_2)|^2 dt_1dt_2 \nonumber \\
 & \geq&  \int_{Q_{\Delta/4}} |f_h(t_1,t_2) - \E \hat{f}^{\rm{MF}}(t_1,t_2)|^2 dt_1dt_2 \geq \frac{1}{32} \Delta. 
\end{eqnarray}
Now consider a point $(t_1, t_2) \in S \backslash Q_{\Delta} $. We have
\begin{eqnarray}
\lefteqn{\E (\hat{f}^{\rm{MF}}(t_1,t_1) - \E \hat{f}^{\rm{MF}}(t_1,t_2))^2} \nonumber \\
\! \! \! \! \! \!&=&  \! \! \! \! \!  \int_{\tau_1 = - \frac{\Delta}{2}}^{\frac{\Delta}{2}} \int_{\tau_2 = - \frac{\Delta}{2}}^{\frac{\Delta}{2}}  \int_{\tau'_1 = - \frac{\Delta}{2}}^{\frac{\Delta}{2}} \int_{\tau'_2 = - \frac{\Delta}{2}}^{\frac{\Delta}{2}} \! \! \!  \frac{\E (dW(t_1- \tau_1,t_2- \tau_2) dW(t_1- \tau'_1, t_2- \tau'_2))}{\Delta^4} \nonumber \\
 \! \! \! \! \! \! &=&\! \! \! \!  \frac{\sigma^2}{\Delta^2} \nonumber
 \end{eqnarray}
Therefore, if ${\rm Var}(\hat{f}^{\rm{MF}})$ is the variance of the mean filter estimator, then we have
\begin{eqnarray}\label{eq:varMF}
{\rm Var}(\hat{f}^{\rm{MF}}) \geq \int_{S\backslash Q_{4\Delta}}  \E (\hat{f}^{\rm{MF}}(t_1,t_1) - \E \hat{f}^{\rm{MF}}(t_1,t_2))^2  = \frac{\sigma^2}{\Delta^2}(1 - 4\Delta).
\end{eqnarray}
By combining \eqref{eq:biasmf} and \eqref{eq:varMF} we obtain the lower bound of $\frac{\sigma^2}{\Delta^2}+ \frac{1}{32} \Delta - 4 \frac{\sigma^2}{\Delta}$ for the risk of the mean filter estimator. If we minimize this lower bound over $\Delta$ then we obtain the lower bound of $\Theta(\sigma^{2/3})$ for the risk. 

Now, we derive an upper bound for the risk. Define the region
\[
R_{\Delta} = \{(t_1,t_2) \ | \  h(t_1) - \Delta \leq t_2 \leq  h(t_1) + \Delta\}. 
\]
It is straightforward to confirm that, if $(t_1, t_2) \in S\backslash R_{\Delta}$, then the $\Delta$-neighborhood of this point does not intersect the edge contour. Therefore, the bias of the estimator over this region is zero and the variance is $\frac{\sigma^2}{\Delta^2}$. If we bound the risk of the points in the $R_{\Delta}$ region by $1$, we obtain the following upper bound for the risk,
\[
R(f, \hat{f}^{\rm{MF}}) \leq \frac{\sigma^2}{\Delta^2} + 2\Delta.
\]
Again by optimizing the upper bound over $\Delta$ we obtain the upper bound of $\Theta(\sigma^{2/3})$. This completes the proof.
\end{proof}

\section{Minimax risk of NLM}\label{app:nlm}

Corollary \ref{cor:nlm} states the following upper bound for the risk of NLM:
\[
\sup_{f \in H^{\alpha}(C)}R(f, \hat{f}^N) = O(\sigma |\log \sigma|).
\]
In this section we prove that the risk of NLM is lower bounded by $\Omega(\sigma)$. The proof
is similar to the proof of Theorem 5 in \cite{MaNaBa11};  we consider the function $f_h(t_1,t_2)$ for $h(t_1) = 1/2$ and prove that the bias of the NLM on $(t_1,t_2) \in Q_{\frac{\delta}{n}}$ is $\Theta(1)$ for any choice of the threshold parameter. However, in the continuous setting considered here, the steps are more challenging. Following \cite{MaNaBa11}
we consider the semi-oracle NLM algorithm (SNLM). The semi-oracle $\delta$-distance is defined as
\begin{eqnarray}
 \lefteqn{\tilde{d}^2_{\delta}(dY(t_1,t_2), dY(s_1,s_2))} \nonumber \\
  \! \!&=& \! \! \! \frac{1}{n^2-1} \left( \| \mathbf{x}_{t_1,t_2}^{\delta}- \mathbf{y}_{s_1,s_2}^{\delta}  \|_2^2 
 - |\mathbf{x}_{t_1,t_2}^{\delta}(0,0) - \mathbf{y}_{s_1,s_2}^{\delta}(0,0)|^2\right), \nonumber
\end{eqnarray}
where 
\[
\mathbf{x}_{t_1,t_2}^{\delta}(j_1,j_2) = \int_{(s_1,s_2) \in I^{j_1,j_2}_{\frac{\delta}{n}} } f(s_1,s_2)ds_1ds_2.
\]
SNLM then estimates the weights according to 
\begin{eqnarray}
w^{S}_{t_1,t_2}(s_1,s_2) =\! \! \left\{\begin{array}{rl}
 1 &  \mbox{ if $\tilde{d}^2_{\delta} (dY(t_1,t_2),dY(s_1,s_2)) \leq \frac{n^2\sigma^2}{\delta^2}+ \tau_{\sigma} $,} \\ 
 0 &   \mbox{ otherwise.}
\end{array}\right. \nonumber
\end{eqnarray}
It is clear that the distance estimates of the SNLM algorithm are more accurate than those of NLM algorithm. Hence it outperforms NLM, and a lower bound
that holds for SNLM will hold for NLM as well. 

\noindent We make the following mild assumptions on the parameters of SNLM.
 \begin{itemize}
 \item [A1:] The window size $\delta \rightarrow 0$ as $\sigma \rightarrow 0$. 
 
 \item [A2:] $\frac{\delta^2}{n^2} = \Omega(\sigma^2)$. Otherwise $\E \tilde{d}_{\delta}^2 (dY(t_1,t_2), dY(s_1,s_2)) \rightarrow \infty$ as $\sigma \rightarrow 0$.
 
 \item [A3:] $n \rightarrow \infty$ as $\sigma \rightarrow 0$. This ensures that, if two points $(t_1, t_2)$ and $(s_1, s_2)$ have the same neighborhoods, then
 $w_{t_1,t_2}(u_1,u_2) = 1$ with high probability.
 
 \item[A4:] If $\tilde{d}(f(t_1,t_2), f(s_1,s_2))> 1/4$, then $\P(w^S_{t_1,t_2}(s_1,s_2)=1) = o(\sigma^3)$. 
 \end{itemize}
 
Again, consider the function $f_h(t_1,t_2)$ for $h(t_1) = 1/2$ (recall Figure \ref{fig:compareanisotropic}). Let $(t_1,t_2) \in Q_{\delta/n}$. For notational simplicity we use $w({s_1,s_2})$ instead of $w^S_{t_1,t_2}(s_1,s_2)$. 
 
\begin{lem}\label{lem:sym1}
If $|s_1-t_1| > \delta/2$ and $|s'_1-t_1|> \delta/2$, then
\[
\P(w^{S}_{t_1,t_2}(s_1,s_2)=1) = \P(w^{S}_{t_1,t_2}(s'_1,s_2)=1)
\]
for any $t_1,t_2, s_1,s_2$ and $s'_1$.
\end{lem}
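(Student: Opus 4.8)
The plan is to read off the identity from a translation symmetry. Since here $h\equiv 1/2$, the Horizon image $f_h(t_1,t_2)=\mathbf{1}_{\{t_2<1/2\}}$ does not depend on its first coordinate, so it is invariant under horizontal shifts $(u,v)\mapsto(u+a,v)$. Moreover, in the SNLM weight $w^S_{t_1,t_2}(s_1,s_2)$ the only source of randomness is the \emph{noisy} neighborhood $\mathbf{y}^\delta_{s_1,s_2}$: the reference vector $\mathbf{x}^\delta_{t_1,t_2}$ is a fixed (deterministic) vector built from $f$, and $\{w^S_{t_1,t_2}(\cdot,s_2)=1\}$ is exactly the sub-level set $\{\tilde d^2_\delta(dY(t_1,t_2),\cdot)\le n^2\sigma^2/\delta^2+\tau_\sigma\}$ of a fixed measurable function of $\big(\mathbf{x}^\delta_{t_1,t_2},\mathbf{y}^\delta_{\cdot,s_2}\big)$. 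Hence it suffices to show $\mathbf{y}^\delta_{s_1,s_2}\overset{d}{=}\mathbf{y}^\delta_{s_1',s_2}$ whenever both $|s_1-t_1|>\delta/2$ and $|s_1'-t_1|>\delta/2$.

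To do that I would split $\mathbf{y}^\delta_{s_1,s_2}=\mathbf{x}^\delta_{s_1,s_2}+\mathbf{n}_{s_1,s_2}$ into signal and noise, where $\mathbf{n}_{s_1,s_2}(j_1,j_2)=\frac{n^2\sigma}{\delta^2}\int_{I^{j_1,j_2}_\delta(s_1,s_2)}dW$. For the signal part, the subregions $I^{j_1,j_2}_\delta(s_1,s_2)$ and $I^{j_1,j_2}_\delta(s_1',s_2)$ are horizontal translates of one another; the hypotheses $|s_1-t_1|>\delta/2$ and $|s_1'-t_1|>\delta/2$ keep these neighborhoods clear of the relevant part of $\partial S$, and since $f_h(u,v)$ is independent of $u$, the integrals $\tfrac{n^2}{\delta^2}\int_{I^{j_1,j_2}_\delta}f$ agree entry by entry, so $\mathbf{x}^\delta_{s_1,s_2}=\mathbf{x}^\delta_{s_1',s_2}$. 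For the noise part, the subregions $\{I^{j_1,j_2}_\delta(s_1,s_2)\}_{j_1,j_2}$ partition $I_\delta(s_1,s_2)$ and are therefore pairwise disjoint, so the Wiener-sheet increments over them are independent centered Gaussians of variance equal to the common area $\delta^2/n^2$; thus $\mathbf{n}_{s_1,s_2}$ is a vector of i.i.d.\ $N\!\big(0,n^2\sigma^2/\delta^2\big)$ entries, a law that does not depend on $(s_1,s_2)$, whence $\mathbf{n}_{s_1,s_2}\overset{d}{=}\mathbf{n}_{s_1',s_2}$.

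Combining the two parts gives $\mathbf{y}^\delta_{s_1,s_2}=\mathbf{x}^\delta_{s_1,s_2}+\mathbf{n}_{s_1,s_2}\overset{d}{=}\mathbf{x}^\delta_{s_1',s_2}+\mathbf{n}_{s_1',s_2}=\mathbf{y}^\delta_{s_1',s_2}$, and pushing this equality of laws through the deterministic map $\mathbf{y}\mapsto \mathbf{1}\{\tilde d^2_\delta(dY(t_1,t_2),\cdot)\le n^2\sigma^2/\delta^2+\tau_\sigma\}$ — which also depends on the fixed vector $\mathbf{x}^\delta_{t_1,t_2}$ and on the fixed removed index $(0,0)$, neither of which moves under the horizontal shift — yields $\P(w^S_{t_1,t_2}(s_1,s_2)=1)=\P(w^S_{t_1,t_2}(s_1',s_2)=1)$. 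There is no serious obstacle; the only place to be careful is the continuous Wiener-sheet bookkeeping, i.e.\ verifying that the vector of $W$-increments over the translated, pairwise-disjoint subsquares really does have a distribution that is manifestly independent of the horizontal location, and noting that removing the central coefficient does not break the symmetry because the index $(0,0)$ always labels the central subsquare regardless of the shift.
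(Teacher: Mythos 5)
Your argument is correct and is exactly the symmetry argument the paper has in mind (the paper declares this lemma straightforward and skips the proof): since the SNLM reference vector $\mathbf{x}^\delta_{t_1,t_2}$ is deterministic, the weight is a fixed measurable function of $\mathbf{y}^\delta_{s_1,s_2}$ alone, and the law of that vector is invariant under horizontal translation because $f_0$ is constant in its first coordinate and the Wiener-sheet increments over the translated, pairwise-disjoint subsquares are i.i.d.\ centered Gaussians of fixed variance. One small quibble: the hypotheses $|s_1-t_1|>\delta/2$ and $|s_1'-t_1|>\delta/2$ do not serve to keep the neighborhoods away from $\partial S$ as you suggest (the paper handles the boundary separately, by excluding points whose neighborhoods are not contained in $[0,1]^2$); for the semi-oracle weights these hypotheses are in fact not needed at all, precisely because the reference neighborhood carries no noise and hence no overlap-induced dependence can arise.
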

\noindent The proof is straightforward and hence skipped here. Note that we we do not consider the boundary points whose neighborhoods are not subset of $S = [0,1]^2$. 

\noindent Consider a point $(t_1,t_2) \in G_{\delta/n}$. 

\begin{lem}\label{lem:sym2}
If $|s-t_1|> \delta/2$, for $u<\delta/4$, then we have
\[
\P(w^{S}_{t_1,t_2}(s,t_2-u)=1) = \P(w^{S}_{t_1,t_2}(s,t_2+u)=1)
\]
\end{lem}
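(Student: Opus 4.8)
The plan is to exploit the reflection symmetry of the whole configuration across the horizontal line through $(t_1,t_2)$. Since $h\equiv 1/2$, this pixel lies (essentially) on the edge, and the two candidate pixels $(s,t_2-u)$ and $(s,t_2+u)$ are mirror images of one another. Because $\tilde d^2_\delta$ is the \emph{semi-oracle} distance, the vector $\mathbf x^\delta_{t_1,t_2}$ is deterministic, so the event $\{w^S_{t_1,t_2}(s,t_2\mp u)=1\}$ depends on $dY$ only through the Gaussian vector $\mathbf y^\delta_{s,t_2\mp u}$. It therefore suffices to show that, after applying the reflection, these two Gaussian vectors enter the test $\tilde d^2_\delta(\mathbf x^\delta_{t_1,t_2},\,\cdot\,)\le \tfrac{n^2\sigma^2}{\delta^2}+\tau_\sigma$ in exactly the same way, i.e.\ that the reflection is, in distribution, an invariance of the whole decision rule.

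In detail, I would let $\rho(v,w)=(v,2t_2-w)$ be reflection about the horizontal line through $(t_1,t_2)$ and let $\Pi$ be the permutation of the $n\times n$ pixel grid induced by reversing the vertical index (so $\Pi$ is an involution fixing the center index $(0,0)$). Two facts are then needed. First, the distributional identity
\[
\mathbf y^\delta_{s,t_2-u}\;\overset{d}{=}\;\Pi\bigl(\mathbf 1-\mathbf y^\delta_{s,t_2+u}\bigr).
\]
For the mean: $\rho$ carries the $\delta$-window (and each subregion) of $(s,t_2+u)$ onto that of $(s,t_2-u)$ with the vertical index reversed, and $f_h\circ\rho=1-f_h$ a.e., so a change of variables turns the pixelated mean of $\mathbf y^\delta_{s,t_2-u}$ into $\mathbf 1$ minus the $\Pi$-reindexed pixelated mean of $\mathbf y^\delta_{s,t_2+u}$. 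For the fluctuation: the covariance of $\bigl(\int_{A_j}dW\bigr)_j$ is $\bigl(\lambda(A_j\cap A_k)\bigr)_{j,k}$, and since $\rho$ preserves Lebesgue measure and intersections, the reflected integrated-white-noise vector is centered Gaussian with the same covariance, hence equal in law (its sign being immaterial by symmetry). Second, the map $\mathbf z\mapsto\Pi(\mathbf 1-\mathbf z)$ leaves $\tilde d^2_\delta(\mathbf x^\delta_{t_1,t_2},\,\cdot\,)$ unchanged: $\Pi$ is orthogonal, so $\|\mathbf a-\Pi\mathbf b\|_2^2=\|\Pi\mathbf a-\mathbf b\|_2^2$, and on the edge $\Pi\mathbf x^\delta_{t_1,t_2}=\mathbf 1-\mathbf x^\delta_{t_1,t_2}$ with $\mathbf x^\delta_{t_1,t_2}(0,0)=1/2$; expanding, $\|\mathbf x^\delta_{t_1,t_2}-\Pi(\mathbf 1-\mathbf z)\|_2^2=\|(\Pi\mathbf x^\delta_{t_1,t_2}-\mathbf 1)+\mathbf z\|_2^2=\|{-}\mathbf x^\delta_{t_1,t_2}+\mathbf z\|_2^2=\|\mathbf x^\delta_{t_1,t_2}-\mathbf z\|_2^2$, and the subtracted center term is handled the same way (the center index is fixed by $\Pi$ and $\mathbf x^\delta_{t_1,t_2}(0,0)=1/2$). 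Combining, $\P(w^S_{t_1,t_2}(s,t_2-u)=1)=\P\bigl(\tilde d^2_\delta(\mathbf x^\delta_{t_1,t_2},\Pi(\mathbf 1-\mathbf y^\delta_{s,t_2+u}))\le\tfrac{n^2\sigma^2}{\delta^2}+\tau_\sigma\bigr)=\P(w^S_{t_1,t_2}(s,t_2+u)=1)$.

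The main (and essentially only) delicate point is the cluster of exact identities $f_h\circ\rho=1-f_h$, $\Pi\mathbf x^\delta_{t_1,t_2}=\mathbf 1-\mathbf x^\delta_{t_1,t_2}$ and $\mathbf x^\delta_{t_1,t_2}(0,0)=1/2$: all of these hold exactly precisely when the center pixel sits exactly on the edge, $t_2=1/2$. For $(t_1,t_2)$ in the thin band $G_{\delta/n}$ about the edge one has $|t_2-1/2|=O(\delta/n)$, so the discrepancy perturbs only $O(n)$ of the $n^2$ subregion means of $\mathbf x^\delta_{t_1,t_2}$ by $O(1)$, which shifts $\tilde d^2_\delta$ by $O(1/n)=o(1)$; this vanishing error is harmless for the $\Omega(\sigma)$ lower bound being derived (alternatively one simply fixes $h$ so that $t_2=1/2$). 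As in Lemma~\ref{lem:sym1}, the hypothesis $|s-t_1|>\delta/2$, together with the standing convention that boundary pixels are excluded, guarantees that the window around $(s,t_2\pm u)$ is an unclipped translate of the standard $\delta$-window, so that $\rho$ acts on it as a genuine congruence and the covariance computation applies verbatim. Note that no concentration inequality is needed here; Lemma~\ref{thm:chisq_conc} enters only afterward, when these probabilities are converted into the bias estimate.
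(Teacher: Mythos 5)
Your argument is correct and is exactly the symmetry argument the paper has in mind --- the paper itself skips the proof, asserting only that the lemma is ``a straightforward application of symmetry.'' Your write-up is in fact more careful than the paper: the reflection identities $f_h\circ\rho=1-f_h$, $\Pi\mathbf{x}^\delta_{t_1,t_2}=\mathbf{1}-\mathbf{x}^\delta_{t_1,t_2}$ and $\mathbf{x}^\delta_{t_1,t_2}(0,0)=1/2$ are exact only when $t_2=1/2$, and you correctly flag that for a general point of $Q_{\delta/n}$ the stated equality holds only up to a vanishing error (or after restricting to $t_2=1/2$), a subtlety the paper glosses over.
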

\noindent This lemma is a straightforward application of symmetry, and hence we skip the proof.

 \begin{lem}\label{lem:appBoutsideweights}
 Suppose that $\delta$ and $\tau_{\sigma}$ satisfy A1--A4. Then we have
 \[
 \P\left(\int_{S \backslash Q_{\delta/2}} w(s_1,s_2)ds_1 ds_2 > \sigma^2 \right)= o(\sigma).
 \]
 \end{lem}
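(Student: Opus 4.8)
The plan is to exploit assumption A4 together with the geometry of the horizontal edge $h\equiv 1/2$. For the fixed near-edge point $(t_1,t_2)\in Q_{\delta/n}$, the noise-free neighborhood of \emph{every} pixel $(s_1,s_2)\in S\backslash Q_{\delta/2}$ is a constant array, whereas the noise-free neighborhood of $(t_1,t_2)$ is roughly half black and half white; hence the semi-oracle signal distance $\tilde d_\delta(f(t_1,t_2),f(s_1,s_2))$ exceeds $1/4$ uniformly, A4 forces each such weight to be nonzero only with probability $o(\sigma^3)$, and a Tonelli plus Markov argument finishes the proof.

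First I would record two elementary geometric facts. (i) If $(s_1,s_2)\in S\backslash Q_{\delta/2}$ then $|s_2-1/2|\ge\delta/2$, so the square $I_\delta(s_1,s_2)=[s_1-\delta/2,\,s_1+\delta/2]\times[s_2-\delta/2,\,s_2+\delta/2]$ meets the edge line $\{u=1/2\}$ in a Lebesgue-null set; therefore $f$ is a.e.\ constant on $I_\delta(s_1,s_2)$, so every coordinate of $\mathbf x^\delta_{s_1,s_2}$ equals $0$ or every coordinate equals $1$. (ii) If $(t_1,t_2)\in Q_{\delta/n}$ then $|t_2-1/2|<\delta/n$, so the edge splits $I_\delta(t_1,t_2)$ into a lower part of area $(\frac12\pm\frac1n)\delta^2$ and an upper part of area $(\frac12\pm\frac1n)\delta^2$. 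Passing to the $n\times n$ pixelation, every row of subregions lying strictly below the edge carries the value $1$ and every row lying strictly above carries the value $0$, with at most one mixed row; hence at least $(\frac12-o(1))n^2$ coordinates of $\mathbf x^\delta_{t_1,t_2}$ equal $1$ and at least $(\frac12-o(1))n^2$ equal $0$, the $o(1)$ being controlled by assumption A3 ($n\to\infty$). (The measure-$O(\delta)$ boundary layer where neighborhoods are truncated contributes negligibly and may be discarded.)

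Combining (i) and (ii), for every admissible $(s_1,s_2)\in S\backslash Q_{\delta/2}$ at least $(\frac12-o(1))n^2$ of the coordinates of $\mathbf x^\delta_{t_1,t_2}-\mathbf x^\delta_{s_1,s_2}$ have absolute value $1$, so
\[
\tilde d^2_\delta\bigl(f(t_1,t_2),f(s_1,s_2)\bigr)\ \ge\ \frac{1}{n^2-1}\Bigl(\bigl(\tfrac12-o(1)\bigr)n^2-1\Bigr),
\]
which tends to $1/2$ as $\sigma\to0$ (the subtracted center term is at most $1$). Thus, for all small enough $\sigma$, $\tilde d_\delta(f(t_1,t_2),f(s_1,s_2))>1/4$ \emph{uniformly} in $(s_1,s_2)\in S\backslash Q_{\delta/2}$, and A4 gives $\P\bigl(w(s_1,s_2)=1\bigr)=o(\sigma^3)$ uniformly over this set. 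By Tonelli's theorem,
\[
\E\!\left[\int_{S\backslash Q_{\delta/2}} w(s_1,s_2)\,ds_1\,ds_2\right]=\int_{S\backslash Q_{\delta/2}}\P\bigl(w(s_1,s_2)=1\bigr)\,ds_1\,ds_2\ \le\ \lambda(S)\,o(\sigma^3)=o(\sigma^3),
\]
and Markov's inequality yields $\P\bigl(\int_{S\backslash Q_{\delta/2}} w\,ds_1\,ds_2>\sigma^2\bigr)\le o(\sigma^3)/\sigma^2=o(\sigma)$, as claimed.

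I expect the only delicate point to be step (ii): one must check that a fixed positive fraction of the pixelated coordinates of $(t_1,t_2)$'s neighborhood lie strictly on each side of the edge, which is exactly where A3 and the fact that $Q_{\delta/n}$ is a factor $n$ thinner than $Q_{\delta/2}$ enter; everything else is bookkeeping. I would also remark that the symmetry Lemmas \ref{lem:sym1}--\ref{lem:sym2} play no role here --- they are needed only in the subsequent stage, where the weight mass that survives \emph{inside} $Q_{\delta/2}$ is used to pin the bias of SNLM at $\Theta(1)$.
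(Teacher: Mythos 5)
Your proposal is correct and follows essentially the same route as the paper: show that the noise-free neighborhood distance between a point of $Q_{\delta/n}$ (which straddles the edge) and any point of $S\backslash Q_{\delta/2}$ (whose neighborhood is constant) exceeds $1/4$, invoke A4 to get $\P(w=1)=o(\sigma^3)$ pointwise, then integrate and apply Markov's inequality. The only difference is cosmetic --- you work out the half-black/half-white pixel count explicitly where the paper simply asserts that at least a quarter of the pixel values differ --- and your closing remark about the role of the symmetry lemmas is accurate.
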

 \begin{proof}
 Define
 \[
 B = \{(s_1,s_2) \in S \backslash Q_{\delta/2} \ | \ w(s_1,s_2)=1 \}
 \]
and the event
 \begin{eqnarray}
 \mathcal{G} = \{ \lambda(B) \geq \sigma^2\}. \nonumber
 \end{eqnarray}
We have
\begin{eqnarray}\label{eq:appbexpec}
\E(\lambda(B)) &=& \E \int \int_{S \backslash Q_{\delta/2}} I((u,v) \in B) du dv \nonumber \\
&=& \int_{S \backslash Q_{\delta/2}} \P((u,v) \in B) du dv \overset{(i)}{=} o(\sigma^3)
\end{eqnarray}
Equality (i) is due to Assumption (A4); at least quarter of the pixel values in the isotropic neighborhood of any $(u,v) \in S \backslash Q_{\delta/2}$ are different from the corresponding pixel values of the neighborhood around $(t_1,t_2) \in Q_{\delta/n}$. Using the Markov Inequality it is straightforward to show that
 \begin{eqnarray}
 \P(\mathcal{G}) &=& \P\left(\int_{S \backslash Q_{\delta/2}} w(u,v) du dv > \sigma^2\right) \leq \frac{\E \left(\int_{S \backslash Q_{\delta/2}} w(u,v) du dv\right)}{\sigma^2}  \nonumber \\
 &=& \frac{\E \left(\int_{S \backslash Q_{\delta/2}} I((u,v) \in B) du dv\right)}{\sigma^2} = o(\sigma),
 \end{eqnarray}
 where the last equality is the result of \eqref{eq:appbexpec}.
Finally, if $\lambda(B)< \sigma^2$, then
 \[
 \int_{S \backslash Q_{\delta/2}} w(u,v) dudv = O(\sigma^2).
 \]
 This completes the proof.\end{proof}

 Our next step toward the proof of a lower bound for the risk of NLM algorithm is to show that the bias of the pixels that are in $Q_{\delta/n}$ is $\Theta(1)$. To achieve this goal we show that if $(t_1,t_2) \ Q_{\delta/n}$ is above the edge and $(s_1,s_2)$ is below the edge then there is a non-vanishing chance (not converging to zero as $\sigma \rightarrow 0$)  that $(s_1,s_2)$ may contribute to the estimate of $(t_1,t_2)$. The following theorem is a formal presentation of this claim. 
 
 \begin{prop}\label{prop:weightprob}
 Let $(t_1,t_2), (s_1,s_2) \in Q_{\delta/n}$. Then there exists $p_0 >0$ independent of $\sigma$ such that for every $t_n, \delta$ we have 
 \[
 \P(w^n_{t_1,t_2}(u,v) = 1) > p_0.
 \]
 \end{prop}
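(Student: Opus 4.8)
By definition the event $\{w^S_{t_1,t_2}(s_1,s_2)=1\}$ coincides with $\{\tilde d^2_\delta(dY(t_1,t_2),dY(s_1,s_2))\le \rho^2+\tau_\sigma\}$, where I abbreviate $\rho^2=n^2\sigma^2/\delta^2$ for the per-pixel noise variance; note $\rho^2=O(1)$ by Assumption A2. Using Lemmas \ref{lem:sym1} and \ref{lem:sym2} I may place $(s_1,s_2)$ anywhere convenient inside $Q_{\delta/n}$ without altering this probability, so I take $(t_1,t_2)$ above and $(s_1,s_2)$ below the edge. Writing $a_{j_1,j_2}=\mathbf x^{\delta}_{t_1,t_2}(j_1,j_2)-\mathbf x^{\delta}_{s_1,s_2}(j_1,j_2)$ for the deterministic signal difference and letting $z_{j_1,j_2}$ be the patch-noise contributions around $(s_1,s_2)$, which are i.i.d.\ $N(0,\rho^2)$ over $(j_1,j_2)$, the SNLM distance decomposes as
\[
\tilde d^2_\delta=\frac{1}{n^2-1}\Big(\|a\|_2^2-2\langle a,z\rangle+\|z\|_2^2-(a_{0,0}-z_{0,0})^2\Big).
\]

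The geometric heart of the argument is that these two patches are nearly identical. Since $h\equiv 1/2$ is a horizontal line and both $t_2$ and $s_2$ lie within $\delta/n$ of $1/2$, the two patches are a vertical translate of one another by $|t_2-s_2|<2\delta/n$, i.e.\ by fewer than two pixel heights; consequently every sub-square lying entirely above, or entirely below, the edge contributes $a_{j_1,j_2}=0$, and only the $O(1)$ grid-rows straddling the edge disagree, with $|a_{j_1,j_2}|\le 1$ there. Hence $\|a\|_2^2=O(n)$ and $\|a\|_2^2/(n^2-1)=O(1/n)\to 0$ by Assumption A3; in particular the signal distance is negligible relative to the noise level $\rho^2$.

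Next I would apply the chi-square concentration of Lemma \ref{thm:chisq_conc} (together with the CLT, legitimate because $n\to\infty$) to $\|z\|_2^2=\rho^2\chi^2_{n^2}$: for a suitable constant $c$ the event $\mathcal A=\{\|z\|_2^2/(n^2-1)\le\rho^2-c/n\}$ has probability bounded below by a positive universal constant, since the required relative deviation is of order $1/(n\rho^2)$ and, multiplied by $\sqrt{n^2}$, yields $\Theta(1/\rho^2)=\Theta(1)$. The cross term is lower order, $2\langle a,z\rangle/(n^2-1)$ being $N\!\big(0,\,4\rho^2\|a\|_2^2/(n^2-1)^2\big)$ with standard deviation $O(\rho\sqrt n/n^2)=o(1/n)$, and the removed center term is $O_p(1/n^2)$; both are dominated by the $\Theta(1/n)$ scale that controls the comparison. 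Thus on $\mathcal A$, off a vanishing bad event, $\tilde d^2_\delta\le \rho^2-c/n+O(1/n)+o_p(1/n)\le\rho^2\le\rho^2+\tau_\sigma$ once $c$ exceeds the implied constant in $\|a\|_2^2/(n^2-1)\le c_1/n$, which gives $\P(w^S_{t_1,t_2}(s_1,s_2)=1)\ge p_0$ for a universal $p_0>0$. The complementary case $\tau_\sigma\ge \|a\|_2^2/(n^2-1)$ is easier: then $\{\|z\|_2^2\le\rho^2(n^2-1)\}$ already forces $w=1$, and this has probability tending to $1/2$.

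The step I expect to be the real obstacle is making everything uniform over all admissible $(\delta,\tau_\sigma)$: the whole mechanism hinges on the signal scale $\|a\|_2^2/n^2=\Theta(1/n)$ being small \emph{relative to} the noise fluctuation scale $\rho^2/n$, i.e.\ on $\rho^2=n^2\sigma^2/\delta^2$ being bounded below as well as above. Assumption A2 supplies the upper bound; the lower bound --- equivalently, that we are in the non-degenerate regime relevant to NLM, where, e.g., $\rho^2=1/8$ under the parameters of Corollary \ref{cor:nlm} --- together with the careful counting $\|a\|_2^2=O(n)$ and the handling of boundary/patch-overlap pixels is where the genuine work concentrates.
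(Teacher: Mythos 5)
Your proof takes essentially the same route as the paper's: decompose $\tilde d^2_\delta$ into the deterministic offset $\|a\|_2^2/(n^2-1)=\Theta(1/n)$ plus centered noise terms, and use a central-limit/Berry--Esseen argument to show that the noise fluctuation, of scale $\rho^2/n$ with $\rho^2=n^2\sigma^2/\delta^2$, dips below $-\Theta(1/n)$ with probability bounded away from zero. Your closing observation is also on target: this step needs $\rho^2$ bounded \emph{below}, whereas Assumption A2 (which the paper cites at this point) only supplies the upper bound $\rho^2=O(1)$, so the paper's own proof quietly relies on the same unstated lower bound that you flag.
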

\begin{proof} We need to demonstrate that regions above and below the edge contour are included in the NLM algorithm with a constant non-zero probability $p_{0}$. First, we use the definition of the weights of the NLM algorithm to determine the probability that $w(u,v) = 1$
\begin{eqnarray}
\lefteqn{ \P(w^n_{t_1,t_2}(s_1,s_2) = 1)  = \P \left(\tilde{d}^2_{\delta}(dY(t_1,t_2), dY(s_1,s_2) < \frac{n^2 \sigma^2}{\delta^2} + \tau_{\sigma}\right)} \nonumber \\
&=& \P\left(\frac{1}{n^2-1} \sum\left(s_{\ell,p}^2 - \frac{n^2\sigma^2}{\delta^2}\right) - \frac{1}{n^2-1} \sum s_{\ell,0} \leq -\frac{1}{n}+ \tau_{\sigma}\right) \nonumber\\
&\geq& \P\left(\frac{1}{n^2-1} \sum\left(s_{\ell,p}^2 - \frac{n^2\sigma^2}{\delta^2}\right) - \frac{1}{n^2-1} \sum s_{\ell,0} \leq -\frac{1}{n}\right), \nonumber
\end{eqnarray}
where $s_{\ell,p} = \mathbf{z}_{s_1,s_2}^{0, \delta, \delta}(\ell, p)$. Using the Berry-Esseen Central Limit Theorem for independent non-identically
distributed random variables \cite{Stein86}, we can easily bound this probability away from zero. For more details, see Proposition 1 in \cite{MaNaBa11}. Note that according to Assumption A2 we have $\frac{n^2 \sigma^2}{\delta^2} = O(1)$. 
\end{proof}

We now consider the weights for the regions above and below the edge contour separately. Therefore, we define
\begin{eqnarray}
Q^1_{\Delta} &\triangleq& \{(t_1,t_2) \ | \ 1/2<t_2< 1/2+ \Delta/4 \}, \nonumber \\
Q^2_{\Delta} &\triangleq& \{(t_1,t_2) \ | \ 1/2- \Delta/4<t_2< 1/2 \}, \nonumber \\
\Omega^1_{\Delta,\delta} &\triangleq&  \{(t_1,t_2) \ | \ 1/2<t_2< 1/2+ \Delta/4,\  0<t_1< 2\delta \}, \nonumber \\
\Omega^2_{\Delta,\delta} &\triangleq& \{(t_1,t_2) \ | \ 1/2- \Delta/4<t_2< 1/2,\  0<t_1< 2\delta \}. \nonumber 
\end{eqnarray}
Let
\[
p_{u, \sigma} \triangleq \P(w^n_{(t_1,t_2)}(u,v) = 1). 
\]
Note that according to Lemma \ref{lem:sym1} this probability does not depend on $v$ and hence we have used the notation $p_{u, \sigma}$. 

%

\begin{lem}\label{lem:appBinsidepixels}
If $w(u,v)$ denotes the weights used in the NLM algorithm, then we have
\begin{eqnarray}
\P\left( \left|\int_{Q^1_\Delta} w(u,v) du dv - \int_{Q^1_\Delta}p_{u, \sigma} du\right| >2\frac{\epsilon}{\delta}+ \sqrt{\frac{{|\log(\sigma)|}}{\delta}} \Delta \delta \right)&=& O\left(\frac{\sigma^4}{\epsilon}\right), \nonumber \\
\P\left( \left|\int_{Q^2_\Delta} w(u,v) du dv - \int_{Q^2_\Delta}p_{u, \sigma} du\right| > 2\frac{\epsilon}{\delta}+ \sqrt{\frac{{|\log(\sigma)|}}{\delta}} \Delta \delta \right)&=& O\left(\frac{\sigma^4}{\epsilon}\right). \nonumber
\end{eqnarray}
\end{lem}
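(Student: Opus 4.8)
The plan is to read $\int_{Q^1_\Delta} w(u,v)\,du\,dv$ as a random variable, observe that its mean is exactly $\int_{Q^1_\Delta}\P(w_{t_1,t_2}(u,v)=1)\,du\,dv=\int_{Q^1_\Delta}p_{u,\sigma}\,du\,dv$ (the object written $\int_{Q^1_\Delta}p_{u,\sigma}\,du$ in the statement, since by Lemma \ref{lem:sym1} $p_{u,\sigma}$ does not depend on $v$), and then prove a concentration estimate for $\int_{Q^1_\Delta}(w-p_{u,\sigma})$. The two terms on the right-hand side come from two separate mechanisms: a Hoeffding/Bernstein bound over $\asymp 1/\delta$ essentially independent blocks, producing the fluctuation $\sqrt{|\log\sigma|/\delta}\,\Delta\delta$ with an exceptional probability equal to a large power of $\sigma$; and a Markov argument of exactly the type used in Lemma \ref{lem:lebesgue}, producing the $2\epsilon/\delta$ slack with probability $O(\sigma^4/\epsilon)$.

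The first ingredient is the independence structure. In the (semi-oracle) weight the central neighbourhood enters only through the deterministic vector $\mathbf{x}^{\delta}_{t_1,t_2}$, so $w_{t_1,t_2}(u,v)$ is a measurable function of the Wiener sheet restricted to $I_\delta(u,v)$ alone; for the plain NLM weight the same holds after conditioning on the noise in $I_\delta(t_1,t_2)$, which by Lemma \ref{thm:chisq_conc} is anomalous only with exponentially small probability. Hence $w(u,v)$ and $w(u',v')$ are independent whenever $|u-u'|>\delta$. I would then partition the horizontal extent of $Q^1_\Delta$ into $M\asymp 1/\delta$ consecutive cells $C_k$ of width $\delta$ and set $X_k$ equal to the integral of $w$ over the part of $Q^1_\Delta$ with $u\in C_k$, so $X=\sum_k X_k$ with $0\le X_k\le \delta(\Delta/4)$. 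Cells a fixed number of indices apart are independent, so splitting the index set into $O(1)$ residue classes writes $X$ as a sum of $O(1)$ partial sums of independent, bounded, nonnegative variables, and I would apply Hoeffding's inequality (or Bernstein's, using $\sum_k\mathrm{Var}(X_k)\le(\delta\Delta/4)\sum_k\E X_k\le(\delta\Delta/4)(\Delta/4)$ from $w^2=w$ and Cauchy--Schwarz) to each partial sum at deviation level $\asymp\sqrt{|\log\sigma|/\delta}\,\Delta\delta$. Since $\sum_k(\mathrm{range})^2\asymp\delta\Delta^2$, this deviation level corresponds to an exponent $\Omega(|\log\sigma|)$, hence to an exceptional probability $O(\sigma^{c})$ with $c$ as large as we like; in particular it is $O(\sigma^4)$, which is absorbed into the claimed $O(\sigma^4/\epsilon)$.

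There remains a residual exceptional set: the points of $Q^1_\Delta$ at which $w(u,v)$ differs from the value predicted by the chi-square concentration of its neighbourhood distance (and, in the NLM case, the event that the central neighbourhood is anomalous). I would bound its expected Lebesgue measure by a suitably small power of $\sigma$ using Lemma \ref{thm:chisq_conc} pointwise together with Fubini, exactly as $\lambda(P_1)-\lambda(A_1)$ was handled in Lemma \ref{lem:lebesgue}; Markov's inequality then gives that this set has measure at most $\epsilon/\delta$ except with probability $O(\sigma^4/\epsilon)$, and on the complement its contribution to $\int_{Q^1_\Delta}(w-p_{u,\sigma})$ is at most $2\epsilon/\delta$, which is the second term. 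The bound for $Q^2_\Delta$ is identical after reflecting across the line $t_2=1/2$ and invoking Lemma \ref{lem:sym2}.

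The main obstacle is the dependence among the weights: making the block decomposition rigorous (in particular the block straddling $u=t_1$, where for the plain NLM weight the conditioning on the central neighbourhood interacts with the blocks), and checking that the cell ranges and variances are small enough that the exponential inequality yields both the correct fluctuation scale $\sqrt{|\log\sigma|/\delta}\,\Delta\delta$ and a power of $\sigma$ at least $4$, while cleanly separating which exceptional sets must be paid for with the Markov factor $1/\epsilon$ and which are exponentially small.
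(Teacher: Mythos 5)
Your argument is correct, but it takes a genuinely different route from the paper's. The paper periodizes pointwise: it writes $\int_{Q^1_\Delta} w = \int_{\Omega^1_\Delta}\sum_{k} w(u+2k\delta,v)$, applies Hoeffding for each fixed base point to the sum of $1/(2\delta)$ independent translates (deviation $t=\sqrt{|\log\sigma|/\delta}$, failure probability $2e^{-2\delta t^2}=O(\sigma^4)$), and then needs a second, Markov step on the Lebesgue measure of base points where this pointwise concentration fails --- that second step is precisely where both the $2\epsilon/\delta$ slack and the $1/\epsilon$ in the probability come from. You instead block the integral itself into $\asymp 1/\delta$ width-$\delta$ cells and apply Hoeffding directly to the (conditionally independent, after splitting into $O(1)$ residue classes) cell integrals $X_k\in[0,\delta\Delta/4]$; your exponent computation $s^2/\sum_k(\mathrm{range})^2\asymp|\log\sigma|$ is right and yields failure probability $O(\sigma^{c})$ with $c$ arbitrarily large, with no $\epsilon$ at all. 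This is cleaner and in fact proves a strictly stronger statement, since $\E\int_{Q^1_\Delta}w=\int_{Q^1_\Delta}p_{u,\sigma}$ exactly by Fubini and Lemma \ref{lem:sym1} (up to the width-$\delta$ strip around $t_1$, whose contribution $O(\Delta\delta)$ is dominated by the fluctuation term). Two small remarks: your diagnosis of the $2\epsilon/\delta$ term as compensating for a ``chi-square exceptional set'' misidentifies its origin --- $p_{u,\sigma}$ is the exact mean of $w(u,v)$, not an approximation obtained from Lemma \ref{thm:chisq_conc}, so that residual step is vacuous (harmless, since the target bound is weaker than what you prove). And your worry about conditioning on the central neighborhood is moot here: the lemma is invoked for the semi-oracle weights $w^S$, whose reference patch $\mathbf{x}^\delta_{t_1,t_2}$ is deterministic, which is exactly why the paper can treat horizontally separated weights as unconditionally independent.
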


\begin{proof} The weights $w(u,v)$ over the pixelated neighborhoods can be rewritten in the following way
\begin{eqnarray}
\int_{Q^1_\Delta} w(u,v) du dv = \int_{\Omega^1_\Delta} \sum_{k=1}^{\frac{1}{2\delta}} w(u+ 2k \delta,v) du dv. \nonumber
\end{eqnarray}
Applying the Hoeffding inequality, it is straightforward to confirm that
\begin{eqnarray}\label{eq:appbhoeff}
\P\left( \left|\sum_{k=1}^{\frac{1}{2\delta}} w(u+ 2k \delta,v)- \frac{p_{u,\sigma}}{2 \delta}\right| > t  \right) \leq 2 {\rm e}^{- 2\delta t^2}.
\end{eqnarray}
Now, defining 
\[
\Gamma^1_\Delta \triangleq  \left\{(u,v)\in \Omega^1_{\Delta} \ \left| \  \Big| \sum_{k=1}^{\frac{1}{2\delta}} w(u+ 2k \delta,v)- \frac{p_{u,\sigma}}{2 \delta}\Big|  < t  \right. \right\},
\]
we see that
\begin{eqnarray*}
\P(\lambda(\Omega^1_{\Delta}) - \lambda(\Gamma^1_{\Delta})> \epsilon) &=& \P \left(\int_{(u,v) \in \Omega^1_{\Delta}} (1- \mathds{I}((u,v) \in \Gamma^1_\Delta)dudv > \epsilon \right) \\
&\overset{(ii)}{\leq}& \frac{ \left( \E \int_{(u,v) \in \Omega^1_{\Delta}} (1- \mathds{I}((u,v) \in \Gamma^1_\Delta)dudv \right)}{\epsilon} \\
&= &  \frac{ \left(  \int_{(u,v) \in \Omega^1_{\Delta}} (1- \P((u,v) \in \Gamma^1_\Delta)dudv \right)}{\epsilon} \\
&\overset{(iii)}{\leq}& \frac{2 {\rm e}^{- 2 \delta t^2}}{\epsilon},
\end{eqnarray*}
where (ii) is due to Markov Inequality and Inequality (iii) is due to \eqref{eq:appbhoeff}. Furthermore,
\begin{eqnarray}\label{eq:appboneterm}
 \lefteqn{\left| \int_{\Gamma^1_\Delta} \sum_{k=1}^{\frac{1}{2\delta}} w(u+ 2k \delta,v) du dv - \int _{\Gamma^1_{\Delta}} \frac{p(u, \sigma)}{2 \delta} du dv \right| } \nonumber \\
&\leq& \int_{\Gamma^1_\Delta} \left| \sum_{k=1}^{\frac{1}{2\delta}} w(u+ 2k \delta,v) - \frac{p(u, \sigma)}{2 \delta}\right| du dv \nonumber \\
& \overset{\rm (iv)}{\leq}& t \lambda(\Gamma^1_\Delta) \leq  t \lambda(\Omega^1_\Delta) \leq t \Delta \delta, 
\end{eqnarray}
where Inequality (iv) is due to the definition of $\Gamma^1_{\Delta}$. Define the event 
\[
\mathcal{F} \triangleq \{\lambda(\Omega^2_{\Delta}) - \lambda(\Gamma^2_\Delta)< \epsilon\}.
\]
Given $\mathcal{F}$ holds, we conclude $\lambda(\Omega^2_{\Delta}) - \lambda(\Gamma^2_\Delta)< \epsilon$ and hence
\begin{eqnarray}
\lefteqn{\left|\int_{Q^1_\Delta} w(u,v) du dv- \int_{Q^1_\Delta} p(u,\sigma) du dv \right|} \nonumber \\
&\leq&\left| \int_{Q^1_\Delta} \sum_{k=1}^{\frac{1}{2\delta}} w(u+ 2k \delta,v) du dv - \int _{Q^1_{\Delta}} \frac{p(u, \sigma)}{2 \delta} du dv \right|  \nonumber \\
&\leq & \left|  \int_{Q^1_\Delta} \sum_{k=1}^{\frac{1}{2\delta}} w(u+ 2k \delta,v) du dv-  \int_{\Gamma^1_\Delta} \sum_{k=1}^{\frac{1}{2\delta}} w(u+ 2k \delta,v) du dv \right|     \nonumber \\
&& + \left|\int_{\Gamma^1_\Delta} \sum_{k=1}^{\frac{1}{2\delta}} w(u+2k \delta,v) du dv -  \int_{\Gamma^1_\Delta} \frac{p(u,\sigma)}{2 \delta} du dv \right|  \nonumber \\
&&+ \left| \int_{\Gamma^1_\Delta} \frac{p(u,\sigma)}{\delta} du dv - \int_{Q^1_\Delta} \frac{p(u,\sigma)}{\delta} du dv \right| \nonumber \\
&\overset {(\rm v)}{\leq}& 2 \frac{\epsilon}{\delta} + t\Delta\delta,  \nonumber
\end{eqnarray}
where Inequality (v) is due to \eqref{eq:appboneterm} and the fact that we have assumed $\mathcal{F}$ holds.
Setting $t = \sqrt{\frac{|\log \sigma|}{\delta}}$ completes the proof.
\end{proof}

\begin{thm}
Suppose that $\delta$, $\tau_{\sigma}$ and $n$ satisfy Assumptions A1--A4. Then the risk of SNLM is
\[
\inf_{\delta_n, t_n} \sup_{f \in H^{\alpha}(C)} R(f, \hat{f}^S) = \Omega(\sigma).
\]
\end{thm}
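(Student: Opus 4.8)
The plan is to exhibit one hard function and show that, for \emph{every} admissible triple $(\delta,\tau_\sigma,n)$ (i.e.\ satisfying A1--A4), the bias of SNLM on a thin band straddling the edge is bounded below by a universal constant, while that band has area $\Omega(\sigma)$. Take $h\equiv1/2$, so $f_h\in H^\alpha(C)$ (a constant contour lies in every H\"older ball); recall $f_h=1$ below the line $t_2=1/2$ and $f_h=0$ above it. Restricting the risk integral to $Q_{\delta/n}$ and discarding the variance term,
\[
R(f_h,\hat f^S)\;\ge\;\int_{Q_{\delta/n}}\bigl(f_h(t_1,t_2)-\E_{f_h}\hat f^S(t_1,t_2)\bigr)^2\,dt_1\,dt_2 .
\]
Since $f_h$ equals $0$ or $1$ on $Q_{\delta/n}$, it suffices to produce a universal $c_0>0$ with $\E_{f_h}\hat f^S(t_1,t_2)\in[c_0,1-c_0]$ for all $(t_1,t_2)\in Q_{\delta/n}$ (we clip $\hat f^S$ to $[0,1]$ as in the proof of Theorem~\ref{thm:oracleANLM}, which only helps): then the right-hand side is at least $c_0^2\,\lambda(Q_{\delta/n})=2c_0^2\,\delta/n$, and $\delta/n=\Omega(\sigma)$ by A2, uniformly in the parameters.

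Fix $(t_1,t_2)\in Q_{\delta/n}$ and split the numerator and denominator of $\hat f^S$ over $Q_{\delta/2}$ --- the pixels whose isotropic neighbourhood meets the edge --- and its complement. On $S\setminus Q_{\delta/2}$ a neighbourhood is all-white or all-black, hence its $\tilde d_\delta$-distance to the (roughly half-white, half-black) neighbourhood of $(t_1,t_2)$ exceeds $1/4$; by A4 and Markov's inequality, which is exactly Lemma~\ref{lem:appBoutsideweights}, the weight mass from $S\setminus Q_{\delta/2}$ is $O(\sigma^2)$ off an event of probability $o(\sigma)$, and the same bound controls $\int_{S\setminus Q_{\delta/2}}wX$. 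Next, since the centre cell was removed from $\tilde d_\delta$, the weight $w(s_1,s_2)$ is independent of the centre noise $Z(s_1,s_2)$, so $\E[w(s_1,s_2)Z(s_1,s_2)]=0$; a routine second-moment estimate --- using $n^2\sigma^2/\delta^2=O(1)$, which is A2 --- gives $\E\bigl(\int_{Q_{\delta/2}}wZ\bigr)^2=O\!\bigl(\delta\,(\delta/n)^2\bigr)$, so the noise contribution to the numerator is of smaller order than the in-band weight mass $D_{\mathrm{in}}:=\int_{Q_{\delta/2}}w$ (which we will see is $\Omega(\delta/n)$). Thus, off an event of probability $o(1)$, $\E_{f_h}\hat f^S(t_1,t_2)$ equals, up to a $o(1)$ additive error, the ratio $\int_{Q_{\delta/2}}w f_h\big/\int_{Q_{\delta/2}}w$ --- the fraction of $D_{\mathrm{in}}$ lying below the edge.

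The remaining work is to pin this fraction away from $0$ and $1$, and this is where the choice $h\equiv1/2$ is essential. Reflecting in the line $t_2=1/2$ carries $f_h$ to $1-f_h$, which leaves the law of $\|\mathbf x_{t_1,t_2}^\delta-\mathbf y_{s_1,s_2}^\delta\|_2^2$ --- hence of each weight --- unchanged; combined with horizontal translation invariance, these are Lemmas~\ref{lem:sym1} and~\ref{lem:sym2}, one gets that the weight-one probability is symmetric about the edge for $(t_1,t_2)$ on the edge, with only an $O(\delta/n)$ perturbation when $(t_1,t_2)$ lies within $\delta/n$ of it. Hence, writing $Q_{\delta/2}^{\pm}$ for the parts of $Q_{\delta/2}$ below/above the edge, $\E\int_{Q_{\delta/2}^{-}}w=\E\int_{Q_{\delta/2}^{+}}w$ up to lower order, and Proposition~\ref{prop:weightprob} shows the common value is at least $p_0\,\lambda(Q_{\delta/n})=\Omega(\delta/n)$, which dominates the $O(\sigma^2)$ outside mass. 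A Hoeffding bound over the $\Theta(1/\delta)$ horizontal translates of a fundamental cell --- Lemma~\ref{lem:appBinsidepixels} --- upgrades these expectations to concentration, so off an event of probability $o(1)$ both $\int_{Q_{\delta/2}^{-}}w$ and $\int_{Q_{\delta/2}^{+}}w$ lie between fixed positive multiples of $D_{\mathrm{in}}$. Assembling the three pieces forces $\E_{f_h}\hat f^S(t_1,t_2)$ into a compact subinterval $[c_0,1-c_0]$ of $(0,1)$, and the proof concludes as in the first paragraph. The main obstacle is precisely this last step: because $\tau_\sigma$ is arbitrary, $D_{\mathrm{in}}$ can range from $\Theta(\delta/n)$ for small thresholds up to $\Theta(\delta)$ for very large ones, and one must show the above/below split stays bounded away from the endpoints across that entire range --- which is exactly what the exact reflection symmetry of the hard instance guarantees, and why a generic contour $h$ would not suffice.
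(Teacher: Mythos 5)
Your proposal is correct and follows essentially the same route as the paper's proof in \ref{app:nlm}: the flat-edge hard instance $h\equiv 1/2$, reduction to the squared bias on the band $Q_{\delta/n}$ of measure $\Omega(\delta/n)=\Omega(\sigma)$, control of the out-of-band weight mass via A4 and Markov, the translation/reflection symmetry lemmas together with the Berry--Esseen lower bound $p_0$ on the weight-one probability, and Hoeffding concentration over horizontal translates to pin $\E\hat f^S$ away from $\{0,1\}$. Your closing observation about why exact reflection symmetry is needed to handle all admissible thresholds is a nice articulation of the same mechanism the paper relies on, not a departure from it.
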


\begin{proof}
Let $(t_1,t_2) \in \Omega^2_{\delta/n}$. Consider the following two definitions from the proofs of Lemma \ref{lem:appBoutsideweights} and Lemma \ref{lem:appBinsidepixels}:

 \begin{eqnarray}
 \mathcal{G} \triangleq \{ \lambda(B) \geq \sigma^2\}, \nonumber
\end{eqnarray}
where
\[
 B = \{(s_1,s_2) \in S \backslash Q_{\delta/2} \ | \ w(s_1,s_2)=1 \},
 \]
 and 
 \[
\mathcal{F} \triangleq \{\lambda(\Omega^2_{\delta/2}) - \lambda(\Gamma^2_{\delta/2})< \sigma^2\}.
\]
Note that according to Lemmas \ref{lem:appBoutsideweights} and \ref{lem:appBinsidepixels}
\[
\P(\mathcal{F}^c \cup \mathcal{G}^c) = o(\sigma).
\]
We will calculate a lower bound for the risk of NLM. We have
\[
\E \left(f(t_1,t_2) - \frac{\int_S w(u,v) X(u,v)dudv}{\int_Sw(u,v)dudv}\right)^2 \geq \left( \E\left(\frac{\int_S w(u,v) X(u,v)dudv}{\int_Sw(u,v)dudv}\right)\right)^2.
\]
which leads us to towards the lower bound
\begin{eqnarray} \label{eq:lastone}
\lefteqn{\E\left(\frac{\int_S w(u,v) X(u,v)dudv}{\int_Sw(u,v)dudv}\right)  \geq  \E\left(\frac{\int_S w(u,v) X(u,v)dudv}{\int_{Q_{\delta/2}} w(u,v)dudv}\right)} \nonumber \\
& \geq&  \E\left(\frac{\int_S w(u,v) X(u,v)dudv}{\int_{Q_{\delta/2}}w(u,v)dudv} \  | \  \mathcal{F} \cap \mathcal{G}\right) \P(\mathcal{F} \cap \mathcal{G}) \nonumber \\
&\overset{\rm (vi)}{\geq}&  \E\left(\frac{\int_{S} w(u,v) X(u,v)dudv}{\int_{Q_{\delta/2}} p_{u, \sigma}dudv- 2 \frac{\sigma^2}{\delta} -|\log(\sigma)|\delta^{3/2}} \  | \  \mathcal{F} \cap \mathcal{G} \right) \P(\mathcal{F}\cap \mathcal{G}) \nonumber \\
&\geq& \E\left(\frac{\int_{S} w(u,v) X(u,v)dudv}{\int_{Q_{\delta/2}} p_{u, \sigma}dudv- 2 \frac{\sigma^2}{\delta} -|\log(\sigma)|\delta^{3/2}} \right)-  \P(\mathcal{F}^c \cup \mathcal{G}^c) \hspace{5cm} \nonumber \\
&= & \left(\frac{\int_{S} w(u,v) f(u,v)dudv}{\int_{Q_{\delta/2}} p_{u, \sigma}dudv- 2 \frac{\sigma^2}{\delta} -|\log(\sigma)|\delta^{3/2}} \right)-  \P(\mathcal{F}^c \cup \mathcal{G}^c) \nonumber \\
&\overset{\rm (vii)}{\leq} &\left(\frac{\int_{Q^1_{\delta/2}} w(u,v) dudv + \sigma^2}{\int_{Q_{\delta/2}} p_{u, \sigma}dudv- 2 \frac{\sigma^2}{\delta} -t\delta^2} \right)-  \P(\mathcal{F}^c \cup \mathcal{G}^c) \nonumber \\
&\overset{\rm (viii)}{\leq} &\left(\frac{\int_{Q^1_{\delta/2}} p_{u, \sigma} dudv+ 2 \frac{\sigma^2}{\delta} +|\log(\sigma)|\delta^{3/2} + \sigma^2}{\int_{Q_{\delta/2}} p_{u, \sigma}dudv- 2 \frac{\sigma^2}{\delta} -t\delta^2} \right)-  \P(\mathcal{F}^c \cup \mathcal{G}^c) \nonumber \\
&\geq& \E\left(\frac{\int_{Q^1_{\delta/2}} p_{u, \sigma} dudv+ 2 \frac{\sigma^2}{\delta} +|\log(\sigma)|\delta^{3/2} + \sigma^2}{\sigma+2\int_{Q^1_{\delta/2}} p_{u, \sigma}dudv- 2 \frac{\sigma^2}{\delta} -|\log(\sigma)|\delta^{3/2}} \right)-  \P(\mathcal{F}^c \cup \mathcal{G}^c). 
\end{eqnarray}
Inequality (vi) is an immediate consequence of Lemma \ref{lem:appBinsidepixels}. Inequality (vii) is due to Lemma \ref{lem:appBoutsideweights}. Finally Inequality (viii) is the result of Lemma \ref{lem:appBinsidepixels}.
The minimum of the last line is achieved when $\int_{Q^1_{\delta/4}} p_{u, \sigma} dudv$ is minimized. However, we have proved in Proposition \ref{prop:weightprob} that $p(u)> p_0$ for $(u,v) \in Q^1_{\delta/n}$.  Therefore, $\int_{Q^1_{\delta/4}} p_{u, \sigma} dudv$ is lower bounded by $\Theta(\frac{\delta}{n})$ which is equal to $\Omega(\sigma)$ according to Assumption $A2$. When we substitute this optimum value in the lower bound (\ref{eq:lastone}), we see that the risk over this region is $\Theta(1)$. Therefore the bias of the NLM over the entire image is $\Omega(\frac{\delta}{n})$ or, equivalently, $\Omega(\sigma)$ according to Assumption A2.
\end{proof}

\section{Proof of Theorem \ref{thm:minimax}}\label{app:proofminimax}
 Here we focus on the case of $\alpha =2$ and use the standard technique of hypercube construction to establish the lower bound \cite{CaDo02Illposed}. Let $\phi:[0,1] \rightarrow \mathds{R}^+ \cup \{0\}$ be a two-times differentiable function with $\|\frac{d^2 \phi}{dt^2} \|_{\infty}=1$, $\phi(0) = \phi(1) =0$, $\left. \frac{d\phi}{dt} \right|_0 = \left. \frac{d\phi}{dt} \right|_{1}=0$, and $\left. \frac{d^2\phi}{dt^2} \right|_0 = \left. \frac{d^2\phi}{dt^2} \right|_{1}=0$. For $ i = 0, 1, \ldots, m-1$ define
\begin{eqnarray*}
\phi_{i,m} \triangleq \! \! \left\{\begin{array}{rl}
C m^{-2} \phi(mt- i)  & \frac{i}{m} \leq t \leq \frac{i+1}{m} , \\ 
0  \hspace{2.5cm}           & {\rm otherwise} 
\end{array}\right.
\end{eqnarray*}

Consider the function $f_0: [0,1]^2 \rightarrow [0,1]$, $f_0 \triangleq \textbf{1}_{\{t_2 < 0.5\}}$, and define $\psi_{i,m} \triangleq \textbf{1}_{\{t_2 \leq \phi_{i,m}(t_1)+ 0.5 \}} - f_0$. Finally, define
\[
\mathcal{F}_m \triangleq \{f_0 + \sum_{i=1}^m \zeta_i \psi_{i,m}(t_1,t_2), \ \ \zeta_i \in \{0,1\}  \}.
\]
\begin{figure}
\centering{
  \includegraphics[width=6.2cm]{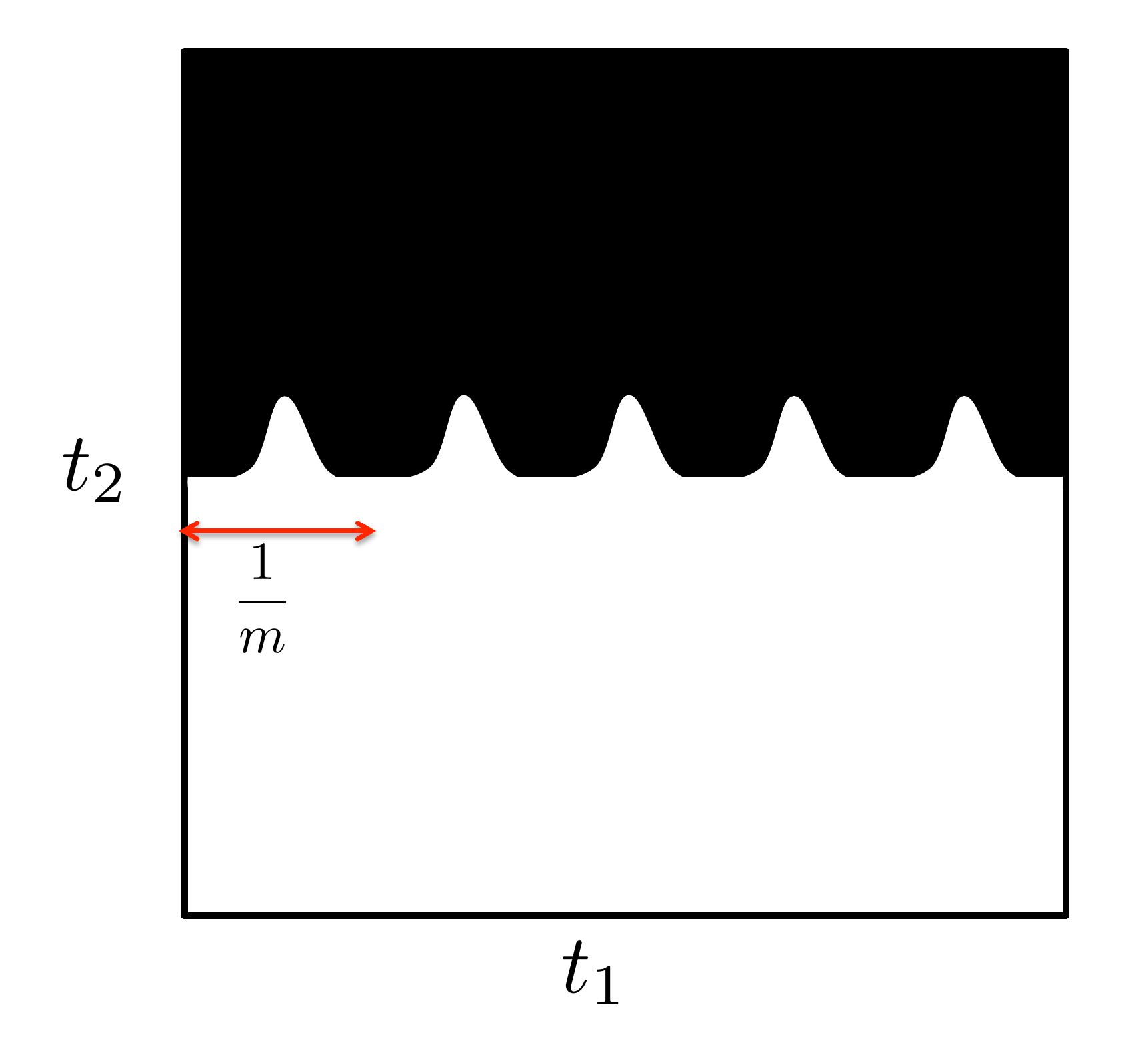} 
  \caption{A sample function from $\mathcal{F}_m$. In this figure, $\zeta_i= 1$ for $i =1, 2, \ldots, m-1$. }
   \label{fig:hypercube}
  }
\end{figure}
A sample function of this class is displayed in Figure \ref{fig:hypercube}. Since $\mathcal{F}_m \subset H^2(C)$, we have
\begin{equation} \label{eq:lower_minimax}
\inf_{\hat{f}} \sup_{H^2(C)} \E \|f - \hat{f}\|_2^2 \geq \inf_{\hat{f}} \sup_{\mathcal{F}_m} \E \|f - \hat{f}\|_2^2.
\end{equation}
The right hand side of \eqref{eq:lower_minimax} can be calculated more easily, since we can restrict our attention to the estimators of the form $f_0 + \sum_{i=1}^m \hat{\zeta}_i \psi_{i,m}$. This is due to the fact that if $P_{\mathcal{F}_m}$ is the projection onto $\mathcal{F}_m$, then for every $f \in \mathcal{F}_m$ we have
\[
\|P_{\mathcal{F}_m}(\hat{f}) - f\|_2^2 \leq \|P_{\mathcal{F}_m}(\hat{f}) - P_{\mathcal{F}_m}(f) \|_2^2 \leq \| \hat{f} - f\|_2^2. 
\] 
Furthermore for $f = f_0 + \sum_{i=1}^m \zeta_i \psi_{i,m}(t_1,t_2)$ and   $\hat{f} = f_0 + \sum_{i=1}^m \hat{\zeta}_i \psi_{i,m}(t_1,t_2)$ we have
\begin{equation}\label{eq:equivalence}
\|f- \hat{f}\|_2^2 = \kappa_m \|\zeta - \hat{\zeta}\|_2^2,
\end{equation}
where $\kappa_m = \|\psi_{i,m}(t_1,t_2)\|^2$ and satisfies
\[
\kappa_m = \|\psi_{i,m}(t_1,t_2)\|^2 = \int_{t_1} \int_{t_2}  \psi^2_{i,m}(t_1,t_2) =  \int_{t_1} \int_{t_2}  \psi_{i,m}(t_1,t_2) = \Theta(m^{-3}).
\]
According to \eqref{eq:equivalence} the original problem reduces to one of estimating $\zeta$. Therefore, we reduce the problem to the problem of estimating $\zeta_1, \ldots, \zeta_m$ from the observations $y^{\mathcal{F}}_1, \ldots, y^{\mathcal{F}}_m$ given by
\[
y^{\mathcal{F}}_i = \int \psi_{i,m}(t_1,t_2) dY(t_1,t_2) = \|\psi_{i,m}\|_2^2\zeta_i + \int \sum_i \psi_{i,m} dW(t_1,t_2).
\] 
We have
\[
y^{\mathcal{F}}_i = \kappa_m \zeta_i + w^{\mathcal{F}}_i,
\]
where $w^{\mathcal{F}}_i \overset{iid}{\sim} N(0, \sigma^2 \kappa_m)$. Consider the problem of estimating $\zeta_1$ from the observation $\zeta_1 + N(0, \sigma^2/\kappa_m)$ and set $m$ to the smallest integer for which $\sigma^2/\kappa_m \leq 1$. For this choice of $m$ the risk of any estimator is lower bounded by a constant $B$. Combining this with \eqref{eq:equivalence} we conclude that minimax risk defined in \eqref{eq:lower_minimax} is lower bounded by $B m \kappa_m $. It is straightforward to confirm that $m=\sigma^{-2/3}$ satisfies the condition $\sigma^2/\kappa_m \leq 1$. Therefore we finish the proof by setting $m=\sigma^{-2/3}$ and  $\kappa = \sigma^2$.

\bibliographystyle{model1b-num-names}
\bibliography{nonlocal,estimation,wedgelet}







\end{document}